\let\csname equation*\endcsname\relax
\let\csname endequation*\endcsname\relax
\newtheorem{theorem}{Theorem}[section]
\newtheorem{lemma}[theorem]{Lemma}
\newtheorem{remark}[theorem]{Remark}
\DeclareMathOperator*{\argmin}{argmin}
\begin{document}

\title[Projected Newton Method]{Projected Newton method for noise constrained $\ell_p$ regularization}

\author{J Cornelis \& W Vanroose}

\address{Applied Mathematics Group, Department of Mathematics, University of Antwerp, Middelheimlaan 1, 2020 Antwerp, Belgium}
\ead{jeffrey.cornelis@uantwerp.be}
\vspace{10pt}
\begin{indented}
\item[]\today
\end{indented}

\begin{abstract}
Choosing an appropriate regularization term is necessary to obtain a meaningful solution to an ill-posed linear inverse problem contaminated with measurement errors or noise. 
The $\ell_p$ norm covers a wide range of choices for the regularization term since its behavior critically depends on the choice of $p$ and since it can easily be combined with a suitable regularization matrix. 
We develop an efficient algorithm that simultaneously determines the regularization parameter and corresponding $\ell_p$ regularized solution such that the discrepancy principle is satisfied. We project the problem on a low-dimensional Generalized Krylov subspace and compute the Newton direction for this much smaller problem. We illustrate some interesting properties of the algorithm and compare its performance with other state-of-the-art approaches using a number of numerical experiments, with a special focus of the sparsity inducing $\ell_1$ norm and edge-preserving total variation regularization.  
\end{abstract}

\vspace{2pc}
\noindent{\it Keywords}: Newton's method, Generalized Krylov subspace, $\ell_p$ regularization, discrepancy principle, total variation

\section{Introduction}

In this manuscript we are concerned with the $\ell_p$ regularized linear inverse problem
\begin{equation}\label{eq:lp}
x_\alpha = \argmin_{x\in\mathbb{R}^n} \frac{1}{2} ||A x - b||_2^2 + \frac{\alpha}{p} ||L x||_p^p
\end{equation}
for $1\leq p \leq 2$, where $A\in\mathbb{R}^{m\times n}$ is an ill-conditioned matrix that describes a forward operation, for example, modeling some physical process, and the data $b\in\mathbb{R}^{m}$ contains measurement errors or ``noise'', such that $b = b_{ex} + e$. It is well known that for such problems it is necessary to include a regularization term, since the naive solution (i.e. with $\alpha = 0$) is dominated by noise and does not provide any useful information about the ``exact solution'' $x_{ex}$ corresponding to the noiseless data $b_{ex}$, see for instance \cite{doi:10.1137/1.9780898718836}. The norm $||x||_p = \left(\sum_{i=1}^{n} |x_i|^{p}\right)^{\frac{1}{p}}$ denotes the standard $\ell_p$ norm of a vector $x\in\mathbb{R}^{n}$. For $p=2$ we get the standard Euclidean norm and we sometimes use the simplified notation $||x||_2 = ||x||$. The matrix $L\in\mathbb{R}^{s\times n}$ is referred to as the regularization matrix and is not necessarily square or invertible. It can be used to significantly improve the quality of the reconstruction. Regularization refers to the fact that we incorporate some prior knowledge about the exact solution $x_{ex}$ to obtain a well-posed problem. For instance, if we know that the exact solution is smooth, then a good choice is standard form Tikhonov regularization, which corresponds to choosing $L = I_n$ the identity matrix of size $n$ and $p=2$. When we know that a certain transformation of $x_{ex}$ is smooth, we can take $p=2$ and choose a suitable regularization matrix $L\neq I_n$, in which case we refer to \cref{eq:lp} as general form Tikhonov regularization. A popular choice of regularization matrix is, for instance, a finite difference approximation of the first or second derivative operator. A wide range of efficient algorithms have been developed for the standard form and general form Tikhonov problem \cite{GAZZOLA2014180,Cornelis_2020,LAMPE20122845,CALVETTI2000423}. 

The choice $L = I_n$ and $p = 1$ has also received a lot of attention in literature, since it is known that the $\ell_1$ norm induces sparsity in the solution \cite{doi:10.1137/130917673,rodriguez2008efficient,10.1137/S1064827596304010,doi:10.1137/18M1194456,558475}. Total variation regularization \cite{chan2006total,4380459}, which is a popular regularization technique in image deblurring, provides us an example with $L\neq I_n$ and $p=1$. Here the solution $x \in\mathbb{R}^{n}$ is a vector obtained by stacking all columns of a pixel image $X\in\mathbb{R}^{N\times N}$ with $n = N^2$ and the matrix $A$ is a blurring operator. Let us denote the anisotropic total variation function as
\begin{equation} \label{eq:tv}
\text{TV}(x) = \sum_{i,j = 1}^{N} |\partial^{(i,j)}_h(X)| + |\partial^{(i,j)}_v(X)|.
\end{equation}
with finite difference operators in the horizontal and vertical direction given by 
\begin{equation*} \partial^{(i,j)}_h(X) = \left\{ \begin{matrix} X_{i,j+1} - X_{i,j}  & j<N \\ 0 & j=N \end{matrix} \right. \hspace{0.5cm}\text{and}\hspace{0.5cm} \partial^{(i,j)}_v(X) = \left\{ \begin{matrix} X_{i+1,j} - X_{i,j}  & i<N \\ 0 & i=N \end{matrix} \right. . 
\end{equation*}
We can rewrite this in a more convenient way by first writing 
\begin{equation}\label{eq:forward}
D = \begin{pmatrix}
1 & - 1& & & \\ & \ddots&\ddots&&\\ && 1&-1
\end{pmatrix} \in\mathbb{R}^{(N-1) \times N}
\end{equation}
which represents a finite difference approximation of the derivative operator in one dimension.
Let  $\otimes$ denote the Kronecker product. We can compactly write $\text{TV}(x) = ||L x||_1$ with 
\begin{equation}\label{eq:ltv}
L = \begin{pmatrix}D_h \\D_v \end{pmatrix} \in\mathbb{R}^{(2n-2N) \times n} \hspace{0.5cm}\text{and} \hspace{0.5cm}  \left\{ \begin{matrix} D_h =  D  \otimes I_N \in \mathbb{R}^{(n - N) \times n} \\
D_v = I_N \otimes D \in \mathbb{R}^{(n - N) \times n} \end{matrix}\right. . 
\end{equation}
The matrices $D_h$ and $D_v$ represent the two dimensional finite difference approximation of the derivative operator in the horizontal and the vertical direction respectively.

The scalar $\alpha\in\mathbb{R}$ in \cref{eq:lp} is the regularization parameter and has a huge impact on the quality of the reconstruction. If this value is too small, then the solution $x_\alpha$ closely resembles the naive solution and is ``over-fitted'' to the noisy data $b$. On the other hand, if $\alpha$ is too large, then $x_\alpha$ is not a good solution to the inverse problem anymore and is, for instance, in the case of Tikhonov regularization ``over-smoothed''.  Different parameter choice methods exist for choosing a suitable $\alpha$. One of the most straightforward ways to choose the regularization parameter is given by the discrepancy principle, which states that we should choose $\alpha$ such that   
\begin{equation} \label{eq:discrepancy}
||Ax_{\alpha} - b||_2 = \eta ||e||_2,
\end{equation}
where $\eta\geq 1$ is a safety factor. Obviously $||e||_2$ is not available in practice, so this approach assumes we have some estimate of this value available.

Note that \cref{eq:lp} is a convex optimization problem, which means that any local solution is also a global solution. However, for $p<2$ the problem is non-differentiable. Hence, we cannot use algorithms that rely on the gradient or Hessian of the objective function, like steepest descent or Newton's method \cite{nocedal2006numerical}. To overcome this issue, we simply use a smooth approximation of the regularization term $||Lx||_{p}^{p}$ when $p<2$. 
 
In this paper we develop an algorithm that simultaneously solves (a smooth approximation of) the $\ell_p$ regularized problem \cref{eq:lp} and determines the corresponding regularization parameter $\alpha$ such that the discrepancy principle \cref{eq:discrepancy} is satisfied. The problem is reformulated as a constrained optimization problem for which the solution satisfies a system of nonlinear equations. Newton's method can be used to solve this problem \cite{landi2008lagrange}.

However, this approach can be quite computationally expensive for
large-scale problems since each iteration requires the solution of a
large linear system to obtain the Newton direction. A Krylov subspace
method is typically used to solve the linear system, leading to an
expensive outer-inner iteration scheme, i.e. each outer Newton
iteration requires a number of inner Krylov subspace iterations. We
circumvent this by projecting the constrained optimization problem on
a low-dimensional Generalized Krylov subspace and we calculate the
Newton direction for this projected problem. In each iteration of the
algorithm we use this search direction in combination with a
backtracking line search, after which the Generalized Krylov subspace
is expanded.  Further improvements to the algorithm are presented in
the case of general form Tikhonov regularization. This newly developed
algorithm can be seen as a generalization of the Projected Newton
method for standard form Tikhonov regularization
\cite{Cornelis_2020}. In fact, some results from  \cite{Cornelis_2020}
are extended and proven in a more general context.

We compare our method with the Generalized Krylov subspace (GKS) method developed in \cite{LAMPE20122845}, which can be used to solve the general form Tikhonov problem. In addition, we also compare the performance of the Projected Newton method with that of the GKSpq method developed in \cite{doi:10.1137/140967982} by a number of timing experiments. The latter algorithm combines the well-established Iteratively reweighted norm approach \cite{rodriguez2008efficient,4380459} with a projected step on a low-dimensional Generalized Krylov subspace. We perform experiments using the sparsity inducing $\ell_1$ norm as well as the total variation regularization term.

The paper is organized as follows. In \cref{sec:smooth_approx} we
define a smooth approximation of the $\ell_p$ norm for $p<2$ and in \cref{sec:newton} we formulate the nonlinear
system of equations that describes the problem of interest.  The main
contribution of this paper is presented in \cref{sec:main}, where we
develop the Projected Newton method and prove our main
results. \Cref{sec:refmeth} describes two reference methods that we
use to compare the Projected Newton method with. Next, in
\cref{sec:num} we provide a number of experiments illustrating the
performance and overall behavior of the newly proposed
algorithm. Lastly, this work is concluded in \cref{sec:concl}.
 
\section{Smooth approximation of the $\ell_p$ norm} \label{sec:smooth_approx}

The $\ell_p$ norm is non-differentiable for $p<2$, which makes the
optimization problem a bit more difficult. However, it is easy to
formulate a smooth approximation $\Psi_p(x) \approx
\frac{1}{p}||x||^{p}_p$, where $\Psi_p: \mathbb{R}^n\rightarrow
\mathbb{R}$ is a twice continuously differentiable convex function. More precisely we define
\begin{equation}\label{eq:smoothlp}
\Psi_p(x) = \frac{1}{p}\sum_{i=1}^{n}(x_{i}^2 + \beta)^{\frac{p}{2}},
\end{equation}
where $\beta > 0$ is a small scalar that ensures smoothness. Other possible smooth approximations of the absolute value function can alternatively be chosen, see for instance \cite{saheya2018numerical,herty2007smoothed,wu2019signal}.  
The gradient $\nabla \Psi_p(x)=\left(\frac{\partial \Psi_p(x)}{\partial x_1} ,\ldots,\frac{\partial \Psi_p(x)}{\partial x_n}\right)^T$ is given by the partial derivatives
\begin{equation*}
\frac{\partial \Psi_p(x)}{\partial x_i} = x_i (x_i^2 + \beta)^{\frac{p}{2}-1}
\end{equation*}
for $i=1,\ldots,n$. The Hessian matrix $\nabla^{2}\Psi_p(x) = \left( \frac{\partial^2 \Psi_p(x)}{\partial x_i\partial x_j}  \right)_{i,j = 1,\ldots,n}$ is a diagonal matrix since $\frac{\partial \Psi_p(x)}{\partial x_i}$ does not contain any $x_j$ with $i\neq j$ and is thus given by the following second derivatives: 
\begin{equation*}
\frac{\partial^2 \Psi_p(x)}{\partial x_i^2} = (x_i^2 + \beta)^{\frac{p}{2}-1} + 2 (\frac{p}{2}-1)x_i^2 (x_i^2 + \beta)^{\frac{p}{2}-2} > 0.
\end{equation*}
From this it also follows that $\Psi_p(x)$ is strictly convex. The following lemma can be used to calculate the gradient and Hessian for the smooth approximation $\Psi_{p}(Lx) \approx \frac{1}{p}||Lx||_p^p$. 

\begin{lemma} \label{thm:chain}
Let $L\in\mathbb{R}^{s \times n}$ and $x\in\mathbb{R}^n$. Let $\tilde{\Psi}(z):\mathbb{R}^{s} \longrightarrow \mathbb{R}$ be a twice continuously differentiable function and $\Psi(x) = \tilde{\Psi}(Lx)$, then we have
\begin{align*}
\nabla \Psi (x) &=  L^T \nabla \tilde{\Psi} (L x), \\
\nabla^2 \Psi (x) &=  L^T \nabla^2 \tilde{\Psi} (L x ) L.
\end{align*}
\end{lemma}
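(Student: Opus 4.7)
The plan is to apply the multivariable chain rule directly and collect terms in matrix form. Write $z = Lx$, so that $z_k = \sum_{j=1}^n L_{kj} x_j$ and $\Psi(x) = \tilde{\Psi}(z)$. By the standard chain rule for functions of several variables, the partial derivative of $\Psi$ with respect to $x_i$ is
\begin{equation*}
\frac{\partial \Psi}{\partial x_i}(x) = \sum_{k=1}^{s} \frac{\partial \tilde{\Psi}}{\partial z_k}(Lx)\, \frac{\partial z_k}{\partial x_i} = \sum_{k=1}^{s} L_{ki}\, \frac{\partial \tilde{\Psi}}{\partial z_k}(Lx),
\end{equation*}
which I recognize as the $i$-th component of the matrix–vector product $L^T \nabla \tilde{\Psi}(Lx)$. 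Stacking these components yields the first identity $\nabla \Psi(x) = L^T \nabla \tilde{\Psi}(Lx)$.

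For the Hessian, I would differentiate the expression above once more with respect to $x_j$, noting that $L_{ki}$ does not depend on $x$ and that the chain rule must again be applied to $\partial \tilde{\Psi}/\partial z_k$ evaluated at $Lx$. This gives
\begin{equation*}
\frac{\partial^2 \Psi}{\partial x_j \partial x_i}(x) = \sum_{k=1}^{s} L_{ki}\, \frac{\partial}{\partial x_j}\!\left[\frac{\partial \tilde{\Psi}}{\partial z_k}(Lx)\right] = \sum_{k=1}^{s}\sum_{\ell=1}^{s} L_{ki}\, \frac{\partial^2 \tilde{\Psi}}{\partial z_k \partial z_\ell}(Lx)\, L_{\ell j},
\end{equation*}
which is precisely the $(i,j)$ entry of $L^T \nabla^2 \tilde{\Psi}(Lx)\, L$. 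Assembling all entries gives the second identity.

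There is essentially no obstacle here beyond careful index bookkeeping; the hypothesis that $\tilde{\Psi}$ is twice continuously differentiable is exactly what is needed to justify the repeated application of the chain rule and the equality of mixed partials (so that the Hessian is well defined as a symmetric matrix and the order of indices in the double sum does not matter). The only mild subtlety worth flagging in the write-up is the distinction between derivatives of $\tilde{\Psi}$ taken at the point $Lx$ and derivatives with respect to $x_i$, which is why the factor $L_{ki}$ (respectively $L_{\ell j}$) appears on the outside of the inner derivative.
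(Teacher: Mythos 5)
Your proof is correct and follows exactly the route the paper takes: the paper's own proof is a one-line appeal to the multivariate chain rule, and you have simply written out the index bookkeeping that justifies it. Nothing is missing.
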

\begin{proof}
This follows from the chain rule for multivariate functions. 
\end{proof}

\section{Reformulation of the problem}\label{sec:newton}

For the moment, let us denote $\Psi(x) : \mathbb{R}^n \longrightarrow \mathbb{R}$ any twice continuously differentiable convex function. For instance, for the smooth approximation of the regularization term $\frac{1}{p}||Lx||_p^p$ with $1\leq p < 2$ we take $\Psi(x)=\Psi_p(Lx)$, while for $p=2$ we simply take the actual regularization term $\Psi(x) = \frac{1}{2}||Lx||_2^2$ since it is already smooth.  The goal is to develop an efficient algorithm that can simultaneously solve the convex optimization problem
\begin{equation}\label{eq:smooth}
x_\alpha = \argmin_{x\in\mathbb{R}^n} \frac{1}{2} ||A x - b||_2^2 + \alpha \Psi(x),
\end{equation}
and find the corresponding regularization parameter $\alpha$ such that the discrepancy principle \cref{eq:discrepancy} is satisfied. Note that this goal is slightly more general than expressed in the introduction. The (possibly nonlinear) system of equations
\begin{equation}\label{eq:smooth_cond}
A^T(Ax - b) + \alpha \nabla \Psi(x) = 0
\end{equation}
are necessary and sufficient conditions for $x$ to be a global solution for \cref{eq:smooth} due to convexity of the objective function. Uniqueness of the solution is in general not guaranteed. For instance, if we want the ensure a unique solution for general form Tikhonov regularization we can add the requirement that $\mathcal{N}(A)\cap\mathcal{N}(L)= \{0\}$, where $\mathcal{N}(\cdot)$ denotes the null-space of a matrix.   
In fact, the following more general result holds
\begin{lemma} \label{thm:unique}
Let $\Psi(x) = \tilde{\Psi}(Lx)$ with a twice continuously differentiable strictly convex function $\tilde{\Psi}(z):\mathbb{R}^s \longrightarrow \mathbb{R}$ and $L\in\mathbb{R}^{s\times n}$ such that $\mathcal{N}(A)\cap\mathcal{N}(L)= \{0\}$ then it holds that \cref{eq:smooth} has a unique solution.
\end{lemma}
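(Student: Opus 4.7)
The plan is to prove uniqueness by contradiction. Suppose that $x_1\neq x_2$ are two distinct global minimizers of \cref{eq:smooth}. The objective $f(x) = \tfrac{1}{2}\|Ax-b\|_2^2 + \alpha\,\tilde{\Psi}(Lx)$ is convex: the fidelity term is convex, and $\tilde{\Psi}(Lx)$ is the composition of a convex function with a linear map, hence convex. It follows that every point on the segment $x_t = (1-t)x_1 + t x_2$ with $t\in[0,1]$ is also a global minimizer, so $f(x_t)$ is constant, and in particular affine, in $t$.

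Next I would split $f(x_t) = g_1(t) + g_2(t)$, where $g_1(t) = \tfrac{1}{2}\|Ax_t - b\|_2^2$ and $g_2(t) = \alpha\,\tilde{\Psi}(Lx_t)$ are both convex one-variable functions. A short argument shows that when two convex one-variable functions sum to an affine function on an interval, each must itself be affine: applying the convexity inequality $g_i(t) \leq (1-t)g_i(0) + t g_i(1)$ for $i=1,2$ and summing produces the same affine function that $g_1 + g_2$ equals, so equality is forced in each inequality for every $t\in[0,1]$. This ``decoupling'' step is the main subtle point of the argument; everything else follows quickly.

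From the affineness of $g_1(t) = \tfrac{1}{2}\|Ax_1-b + tA(x_2-x_1)\|_2^2$, the quadratic coefficient $\tfrac12\|A(x_2-x_1)\|_2^2$ in $t$ must vanish, giving $x_2-x_1\in\mathcal{N}(A)$. For the regularization term, set $w = L(x_2-x_1)$ and assume for contradiction that $w\neq 0$. Then for any $0\leq t_1 < t_2 \leq 1$ the points $Lx_1+t_1 w$ and $Lx_1+t_2 w$ are distinct, so strict convexity of $\tilde{\Psi}$ applied to any nontrivial convex combination of $t_1$ and $t_2$ immediately yields that $t\mapsto\tilde{\Psi}(Lx_1+tw)$ is strictly convex on $[0,1]$, contradicting the affineness of $g_2$. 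Hence $w = L(x_2-x_1) = 0$.

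Combining both conclusions, $x_2 - x_1 \in \mathcal{N}(A)\cap\mathcal{N}(L) = \{0\}$ by hypothesis, so $x_1 = x_2$, contradicting our initial assumption. I expect the decoupling of the two convex summands to be the only step deserving explicit justification; the vanishing of $A(x_2-x_1)$ follows directly from the quadratic form of the fidelity, and the vanishing of $L(x_2-x_1)$ is an immediate consequence of the definition of strict convexity of $\tilde{\Psi}$ restricted to a line.
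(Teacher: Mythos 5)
Your proof is correct, and it ultimately rests on the same two facts that drive the paper's proof --- strict convexity of $z \mapsto \tfrac{1}{2}\|z-b\|^2$ handles the $A$-direction and strict convexity of $\tilde{\Psi}$ handles the $L$-direction, with the hypothesis $\mathcal{N}(A)\cap\mathcal{N}(L)=\{0\}$ closing the argument --- but you organize it differently. The paper works only with the midpoint $x_3=(x_1+x_2)/2$: it first observes that $Ax_1=Ax_2$ and $Lx_1=Lx_2$ cannot both hold (else $x_1-x_2\in\mathcal{N}(A)\cap\mathcal{N}(L)=\{0\}$), then does a case analysis showing that whichever equality fails makes the corresponding term of the objective strictly smaller at $x_3$ than the average of its values at $x_1$ and $x_2$, contradicting global optimality. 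You instead use the full segment of minimizers and the decoupling lemma that two convex functions summing to an affine (here constant) function must each coincide with their chord, from which you extract \emph{both} $A(x_2-x_1)=0$ (vanishing quadratic coefficient of $g_1$) and $L(x_2-x_1)=0$ (strict convexity of $\tilde{\Psi}$ along a line) before invoking the null-space condition. Your justification of the decoupling step is correct and it is indeed the only delicate point; what it buys you is the elimination of the paper's case split and a slightly stronger intermediate conclusion (both differences vanish, rather than a contradiction from ``at least one is nonzero''), at the cost of an auxiliary lemma the paper does not need. Both arguments implicitly assume $\alpha>0$, as does the paper.
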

\begin{proof}
Let $x_1$ and $x_2$ be two different solutions of \cref{eq:smooth}. Suppose that both $Lx_1=Lx_2$ and $Ax_1 = Ax_2$ hold. This would imply that $x_1 - x_2 \in \mathcal{N}(A)\cap\mathcal{N}(L)$, meaning $x_1 = x_2$, which contradicts the assumption that we have two different solutions. Hence, at least one of these should be an inequality. Let us denote $x_3 = (x_1 + x_2)/2$. Suppose $Lx_1 \neq Lx_2$, then we have that $\tilde{\Psi}(Lx_3) < \frac{1}{2}\tilde{\Psi}(Lx_1) +  \frac{1}{2}\tilde{\Psi}(Lx_2)$ since $\tilde{\Psi}$ is strictly convex. If $Lx_1 = Lx_2$ then we must have  $Ax_1 \neq Ax_2$ and we have in this case $||Ax_3 - b||^2 < \frac{1}{2}||Ax_1 - b||^2 + \frac{1}{2}||Ax_2 - b||^2 $ since the squared Euclidean norm is also strictly convex. Hence, at least one of these inequalities is strict. This implies that 
\begin{equation*}
\frac{1}{2}||A x_3- b||_2^2 + \alpha\tilde{\Psi}(Lx_3) <  \frac{1}{4} ||A x_1 - b||_2^2 + \frac{\alpha}{2}\tilde{\Psi}(L x_1) +  \frac{1}{4} ||A x_2 - b||_2^2 + \frac{\alpha}{2}\tilde{\Psi}(L x_2). 
\end{equation*}
This leads to a contradiction since every local solution is a global solution and we have found a point $x_3$ with strictly smaller objective value than $x_1$ and $x_2$.
\end{proof}
\noindent Recall that $\tilde{\Psi}(z) = \Psi_p(z)$ is strictly convex and thus it follows from this lemma that the smooth approximation of the $\ell_p$ regularized problem has a unique solution if $\mathcal{N}(A)\cap\mathcal{N}(L)= \{0\}$. 

Let us now turn our attention to the following constrained optimization problem
\begin{equation} \label{eq:reform}
\min_{x\in\mathbb{R}^n} \hspace{0.2cm} \Psi(x) \hspace{1cm} \text{subject to} \hspace{1cm} \frac{1}{2} ||Ax - b||^{2}_2 = \frac{\sigma^2}{2},
\end{equation}
where we denote $\sigma = \eta ||e||_2$ the value used in the discrepancy principle \cref{eq:discrepancy}. We assume that $||b|| > \sigma$, since otherwise the noise is larger than the data and there is no use in trying to solve the inverse problem. This optimization problem is closely related to \cref{eq:smooth}. A solution $(x^{*},\lambda^{*})$ to \cref{eq:reform} has to satisfy the nonlinear system of equations $F(x^*,\lambda^*) = 0$ with
\begin{equation} \label{eq:bigf}
F(x,\lambda) = \begin{pmatrix}
 \lambda A^T(Ax-b)+ \nabla \Psi(x) \\
\frac{1}{2} ||Ax - b||^{2}_2 - \frac{\sigma^2}{2}\end{pmatrix} \in \mathbb{R}^{n + 1}.
\end{equation}
These express the first order optimality conditions of problem \eqref{eq:reform}, also known as Karush-Kuhn-Tucker or KKT-conditions \cite{nocedal2006numerical}. The author in \cite{landi2008lagrange} showed, under the assumption that there exists a constant $c\geq 0$ such that for all $x$ it holds that $\Psi(x)\geq c,\Psi(0) = c$ and that $\{x : \Psi(x) = c\} \cap \{ x : ||Ax-b||\leq \sigma\} = \emptyset$, that they are sufficient conditions for $(x^{*},\lambda^{*})$ to be a global solution and that the Lagrange multiplier $\lambda^{*}$ is strictly positive. Note that this assumption holds for $\Psi(x) = ||x||^2_2$ with $c=0$ and for $\Psi(x) = \Psi_p(x)$ with $c=n\beta^{\frac{p}{2}}/p$ since for these choices we have $\{x : \Psi(x) = c\} = \{ 0\}$; and since $||b||>\sigma$ the intersection is indeed empty. If we consider $\Psi(x) = ||L x||^2_2$ and $\Psi(x) = \Psi_p(Lx)$, we can again take the corresponding value for $c$ and we have $\{x : \Psi(x) = c\} = \mathcal{N}(L)$. However, in general we cannot say much about $ \mathcal{N}(L)\cap \{ x : ||Ax-b||\leq \sigma\} $.

Now, if $(x,\lambda)$ is any root of the first component of \cref{eq:bigf} with $\lambda>0$ it is also a solution to \cref{eq:smooth} for $\alpha = 1/\lambda$, which follows from the fact that $(x,\alpha)$ solves the optimality conditions \cref{eq:smooth_cond} in that case. This means that if we solve \cref{eq:reform} we simultaneously solve the regularized linear inverse problem \cref{eq:smooth} and find the corresponding regularization parameter $(\alpha = 1/\lambda)$ such that the discrepancy principle is satisfied. Due to the straightforward connection between $\alpha$ and $\lambda$ we refer to both quantities as the regularization parameter. Uniqueness of the solution of \cref{eq:reform} can be proven under the same conditions as \cref{thm:unique} using similar arguments as presented in its proof.

The Newton direction for the nonlinear system of equations $F(x,\lambda)=0$ given by \cref{eq:bigf} in a point $(x,\lambda)$ is given by the solution of the linear system
\begin{equation} \label{eq:bignewt}
J(x,\lambda) \begin{pmatrix} \Delta x \\ \Delta \lambda \end{pmatrix} = -F(x,\lambda), 
\end{equation}
where the Jacobian $J(x,\lambda) \in \mathbb{R}^{(n+1)\times (n+1)}$ of the function $F(x,\lambda)$ is given by
\begin{equation}\label{eq:jacbig}
J(x,\lambda) =\begin{pmatrix}\lambda  A^T A + \nabla^{2} \Psi(x)  &  A^T(Ax-b) \\ (Ax-b)^T A & 0 \end{pmatrix}.
\end{equation}

This linear system \cref{eq:bignewt} cannot be solved using a direct method when $n$ is very large. Moreover, for many applications the matrix $A$ is not explicitly given and we can only compute matrix-vector products with $A$ and $A^T$. Hence, a Krylov subspace method such as MINRES \cite{doi:10.1137/0712047} is used to compute the Newton direction. However, every iteration of the linear solver requires a matrix-vector product with $A$ and $A^T$, which becomes  expensive when a lot of iterations need to be performed. In the following section we develop the  alternative approach proposed in this paper.

\section{Projected Newton method}\label{sec:main}
In this section we derive a	Newton-type method that solves \cref{eq:reform} and that only requires one matrix-vector product with $A$ and one matrix-vector product with $A^T$ in each iteration.

\subsection{Projected minimization problem} \label{sec:projminsec}
Suppose we have a matrix $V_k = [v_0,v_1,\ldots,v_{k-1}] \in \mathbb{R}^{n\times k}$ with orthonormal columns, i.e. $V_k^T V_k = I_k$. The index $k$ is the iteration index of the algorithm that we describe in this section. Here, and in what follows, a sub-index refers to a certain iteration number rather than, for instance, an element of a vector.  Let $\mathcal{R}(V_k)$ denote the range of the matrix $V_k$, i.e. the space spanned by all columns of the matrix. We consider the $k$-dimensional \textit{projected minimization problem} 
\begin{align} \label{eq:projmin}
& \min_{x\in\mathcal{R}(V_{k})} \hspace{0.2cm} \Psi(x) \hspace{1cm} \text{subject to} \hspace{1cm} \frac{1}{2} ||Ax - b||^{2}_2 = \frac{\sigma^2}{2}  \nonumber \\
\Leftrightarrow & \min_{y\in \mathbb{R}^k} \hspace{0.2cm} \Psi(V_k y) \hspace{1cm} \text{subject to} \hspace{1cm}  \frac{1}{2} ||AV_k y - b||^{2}_2 = \frac{\sigma^2}{2}. 
\end{align}
The corresponding KKT conditions are now given by
\begin{equation}\label{eq:projfunction}
F^{(k)}(y,\lambda) = 
\begin{pmatrix}
 \lambda V_k^T A^T (AV_k y - b) + V_k^T \nabla \Psi(V_k y) \\
\frac{1}{2} ||AV_k y - b||^{2}_2 - \frac{\sigma^2}{2} \end{pmatrix} = \begin{pmatrix} V_{k}^T & 0 \\ 0 & 1 \end{pmatrix} F(V_k y,\lambda),
\end{equation}
which can be seen as a projected version of \cref{eq:bigf}. 
The Jacobian of the \textit{projected function} $F^{(k)}(y,\lambda) \in\mathbb{R}^{k + 1}$, which we refer to as the \textit{projected Jacobian},  is given by
\begin{equation} \label{eq:jacobian}
J^{(k)}(y,\lambda)  = \begin{pmatrix} \lambda V_k^T A^T A V_k + V_k^T\nabla^2 \Psi(V_k y) V_k & V_k^T A^T (AV_k y - b) \\  (AV_k y - b)^TAV_k & 0 \end{pmatrix} \in\mathbb{R}^{(k + 1)\times (k + 1)}.
\end{equation}
We have the following connection between the Jacobian \cref{eq:jacbig} and projected Jacobian
\begin{equation} \label{eq:vjv}
 J^{(k)}(y,\lambda) = \begin{pmatrix} V_{k}^T & 0 \\ 0 & 1 \end{pmatrix} J(V_k y,\lambda) \begin{pmatrix} V_{k} & 0 \\ 0 & 1 \end{pmatrix}. 
\end{equation}

Let us denote $\bar{y}_{0}= 0$ and $\bar{y}_{k-1} = (y_{k-1}^T,0)^T \in \mathbb{R}^{k}$ for $k>1$ where $y_{k-1}$ is an approximate solution for the $(k -1)$-dimensional minimization problem, i.e. problem \cref{eq:projmin} with $k$ replaced by $k-1$. Let us furthermore write $x_{k} = V_{k}y_{k}$ for all $k>0$ and $x_0 = 0$. Since the last component of $\bar{y}_{k-1}$ is zero we have $V_{k}\bar{y}_{k - 1} = V_{k - 1}y_{k-1} = x_{k - 1}$ for $k>1$ and $V_1 \bar{y}_0 = x_0 = 0$ by definition. 
If $J^{(k)}(\bar{y}_{k-1},\lambda_{k-1})$ is nonsingular we can calculate the Newton direction for the projected problem as the solution of the linear system
\begin{equation} \label{eq:newtoneq}
 J^{(k)}(\bar{y}_{k-1},\lambda_{k-1}) \begin{pmatrix} \Delta y_{k}\\ \Delta \lambda_{k} \end{pmatrix} = - F^{(k)}(\bar{y}_{k-1},\lambda_{k-1}).
\end{equation}
We refer to this as the \textit{projected Newton direction}. For a suitably chosen step-length $0<\gamma_k \leq 1$, we can update our sequence by 
\begin{equation*}
y_{k} = \bar{y}_{k-1} + \gamma_k  \Delta y_{k}, \hspace{0.5cm}\lambda_{k} = \lambda_{k-1} + \gamma_k \Delta \lambda_{k}.
\end{equation*}
This gives us a corresponding update for $x_{k}$:
\begin{equation*}
x_{k} = V_{k} y_{k} = V_{k}\bar{y}_{k-1} + \gamma_k  V_{k}\Delta y_{k} = x_{k-1} + \gamma_k \Delta x_{k},
\end{equation*}
where we define $\Delta x_k = V_{k}\Delta y_{k}$. Note that this step is different from the step that would be obtained by calculating the true Newton direction $J(x_{k-1},\lambda_{k-1})^{-1}F(x_{k-1},\lambda_{k-1})$. However, if we choose a particular basis $V_k$, we will show that this provides us with a descent direction for the merit function $f(x,\lambda)= \frac{1}{2} ||F(x,\lambda)||^2$. 

In general we can not conclude from \cref{eq:projfunction} that 
\begin{equation}\label{eq:fk2}
\begin{pmatrix} V_{k} & 0 \\ 0 & 1 \end{pmatrix}F^{(k)}(\bar{y}_{k-1},\lambda_{k-1}) =  F(V_{k}\bar{y}_{k-1},\lambda_{k-1}).
\end{equation}
However, by considering a specific choice of basis $V_k$ we can enforce this equality. Recall that $V_{k}\bar{y}_{k-1} = x_{k-1}$. Let us denote 
\begin{equation} \label{eq:vtilde}
\tilde{v}_{k - 1} = \lambda_{k-1} A^T(Ax_{k-1} - b) + \nabla \Psi(x_{k-1})
\end{equation}
the first component of $F(x_{k-1},\lambda_{k-1})$, see \cref{eq:bigf}. 
Equation \eqref{eq:fk2} holds if and only if
\begin{equation*}
V_k V_k^T \tilde{v}_{k-1} = \tilde{v}_{k-1}
\end{equation*}
which is true if $\tilde{v}_{k-1} \in \mathcal{R} (V_{k})$. 

Now we have a straightforward way to construct the basis $V_k = [v_0,\ldots,v_{k-1}]$ such that \cref{eq:fk2} holds. Let $V_1 = \tilde{v}_0/ ||\tilde{v}_0||$, with $\tilde{v}_0 = -\lambda_0 A^T b$ since $x_0=0$ and $\nabla\Psi(0)=0$. 
Suppose we already have the basis $V_{k}$ in iteration $k$ and have just constructed new variables $\lambda_{k}$ and $x_{k}$. To add a new vector $v_k$ to get $V_{k+1}$, we simply take $\tilde{v}_k$ and orthogonalize it to all previous vectors in $V_{k}$ using Gram-Schmidt and then normalize, i.e: 
\begin{equation} \label{eq:gs}
v_{k} = \tilde{v}_{k} - \sum_{j=0}^{k-1} (v_{j}^T \tilde{v}_{k}) v_{j},\hspace{0.5cm} v_{k} = v_{k}/||v_{k}||. 
\end{equation}
Note that we use modified Gram-Schmidt in the actual implementation of the algorithm, but for notational convenience we write in the classical way. By construction we now have that $V_{k}$ has orthonormal columns and
\begin{equation} \label{eq:genkryl}
\mathcal{R}(V_{k + 1}) = \text{span} \left\{\tilde{v}_0,\tilde{v}_1,\ldots,\tilde{v}_{k}  \right\}.
\end{equation}
The basis $V_k$ is unique up to sign change of each of the vectors.

First note that when the $k$-dimensional projected minimization problem \cref{eq:projmin} is solved for $y = y_{k}$, i.e. if $||F^{(k)}(y_k,\lambda_k)||=0$, this does not necessarily imply that problem \cref{eq:reform} is solved for $x_k = V_ky_k$.
However, in that case we have that $\lambda_k V_k^T A^T (Ax_k - b) + V_k^T \nabla \Psi(x_k)=0$. This implies that $\tilde{v}_{k}$ is already orthogonal to $V_k$ and then the Gram-Schmidt procedure is in principle not necessary. In this case, since in practice we are always working with finite precision arithmetic, \cref{eq:gs} can be seen as a re-orthogonalization step.

Secondly, we note that the basis $V_k$ can be seen as a Generalized Krylov subspace, similarly as in \cite{LAMPE20122845}. Indeed, let us denote the Krylov subspace of dimension $k$ for $M\in\mathbb{R}^{n\times n}$ and $v\in\mathbb{R}^n$ as $\mathcal{K}_k(M,v) = \text{span}\left\{v,Mv,\ldots,M^{k-1}v\right\}$. Consider the case when $\Psi(x) = \frac{1}{2}||x||^2$, i.e. standard form Tikhonov regularization. Then we have 
\begin{equation*}
\tilde{v}_{k - 1} = \lambda_{k-1} A^T(Ax_{k-1} - b) + x_{k-1} =  (\lambda_{k-1} A^T A + I_n) x_{k-1} - A^T b.
\end{equation*}
In particular for $x_0=0$ we have $v_{0} = \pm A^Tb / ||A^T b||$. Now, due to the shift invariance of Krylov subspaces, i.e. the fact that
\begin{equation}\label{eq:shiftinv}
\mathcal{K}_k(A^TA,A^Tb) = \mathcal{K}_k(A^TA + \alpha I, A^T b), \hspace{1cm} \forall \alpha\in\mathbb{R}
\end{equation}
it can easily be seen that $\mathcal{R}(V_k)=\mathcal{K}_k(A^TA,A^Tb)$. As a consequence, we now also have that $V_k^T A^T A V_k$ is a tridiagonal matrix. This is because the basis $V_k$ is (up to sign change of the vectors) the same basis as generated by the Golub-Kahan bidiagonalization procedure \cite{paige1982lsqr,golub1965calculating}. This basis satisfies a relation of the form $AV_k = U_{k+1}B_{k+1,k}$ with a upper bidiagonal matrix $B_{k+1,k} \in\mathbb{R}^{(k+1) \times k}$ and matrix $U_{k+1} \in\mathbb{R}^{m \times n}$ with orthonormal columns. Hence, we get $V_k^TA^TAV_k = B_{k+1,k}^T B_{k+1,k}$, which is indeed tridiagonal. 
Note that the Golub-Kahan bidiagonalization procedure is used in the Projected Newton method for standard form Tikhonov regularization \cite{Cornelis_2020}. Hence, the results presented in this section can be seen as a generalization of some of the results in \cite{Cornelis_2020}. In fact, the algorithm presented in the current paper, when applied to the standard form Tikhonov problem, is (in exact arithmetic) equivalent to the algorithm presented in \cite{Cornelis_2020}, although implemented in a different way.  

When we consider, for instance, general form Tikhonov regularization, i.e.  $\Psi(x) = \frac{1}{2}||Lx||^2$ with $L\neq I_n$ we do not have in general that $V_{k}$ spans a Krylov subspace. This would only be true if the regularization parameter $\lambda_{k}$ remains the same for all $k$. The interesting thing to note, however, is that if the parameter $\lambda_{k}$ stabilizes quickly, then we can recognize an approximate tridiagonal structure in $V_{k}^T ( A^TA + \alpha_k L^T L) V_k$ with $\alpha_k = 1/\lambda_k$. More precisely, the elements on the three main diagonals have much larger magnitude than all other elements. We discuss this property in a bit more detail in \cref{sec:num} and illustrate it with a small numerical experiment (see experiment 1).

Using the Generalized Krylov subspace basis $V_k$ as described above, we can prove the following theorem:
\begin{theorem} \label{thm:descent} Let $V_k$ be the Generalized Krylov subspace basis \cref{eq:genkryl} and $(\Delta y_k^T,\Delta \lambda_k)^T$ be the Projected Newton direction obtained by solving \cref{eq:newtoneq}. Then the step $\Delta_{k} = ( \Delta x_{k}^T , \Delta \lambda_{k})^T $ with $\Delta x_k = V_k \Delta y_k$ is either a descent direction for $f(x_{k-1},\lambda_{k-1})$, i.e.
\begin{equation*}
\Delta_{k}^T \nabla f(x_{k-1},\lambda_{k-1}) < 0
\end{equation*}
or we have found a solution to \cref{eq:reform}.
\end{theorem}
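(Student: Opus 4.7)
The plan is to compute $\Delta_k^T \nabla f(x_{k-1},\lambda_{k-1})$ directly and show it equals $-\|F(x_{k-1},\lambda_{k-1})\|^2$. Write $P_k = \begin{pmatrix} V_k & 0 \\ 0 & 1\end{pmatrix}$ and $\delta_k = (\Delta y_k^T,\Delta\lambda_k)^T$, so that $\Delta_k = P_k\delta_k$ and $P_k^T P_k = I_{k+1}$ (by orthonormality of $V_k$). The key ingredients I will need are the block identities $F^{(k)} = P_k^T F$ and $J^{(k)} = P_k^T J P_k$ at the current iterate (from \eqref{eq:projfunction} and \eqref{eq:vjv}), together with the Generalized Krylov property \eqref{eq:fk2}, which states $F(x_{k-1},\lambda_{k-1}) = P_k F^{(k)}(\bar y_{k-1},\lambda_{k-1})$; equivalently, $F(x_{k-1},\lambda_{k-1})\in\mathcal{R}(P_k)$.

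First I would insert the block identities into the projected Newton equation \eqref{eq:newtoneq} to obtain $P_k^T J(x_{k-1},\lambda_{k-1})\Delta_k = -P_k^T F(x_{k-1},\lambda_{k-1})$. This rearranges to $J\Delta_k + F \in \mathcal{N}(P_k^T) = \mathcal{R}(P_k)^\perp$, so $J\Delta_k = -F + r$ for some $r\perp\mathcal{R}(P_k)$. Since $f = \tfrac{1}{2}\|F\|^2$ gives $\nabla f = J^T F$, this yields
\[
\Delta_k^T \nabla f(x_{k-1},\lambda_{k-1}) = (J\Delta_k)^T F = -\|F\|^2 + r^T F.
\]
Then I would invoke the Generalized Krylov property: since $F(x_{k-1},\lambda_{k-1})\in\mathcal{R}(P_k)$ while $r\in\mathcal{R}(P_k)^\perp$, the cross term $r^T F$ vanishes, leaving $\Delta_k^T \nabla f = -\|F(x_{k-1},\lambda_{k-1})\|^2$. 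Either this is strictly negative and $\Delta_k$ is a descent direction, or $F(x_{k-1},\lambda_{k-1}) = 0$ and the KKT system of \eqref{eq:reform} is already satisfied at $(x_{k-1},\lambda_{k-1})$.

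The main obstacle is the cross term $r^T F$. The Newton equation on its own only forces $r\perp\mathcal{R}(P_k)$, which does not suffice unless we also know $F\in\mathcal{R}(P_k)$. This companion fact is exactly the inclusion $\tilde v_{k-1}\in\mathcal{R}(V_k)$ built into the Generalized Krylov recursion \eqref{eq:gs}--\eqref{eq:genkryl}, so the specific choice of basis is essential here; a generic orthonormal basis for the search subspace would not yield the descent property. Everything else reduces to routine manipulation using $P_k^T P_k = I_{k+1}$, and implicitly I assume $J^{(k)}(\bar y_{k-1},\lambda_{k-1})$ is nonsingular so that $\delta_k$ (and hence $\Delta_k$) is well-defined.
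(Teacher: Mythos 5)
Your proof is correct and follows essentially the same route as the paper's: both reduce $\Delta_k^T\nabla f(x_{k-1},\lambda_{k-1})$ to $-\|F(x_{k-1},\lambda_{k-1})\|^2$ using $\nabla f = J^TF$, the identity $J^{(k)} = P_k^T J P_k$, the projected Newton equation, and the Generalized Krylov inclusion $F(x_{k-1},\lambda_{k-1})\in\mathcal{R}(P_k)$ from \cref{eq:fk2}, and both correctly flag that this inclusion is the step that fails for a generic orthonormal basis. The only difference is bookkeeping: the paper substitutes $F = P_kF^{(k)}$ and collapses the whole computation onto the projected quantities (implicitly invoking symmetry of $J^{(k)}$ when it writes $-(J^{(k)-1}F^{(k)})^TJ^{(k)}F^{(k)} = -\|F^{(k)}\|^2$), whereas you stay in the full space and eliminate the cross term $r^TF$ by orthogonality, which as a small bonus avoids needing that symmetry at all.
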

\begin{proof}
Using \cref{eq:vjv}, which holds for any matrix $V_k$, and \cref{eq:fk2}, which holds for the Generalized Krylov subspace basis $V_k$, together with the definition of the step $\Delta_k$ we have: 
\begin{align*}
&\Delta_{k}^T \nabla f(x_{k-1},\lambda_{k-1}) = \begin{pmatrix} \Delta x_{k} \\ \Delta \lambda_{k} \end{pmatrix}^T J(x_{k-1},\lambda_{k-1})F(x_{k-1},\lambda_{k-1}) \\
=& \begin{pmatrix} V_k\Delta y_{k} \\ \Delta \lambda_{k} \end{pmatrix}^T J(V_{k}\bar{y}_{k-1},\lambda_{k-1})F(V_{k}\bar{y}_{k-1},\lambda_{k-1}) \\
=& \begin{pmatrix} \Delta y_{k} \\ \Delta \lambda_{k} \end{pmatrix}^T \begin{pmatrix} V_{k}^T & 0 \\ 0 & 1 \end{pmatrix} J(V_{k}\bar{y}_{k-1},\lambda_{k-1}) \begin{pmatrix} V_{k} & 0 \\ 0 & 1 \end{pmatrix} F^{(k)}(\bar{y}_{k-1},\lambda_{k-1})  \\
=& \begin{pmatrix} \Delta y_{k} \\ \Delta \lambda_{k} \end{pmatrix}^T J^{(k)}(\bar{y}_{k-1},\lambda_{k-1})  F^{(k)}(\bar{y}_{k-1},\lambda_{k-1})\\
=& -  \left(J^{(k)}(\bar{y}_{k-1},\lambda_{k-1}) ^{-1} F^{(k)}(\bar{y}_{k-1},\lambda_{k-1})\right)^T J^{(k)}(\bar{y}_{k-1},\lambda_{k-1})  F^{(k)}(\bar{y}_{k-1},\lambda_{k-1})\\
=& - ||F^{(k )}(\bar{y}_{k-1},\lambda_{k-1}) ||^2  = - ||F(x_{k-1},\lambda_{k-1}) ||^2  \leq 0. 
\end{align*}
The proof now follows from the fact that $(x_{k-1},\lambda_{k-1})$ is a solution to \cref{eq:reform} if and only if $||F(x_{k-1},\lambda_{k-1}) ||  = 0$.
\end{proof}
The importance of the above theorem is illustrated by the fact that for $\gamma_k$ small enough we have by Taylor's theorem \cite{nocedal2006numerical,boyd2004convex} that
\begin{equation*}
f(x_{k - 1} + \gamma_k \Delta x_k , \lambda_{k-1} + \gamma_k\Delta \lambda_k) \approx f(x_{k-1},\lambda_{k-1}) +\gamma_k\Delta_k^T \nabla f(x_{k-1},\lambda_{k-1})
\end{equation*}
which implies we can find a step-length $\gamma_k$ such that we have a strict decrease in the merit function $f(x_{k},\lambda_{k})<f(x_{k-1},\lambda_{k-1})$. 
In practice, a so-called backtracking line search is often used to find a step-length $\gamma_k > 0$ such that there is a ``sufficient decrease'' of the merit function: 
\begin{equation} \label{eq:sufficient_decrease}
\frac{1}{2} ||F(x_{k},\lambda_{k})||^2 \leq \left(\frac{1}{2} - c\gamma_k \right) ||F(x_{k - 1},\lambda_{k - 1})||^2
\end{equation}
with $c\in (0,1)$. Equation \cref{eq:sufficient_decrease} is often referred to in literature as the sufficient decrease condition or Armijo condition. To find such a step-length, we simply start with $\gamma_k = 1$ and check if the sufficient decrease condition holds. If not, we reduce $\gamma_k$ by a factor $0<\tau<1$ and check if $\gamma_k := \tau \gamma_k$, satisfies the condition. This procedure is then repeated until a suitable step-length is found. 
We obviously do not want to calculate the new norm $F(x_{k},\lambda_{k})$ by performing matrix-vector products with $A$ and $A^T$. This would make the line search expensive if the step-length is reduced multiple times. In \cref{sec:improvementsback} we describe how this line search can be performed efficiently. The basic idea of the proposed method is now given by \cref{alg:PN_basic}.

\begin{algorithm}
\caption{Projected Newton method (basic framework)}
\label{alg:PN_basic} 
\begin{algorithmic}[1] \small
\STATE{Initialize $x_0=0, V_1 = A^Tb/||A^T b||$ and choose $\lambda_0 > 0$. \label{init}}
\FOR{$k = 1,2,\ldots,\text{until convergence}$} 
\STATE{Calculate descent direction $\Delta_k = (\Delta x_k^T,\Delta \lambda_k)^T$ with $\Delta x_k = V_k\Delta y_k$ using \cref{eq:newtoneq}.}
\STATE{Choose step-length $\gamma_k$ such that sufficient decrease condition \cref{eq:sufficient_decrease} is satisfied.}
\STATE{Expand Generalized Krylov subspace basis $V_{k+1} = [V_k,v_k]$ using \cref{eq:vtilde} and \cref{eq:gs}.}
\ENDFOR
\end{algorithmic}
\end{algorithm}

In \cref{sec:num} we comment on the different criteria that can be used to check for convergence. 
The following interesting result will be useful in this discussion.
\begin{lemma}\label{thm:posdis}
For all $k\geq 0$ we have that the iterates $x_k$  generated by \cref{alg:PN_basic} satisfy $||Ax_k - b||\geq \sigma$, which means we never get a residual smaller than what the discrepancy principle dictates.  
\end{lemma}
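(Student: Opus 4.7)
The plan is to proceed by induction on $k$, using the second row of the projected Newton equation to control how $\|Ax_k-b\|^2$ changes along each step.

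For the base case $k=0$, we have $x_0=0$ so $\|Ax_0-b\|=\|b\|>\sigma$ by the standing assumption made right before \cref{eq:reform}. For the inductive step, assume $\|Ax_{k-1}-b\|\geq\sigma$ and recall that $x_k=x_{k-1}+\gamma_k\Delta x_k$ with $\Delta x_k=V_k\Delta y_k$ and $0<\gamma_k\leq 1$ coming from the backtracking line search. Setting $g(x)=\tfrac{1}{2}\|Ax-b\|^2-\tfrac{\sigma^2}{2}$ (the second component of $F$), I would read off the second row of the projected Newton system \cref{eq:newtoneq}, together with the form of $J^{(k)}$ in \cref{eq:jacobian} and the fact that $V_k\bar y_{k-1}=x_{k-1}$, to obtain the key identity
\begin{equation*}
(Ax_{k-1}-b)^T A\Delta x_k \;=\; -g(x_{k-1}) \;=\; \tfrac{\sigma^2}{2}-\tfrac{1}{2}\|Ax_{k-1}-b\|^2.
\end{equation*}

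With this identity in hand, I would expand the square:
\begin{equation*}
\|Ax_k-b\|^2 \;=\; \|Ax_{k-1}-b\|^2 + 2\gamma_k(Ax_{k-1}-b)^T A\Delta x_k + \gamma_k^2 \|A\Delta x_k\|^2,
\end{equation*}
and substitute the identity above to simplify the middle term, yielding
\begin{equation*}
\|Ax_k-b\|^2 \;=\; (1-\gamma_k)\|Ax_{k-1}-b\|^2 + \gamma_k\sigma^2 + \gamma_k^2\|A\Delta x_k\|^2.
\end{equation*}
The conclusion then follows from the induction hypothesis: because $\gamma_k\in(0,1]$ and $\|Ax_{k-1}-b\|^2\geq\sigma^2$, the convex combination $(1-\gamma_k)\|Ax_{k-1}-b\|^2+\gamma_k\sigma^2$ is at least $\sigma^2$, and the remaining $\gamma_k^2\|A\Delta x_k\|^2$ term is nonnegative.

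There is no real obstacle here beyond bookkeeping; the only subtlety worth flagging is the use of $\gamma_k\leq 1$, which is essential for the convex combination estimate and is guaranteed by the backtracking line search that initializes $\gamma_k=1$ and only shrinks it. It is also worth noting that the projected Jacobian must be nonsingular so that $(\Delta y_k,\Delta\lambda_k)$ is well defined; this is an implicit premise for even speaking of the iterate $x_k$ produced by \cref{alg:PN_basic}.
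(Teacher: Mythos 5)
Your proof is correct and follows essentially the same route as the paper: induction on $k$, the identity $(Ax_{k-1}-b)^TA\Delta x_k=\tfrac{\sigma^2}{2}-\tfrac{1}{2}\|Ax_{k-1}-b\|^2$ read off from the last row of \cref{eq:newtoneq}, and an expansion of the square using $0<\gamma_k\leq 1$. The only (cosmetic) difference is that you absorb the induction hypothesis into a convex-combination estimate $(1-\gamma_k)\|Ax_{k-1}-b\|^2+\gamma_k\sigma^2\geq\sigma^2$, whereas the paper first uses the nonpositivity of the cross term to replace $2\gamma_k$ by $2$ and lands directly on $\sigma^2+\gamma_k^2\|A\Delta x_k\|^2$.
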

\begin{proof}
We prove this by induction. For $k=0$ we have by assumption that $||b|| \geq \sigma$. Suppose $k>0$ and that $||Ax_{k-1} - b||=||AV_k\bar{y}_{k-1} - b||\geq \sigma $.  Writing out the last component of the equality \cref{eq:newtoneq} we get
\begin{equation}\label{eq:lastcomp}
(AV_{k}\bar{y}_{k-1} - b)^T AV_{k} \Delta y_{k} = \frac{\sigma^2}{2} - \frac{1}{2} ||AV_{k}\bar{y}_{k-1} - b||^2 \leq 0.
\end{equation}
 Now the proof follows from the following calculation:
\begin{align*} 
&||Ax_k - b||^2 = ||AV_{k}y_{k} - b||^2 = ||AV_{k}(\bar{y}_{k-1} + \gamma_{k} \Delta y_{k}) - b||^2 \\
&= ||AV_{k}\bar{y}_{k-1} - b ||^2 + \gamma_{k}^2 ||AV_{k}\Delta y_{k}||^2 + 2 \gamma_{k} (AV_{k}\bar{y}_{k-1} - b)^T AV_{k} \Delta y_{k} \\
&\geq ||AV_{k}\bar{y}_{k-1} - b ||^2 + \gamma_{k}^2 ||AV_{k}\Delta y_{k}||^2 + 2(AV_{k}\bar{y}_{k-1} - b)^T AV_{k} \Delta y_{k} \\
&= ||AV_{k}\bar{y}_{k-1} - b ||^2 + \gamma_{k}^2 ||AV_{k}\Delta y_{k}||^2 + \sigma^2 - ||AV_{k}\bar{y}_{k-1} - b||^2 \\
&= \sigma^2 + \gamma_{k}^2 ||AV_{k}\Delta y_{k}||^2. 
\end{align*}
The inequality follows from the fact that $(AV_{k}\bar{y}_{k-1} - b)^T AV_{k} \Delta y_{k}\leq 0$ by \cref{eq:lastcomp} and $0 < \gamma_k \leq 1$.
\end{proof}

\subsection{Efficiently computing the projected Newton direction}
In this section we efficiently construct the projected Jacobian \cref{eq:jacobian} and projected function \cref{eq:projfunction} that we need to compute the projected Newton direction \cref{eq:newtoneq}. To do so, we consider the reduced QR decomposition  of the tall and skinny matrix $AV_{k} \in \mathbb{R}^{m \times k}$, which was also the approach taken in \cite{LAMPE20122845,doi:10.1137/140967982}.  
Let $Q_k  \in \mathbb{R}^{m \times k}$ with $Q_k^TQ_k = I_k$  and $R_k \in \mathbb{R}^{k \times k}$ upper triangular such that $AV_k = Q_k R_k$. Using this QR decomposition we can write $V_k^T A^T A V_k =  R_k^T R_k$ and as a consequence we also have
\begin{equation}\label{eq:thisvec}
V_k^T A^T (A V_k y - b) = R_k^T R_k y - V_k^T A^T b = R_k^T R_k y - d_k
\end{equation}
with $d_k = V_k^T A^T b =  ||A^Tb|| e^{(k)}_1$ and $e^{(k)}_1 = (1,0,\ldots,0)^T \in \mathbb{R}^k$. 
The vector \cref{eq:thisvec} is present in both the projected Jacobian \cref{eq:jacobian} and the projected function \cref{eq:projfunction}.
The QR decomposition of $AV_k$ can be efficiently updated in each iteration. For $k=1$ we trivially have $AV_1$ = $Q_1 R_1$ with $R_1 = ||AV_1||$ and $Q_1 = AV_1/R_1$. For $k > 1$ we can write  
\begin{align} \label{eq:updateqr1}
AV_{k} = [AV_{k-1}, Av_{k-1}] = [Q_{k-1} R_{k-1}, Av_{k-1}] 
=\underbrace{[Q_{k-1}, q_{k}]}_{:=Q_{k}} \underbrace{\begin{pmatrix} R_{k-1} & r_{k} \\ 0 & r_{k,k} \end{pmatrix}}_{:=R_{k}}
\end{align} 
\begin{align} \label{eq:updateqrbis}
\text{with} \hspace{0.2cm} r_{k} = Q_{k - 1}^T Av_{k-1},\hspace{0.2cm} \tilde{q}_{k} = Av_{k - 1} - Q_{k-1} r_{k}, \hspace{0.2cm} r_{k,k} = || \tilde{q}_{k} ||  \hspace{0.2cm}\text{and}\hspace{0.2cm}  q_{k} = \tilde{q}_{k}/r_{k,k}.
\end{align}
When $r_{k,k}=0$ we can simply replace $\tilde{q}_{k}$ with any vector that is orthogonal to $Q_{k-1}$ \cite{LAMPE20122845,daniel1976reorthogonalization}. 
The matrix $R_{k}^T R_{k}$ can also be efficiently computed by
\begin{equation} \label{eq:rtr}
R_{k}^T R_{k} = \begin{pmatrix} R^T_{k-1} & 0 \\  r^T_{k} & r_{k,k} \end{pmatrix} \begin{pmatrix} R_{k-1} & r_{k} \\ 0 & r_{k,k} \end{pmatrix} 
= \begin{pmatrix} R^T_{k-1}R_{k-1} & R_{k-1}^T r_{k} \\  r^T_{k} R_{k-1} &r^T_{k}r_{k} + r_{k,k}^2 \end{pmatrix}. 
\end{equation}

Up to this point, we have not used any additional structure of the twice continuously differentiable convex function $\Psi(x)$. In general, calculating the gradient $\nabla \Psi(x)$ or the Hessian $\nabla^2 \Psi(x)$  could be quite computationally expensive. For instance, if we consider $\Psi(x) = \Psi_p(Lx)$ the smooth approximation of $\frac{1}{p}||Lx||_p^{p}$, evaluating this gradient requires a matrix-vector product with both $L$ and $L^T$. Let us consider $\Psi(x) = \tilde{\Psi}(Lx)$ with a matrix $L\in\mathbb{R}^{s \times n}$ and twice continuously differentiable convex function $\tilde{\Psi} :\mathbb{R}^s \longrightarrow \mathbb{R}$. Recall that for the $\ell_p$ norm we take $\tilde{\Psi}(z) = \Psi_p(z)$, while for the general form Tikhonov problem we have $\tilde{\Psi}(z) = \frac{1}{2}||z||_2^2$. Further improvements in the latter case will be presented in \cref{sec:genform}.

In addition to saving the tall and skinny matrices $V_k$ and $AV_k$ we also save the matrix $LV_k \in \mathbb{R}^{s \times k}$.
We introduce recurrences for $z_k := Lx_k$ and $t_k :=Ax_k$, i.e. we get
\begin{align*}
z_k =&~ Lx_k = Lx_{k-1} + \gamma_k L \Delta x_k =  z_{k-1} + \gamma_k LV_k\Delta y_k=  z_{k-1} + \gamma_k \Delta z_k \\
t_k =&~ Ax_k = Ax_{k-1} +\gamma_k A\Delta x_k = t_{k-1} +  \gamma_k AV_k\Delta y_k= t_{k-1} +  \gamma_k \Delta t_k
\end{align*}
with $\Delta z_k = (LV_k)\Delta y_k$ and  $\Delta t_k = (AV_{k})\Delta y_{k}$ computed as tall and skinny matrix vector products and $z_0=t_0 = 0$.
When we consider the case $L=I_n$ we obviously get the simplification $z_k  = x_k$. The Hessian $V_k^T \nabla^2 \Psi(x_{k-1}) V_k$ in the projected Jacobian \cref{eq:jacobian} with $y=\bar{y}_{k-1}$ can be rewritten as
\begin{equation*}
V_k^T \nabla^2 \Psi(x_{k-1}) V_k = (LV_k)^T  \nabla^2 \tilde{\Psi}(z_{k-1}) (LV_k).
\end{equation*}
Remember that for our example $\tilde{\Psi}(z) = \Psi_p(z)$ the matrix $ \nabla^2  \tilde{\Psi}(z_{k-1})$ is diagonal.
To summarize, the linear system \cref{eq:newtoneq} to calculate the Projected Newton direction can be rewritten as 
\begin{align}\label{eq:solve} 
\begin{pmatrix}\lambda_{k-1}R_k^T R_k + (LV_k)^T  \nabla^2 \tilde{\Psi}(z_{k-1}) (LV_k) & R_k^T R_k \bar{y}_{k-1} - d_k \\  \nonumber 
\left(R_k^T R_k \bar{y}_{k-1} - d_k\right)^T & 0  
 \end{pmatrix} \begin{pmatrix} \Delta y_{k}\\ \Delta \lambda_{k} \end{pmatrix} = \hspace{1.5cm} &  \\[2ex]
- \begin{pmatrix}\lambda_{k-1}\left(R_k^T R_k \bar{y}_{k-1} - d_k\right) + (LV_k)^T \nabla \tilde{\Psi}(z_{k-1}) \\ \frac{1}{2}||t_{k-1} - b||^2 - \frac{\sigma^2}{2}& \end{pmatrix}&.
\end{align}

\subsection{Efficiently performing the backtracking line search}\label{sec:improvementsback}

Similarly as before, we now also save the tall and skinny matrix $A^T AV_{k}\in\mathbb{R}^{n\times k}$ and introduce a recurrence relation for $w_k = A^T A x_k $. More specifically, we have
\begin{align*}
w_k =&~ A^T A x_k = A^T A x_{k-1} + \gamma_k A^T A \Delta x_{k} =  w_{k-1} +  \gamma_k \Delta w_k
\end{align*}
where we compute $\Delta w_k = (A^T AV_{k})\Delta y_{k}$ using a tall and skinny matrix-vector product and initialize $w_0 = 0$. Note that we need to perform only a tall and skinny matrix-vector product with $LV_k,AV_k$ and $A^T A V_k$ and that we then can compute $z_k,t_k$ and $w_k$ for many different values of the step-length $\gamma_k$ with only vector additions. This is very useful for efficiently performing the backtracking line search.
We calculate the gradient in \cref{eq:bigf} using only a matrix-vector product with $L^T$ (no matrix-vector product with $L$ anymore), i.e. we have: 
\begin{equation*}
\nabla \Psi (x_{k}) = L^T \nabla \tilde{\Psi}(z_k).
\end{equation*}
The above definitions now allow us to efficiently perform the backtracking line search. Indeed,  using the definition for $t_k, w_k$ and $z_k$ we can write
\begin{equation} \label{eq:calcf2}
F(x_k,\lambda_k) = \begin{pmatrix} \lambda_k \left(w_k - A^T b \right) + L^T \nabla \tilde{\Psi}(z_k)\\
\frac{1}{2}||t_k - b||^2 - \frac{\sigma^2}{2}  \end{pmatrix}.
\end{equation}
Hence, no additional matrix-vector products with $A, A^T$ or $L$ are needed to compute this vector, only one matrix-vector product with $L^T$ each time the step-length is reduced. Note that $\tilde{v}_k$, as defined in \cref{eq:vtilde}, is the first component of this function, so we do not need to perform any additional calculations to obtain this vector. Using the above, we can now formulate the Projected Newton method for solving \cref{eq:reform} with $\Psi(x) = \tilde{\Psi}(Lx)$, see \cref{alg:PNTM}. 

\begin{algorithm}
\caption{Projected Newton method for $\Psi(x) = \tilde{\Psi}(Lx)$}
\label{alg:PNTM} 
\begin{algorithmic}[1] \small
\STATE{ $\bar{y}_0 = 0$; $z_0 = 0$; $t_0 = 0$; $w_0 = 0$; $\tau = 0.9$; $c = 10^{-4}$;\hfill $\#$ Initializations}
\STATE{ $AV_0 =  A^T A V_0 = LV_0 = \emptyset;$}
\STATE{ $\tilde{v}_0 = A^Tb$; $d_1 = ||\tilde{v}_0 ||;$ $v_0 =\tilde{v}_0 /d_1$; $V_1 = v_0$;   \hfill $\#$ Matvec with $A^T$}
\STATE{ $F_0 = \left(-\lambda_0 \tilde{v}^T_0, \frac{1}{2}||b||^2 - \frac{\sigma^2}{2}\right)^T; $\hfill $\#$ Compute $F_0 = F(x_0,\lambda_0)$} 
\FOR{$k = 1,2,\ldots,\text{until convergence}$} 
\STATE{$AV_{k} = [AV_{k-1}, Av_{k-1}]$; $A^TAV_{k} = [A^TAV_{k-1}, A^T(Av_{k-1})]$; \label{matvec}\hfill $\#$ Matvec with $A$ and $A^T$}
\STATE{$LV_{k} = [LV_{k-1}, Lv_{k-1}]$; \hfill $\#$ Matvec with $L$}
\STATE{Calculate $Q_{k},R_{k}$ and $R_{k}^T R_{k}$ by \cref{eq:updateqr1}-\cref{eq:rtr} \label{lineqr}\hfill $\#$ QR decomposition $AV_k$}
\STATE{ Calculate $\Delta y_k$ and $\Delta \lambda_k$ by solving \cref{eq:solve} \label{linesolve} \hfill  $\#$ Projected Newton direction}
\STATE{$\lambda_{k} = \lambda_{k-1} + \Delta \lambda_{k};$}  
\STATE{ \textbf{if} $\lambda_k > 0$ \textbf{ then } $\gamma_k=1;$ \textbf{ else } $\gamma_k = -\tau\lambda_{k-1} / \Delta \lambda_{k}; \lambda_{k} = \lambda_{k-1} + \gamma_k \Delta \lambda_{k};$  \textbf{ end if } \label{poslam}} 
\STATE{ $\Delta z_k = (LV_k) \Delta y_k;$ $\Delta t_k = (AV_k) \Delta y_k;$ $\Delta w_k = (A^TAV_k) \Delta y_k;$ \label{tallskinny}}
\STATE{$(y^T_{k},z^T_k,t^T_k,w^T_k) = (\bar{y}^T_{k-1},z^T_{k-1},t^T_{k-1},w^T_{k-1}) + \gamma_k (\Delta y^T_{k}, \Delta z^T_k,\Delta t^T_k,\Delta w^T_k);$}
\STATE{$\tilde{v}_{k} = \lambda_k\left(w_k - \tilde{v}_0 \right) + L^T \nabla \tilde{\Psi} (z_{k});$ \label{vk1} \hfill $\#$ Matvec with $L^T$  }
\STATE{ $F_k = \left(\tilde{v}_k^T,\frac{1}{2}||t_k - b||^2 - \frac{\sigma^2}{2}\right)^T;$ \hfill $\#$ Compute $F_k = F(x_k,\lambda_k)$}
\STATE {\textbf{while }$\frac{1}{2}||F_k||^2 \geq (\frac{1}{2} - c\gamma_k) || F_{k-1}|| ^2$  \label{lineback} \textbf{ do } \hfill $\#$Backtracking line search}
\STATE{\hspace{0.1cm} $\gamma_k = \tau \gamma_{k}$;}
\STATE{\hspace{0.1cm} $(y^T_{k},z^T_k,t^T_k,w^T_k,\lambda_k) = (\bar{y}^T_{k-1},z^T_{k-1},t^T_{k-1},w^T_{k-1},\lambda_{k-1}) + \gamma_k (\Delta y^T_{k}, \Delta z^T_k,\Delta t^T_k,\Delta w^T_k,\Delta \lambda_k);$}
\STATE{\hspace{0.1cm} $\tilde{v}_{k} = \lambda_k\left(w_k - \tilde{v}_0 \right) + L^T \nabla \tilde{\Psi} (z_{k});$ \label{vk2} \hfill $\#$ Matvec with $L^T$  }
\STATE{ \hspace{0.1cm} $F_k = \left(\tilde{v}_k^T,\frac{1}{2}||t_k - b||^2 - \frac{\sigma^2}{2}\right)^T;$ \hfill $\#$ Compute $F_k = F(x_k,\lambda_k)$}
\STATE{ \textbf{end while}}
\STATE{$\bar{y}_{k} = (y_{k}^T,0)^T;$ $d_{k+1}= (d_k^T,0)^T;$}
\STATE{$v_{k} = \tilde{v}_{k} - \sum_{j=0}^{k-1} (v_{j}^T \tilde{v}_{k}) v_{j};$ \label{gsorth}\hfill $\#$ Gram-Schmidt}
\STATE{$v_{k} = v_k/||v_k||;$ $V_{k + 1} = [V_k,v_{k}]$ \hfill $\#$ Normalize and add new vector to basis}
\ENDFOR
\STATE{$x_k = V_k y_k;$}
\end{algorithmic}
\end{algorithm}

\subsection{Computational cost of the Projected Newton method} \label{sec:flops}
Let us comment on the computational cost of the Projected Newton method as implemented in \cref{alg:PNTM}. Suppose we are at iteration $k$ in the algorithm. First, on line \ref{matvec}, we need to perform a matrix vector product with $A$ and $A^T$. The standard multiplication of a dense $m \times n$ matrix with a vector takes $2mn - m$ floating point operations (\textsc{flop}s). However, we are mainly interested in applications with a sparse matrix $A$. The number of \textsc{flop}s for a sparse matrix-vector multiplication with $A$ is much lower, namely $\mathcal{O}(\mu_A n)$ where $\mu_A$ is the average number of non-zero entries per row of $A$. Let us denote $\textsc{flop}s(A)$ the number of \textsc{flop}s required for a matrix vector product with $A$ and likewise we denote $\textsc{flop}s(A^T),\textsc{flop}s(L)$ and $\textsc{flop}s(L^T)$ the number \textsc{flop}s required for a matrix vector product $A^T,L$ and $L^T$ respectively. The total number of \textsc{flop}s required for these matrix vector products in iteration $k$ is then given by
\begin{equation}
\textsc{flop}s(A) + \textsc{flop}s(A^T) + \textsc{flop}s(L) + (\# \text{red}_k)\textsc{flop}s(L^T),
\end{equation}
where $\#\text{red}_k$ denotes the number of times the step-length is reduced in the backtracking line search (see line \ref{lineback}). 

Next, let us consider the computational cost of updating the QR decomposition of $AV_k$ by \cref{eq:updateqr1}-\cref{eq:rtr}. Calculating the vector $r_{k}$ in \cref{eq:updateqrbis} can be seen as a dense matrix vector product with $Q_{k-1}^T\in\mathbb{R}^{(k-1) \times m}$ and hence requires $2(k-1)m - (k-1)\approx 2mk$ \textsc{flop}s. The  number of \textsc{flop}s for calculating $\tilde{q}_k$ is also approximately $2mk$ and thus we have that the total number of \textsc{flop}s for updating the QR decomposition on line \ref{lineqr} is approximately $4mk$. Here and it what follows we disregard all terms that do not have a factor $m,n$ or $s$. In addition, we also disregard all terms that do not depend on the iteration index $k$, with the obvious exception of the matrix vector products with $A,A^T,L$ and $L^T$. 

The cost of forming the right hand side in \cref{eq:solve} is dominated by the tall and skinny matrix vector product $(LV_k)^T \nabla \tilde{\Psi}(z_{k-1})$ and takes approximately $2sk$ \textsc{flop}s. The cost of forming the projected Jacobian in \cref{eq:solve} is dominated by the construction of $(LV_k)^T  \nabla^2 \tilde{\Psi}(z_{k-1}) (LV_k)$. Multiplication of the diagonal matrix $\nabla^2 \tilde{\Psi}(z_{k-1})$ with $LV_k$ requires $sk$ \textsc{flop}s. Due to the symmetry of the matrix $(LV_k)^T  \nabla^2 \tilde{\Psi}(z_{k-1}) (LV_k)$ we only need to compute $k(k+1)/2$ entries, which all can be computed as an inner-product between two vectors of length $s$. Recall that such an inner product takes $2s-1$ \textsc{flops}. This which means that the total cost of forming this matrix is $k(k+1)(2s-1)/2 + sk \approx sk^2 + 2sk$ \textsc{flop}s.

On line \ref{tallskinny} we compute tall and skinny matrix vector products with $LV_k\in\mathbb{R}^{s\times k}$,$AV_k\in\mathbb{R}^{m\times k}$ and $A^TAV_k\in\mathbb{R}^{n\times k}$, which approximately requires $2sk + 2mk + 2nk$ \textsc{flop}s. Lastly, we consider the orthogonalization step on line \ref{gsorth}. Calculating the $k$ inner products with vectors of length $n$ takes $(2n-1)k$ \textsc{flops} and the vector scaling and subtractions take another $2nk$ \textsc{flops}. Hence, the orthogonalization step approximately requires $4nk$ \textsc{flop}s in total. To summarize, the number of \textsc{flops} required in iteration $k$ of \cref{alg:PNTM} is approximately equal to
\begin{equation}\label{eq:flops}
\textsc{flop}s(A) + \textsc{flop}s(A^T) + \textsc{flop}s(L) + (\# \text{red}_k)\textsc{flop}s(L^T) + 6(s + n + m)k + sk^2. 
\end{equation}
From this expression it is clear that the algorithm should terminate after a relatively small amount of iterations $k$ if we want to get good performance, since otherwise the term $sk^2$ will become very large. In \cref{sec:stopping} we discuss possible stopping criteria and thresholds that can be used in practice to obtain good performance. 

For large sparse matrices $A$ and $L$, we have that the dominant cost of \cref{alg:PNTM} in the first few iterations is given by the number of matrix vector product with $A$, $A^T$, $L$ and $L^T$. However, after a few iterations, the term $sk^2$ will quickly start to dominate the computational cost. When precisely this term starts to dominate obviously depends on the sparsity of the matrices $A$ and $L$ and the dimensions $m,n$ and $s$. The sparser the matrices, the sooner the term $sk^2$ starts to dominate. For large dense matrices we have that the total number of \textsc{flop}s remains dominated by the cost of the matrix vector products as long as $k^2\ll \min\{m,n,s\}.$ 

\subsection{Nonsingularity of the projected Jacobian} \label{sec:nonsing}
In this section we describe a strategy that guarantees that the projected Jacobian \cref{eq:jacobian} in \cref{alg:PNTM} is nonsingular for all iterations $k$.  
Note that line \ref{poslam} has been added in \cref{alg:PNTM} to make sure the regularization parameter $\lambda_k$ remains strictly positive. First of all we know that the Lagrange multiplier $\lambda^*$ corresponding to the solution $x^{*}$ of \cref{eq:reform} has to be strictly positive so it is natural to require that the iterates also remain strictly positive. More importantly, the following lemma holds for a strictly positive Lagrange multiplier $\lambda_k$.
\begin{lemma} \label{thm:nonsingularity}
Let $\lambda_k > 0$. The projected Jacobian $J^{(k + 1)}(\bar{y}_{k},\lambda_k)$ is nonsingular if and only if 
\begin{equation}
V_{k+1}^T A^T (AV_{k+1}\bar{y}_k - b) \neq 0. 
\end{equation} 
\end{lemma}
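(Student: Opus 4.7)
The projected Jacobian has the bordered (saddle-point) structure
\[
J^{(k+1)}(\bar{y}_k,\lambda_k) = \begin{pmatrix} H & g \\ g^T & 0 \end{pmatrix},
\]
where $H := \lambda_k V_{k+1}^T A^T A V_{k+1} + V_{k+1}^T \nabla^2 \Psi(V_{k+1}\bar{y}_k) V_{k+1}$ is symmetric and $g := V_{k+1}^T A^T(AV_{k+1}\bar{y}_k - b)$. The plan is to reduce the statement to a standard fact about bordered matrices with a positive definite leading block, combined with a dimension count on $\mathcal{R}(V_{k+1})$ that rules out a nontrivial kernel of $H$.

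For the trivial direction, I would argue the contrapositive: if $g = 0$, then the last row of $J^{(k+1)}$ is the zero vector, so the Jacobian is singular. For the substantive direction, I would first establish that $H$ is positive definite. Given $y\in\mathbb{R}^{k+1}$, write $y^T H y = \lambda_k\|AV_{k+1}y\|^2 + y^T V_{k+1}^T\nabla^2\Psi(V_{k+1}\bar{y}_k)V_{k+1}y$; since $\lambda_k>0$ and $\nabla^2\Psi$ is PSD by convexity, both terms are nonnegative. Using $\Psi(x)=\tilde{\Psi}(Lx)$ and \cref{thm:chain}, $\nabla^2\Psi(x)=L^T\nabla^2\tilde{\Psi}(Lx)L$, and $\nabla^2\tilde{\Psi}$ is PD because $\tilde\Psi$ is strictly convex (for $\tilde\Psi=\Psi_p$ this is the inequality just above \cref{thm:chain}; for $\tilde\Psi(z)=\frac12\|z\|_2^2$ it is immediate). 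Hence $y^THy=0$ forces $AV_{k+1}y=0$ and $LV_{k+1}y=0$, so $V_{k+1}y\in\mathcal{N}(A)\cap\mathcal{N}(L)=\{0\}$, and orthonormality of the columns of $V_{k+1}$ then gives $y=0$.

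With $H\succ 0$, the usual Schur-complement factorization
\[
\begin{pmatrix} H & g \\ g^T & 0 \end{pmatrix} = \begin{pmatrix} I & 0 \\ g^T H^{-1} & 1 \end{pmatrix}\begin{pmatrix} H & g \\ 0 & -g^T H^{-1} g \end{pmatrix}
\]
yields $\det J^{(k+1)}(\bar{y}_k,\lambda_k) = -\det(H)\,g^T H^{-1} g$. Because $H^{-1}$ is positive definite, $g^T H^{-1} g > 0$ precisely when $g\neq 0$, which together with $\det(H)>0$ gives nonsingularity of the Jacobian iff $g\neq 0$.

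The main obstacle is the positive definiteness of $H$: this is the only place where the structural assumption $\mathcal{N}(A)\cap\mathcal{N}(L)=\{0\}$ (which the paper has already invoked for the uniqueness result \cref{thm:unique}) is essential. If that intersection were nontrivial, $H$ could be merely PSD and the Schur-complement argument would break down. Every other step is routine linear algebra, so I would expect the proof in the paper to be short and to hinge on exactly this nondegeneracy argument.
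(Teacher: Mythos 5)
Your proof follows essentially the same route as the paper's: the identical bordered/Schur-complement factorization yielding $\det J^{(k+1)}(\bar{y}_k,\lambda_k) = -\det(H)\, g^T H^{-1} g$, with nonsingularity reduced to $g \neq 0$ via positive definiteness of $H$ and hence of $H^{-1}$. The one substantive difference is that you actually justify the positive definiteness of $H$ --- correctly observing that for a general regularization matrix $L$ this hinges on $\mathcal{N}(A)\cap\mathcal{N}(L)=\{0\}$ (or on $\nabla^2\tilde{\Psi}$ being positive definite together with $L=I_n$) --- whereas the paper simply asserts it from $\lambda_k>0$; your added care is warranted, since without that nondegeneracy condition $H$ is only guaranteed to be positive semidefinite and the Schur-complement step would break down.
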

\begin{proof}
Let us denote $M_{k+1} = \lambda_k V_{k+1}^T A^T A V_{k+1} + V_{k+1}^T\nabla^2 \Psi(V_{k+1}\bar{y}_k) V_{k+1}$. 
Due to the $2\times 2$ block structure of the projected Jacobian
\begin{equation*}
J^{(k+1)}(\bar{y}_k,\lambda_k)  = \begin{pmatrix} M_{k+1} & V_{k+1}^T A^T (AV_{k+1}\bar{y}_{k+1} - b) \\  (AV_{k+1}\bar{y}_{k+1} - b)^TAV_{k+1} & 0 \end{pmatrix}
\end{equation*}
we have the following expression for the determinant \cite{zhang2006schur}:
\begin{equation*}
\det J^{(k+1)}(\bar{y}_k,\lambda_k) = - (AV_{k+1}\bar{y}_{k} - b)^TAV_{k+1}M_{k+1}^{-1}V_{k+1}^T A^T (AV_{k+1}\bar{y}_{k} - b) \det M_{k+1}. 
\end{equation*}
Since $\lambda_k>0$ we know that the matrix $M_{k+1}$ is positive definite and as a consequence we have that $\det M_{k+1} \neq 0$. Moreover, since $M_{k+1}^{-1}$ is also positive definite we know that 
\begin{equation*}
 (AV_{k+1}\bar{y}_{k} - b)^TAV_{k+1}M_{k+1}^{-1}V_{k+1}^T A^T (AV_{k+1}\bar{y}_{k} - b) = 0 \Leftrightarrow V_{k+1}A^T (AV_{k+1}\bar{y}_{k} - b) = 0. 
\end{equation*}
This concludes the proof. 
\end{proof}

Using the above lemma we can show that we can always find a suitable step-length such that the Jacobian matrices in \cref{alg:PNTM} remain nonsingular. 
\begin{lemma}
Suppose that the projected Jacobian $J^{(k)}(\bar{y}_{k-1},\lambda_{k-1})$ in iteration $k$ is nonsingular and $\lambda_{k-1}>0$. Then we can always find a step-length $\gamma_k$ such that the sufficient decrease condition \cref{eq:sufficient_decrease} is satisfied and such that $J^{(k+1)}(\bar{y}_k,\lambda_k)$ is nonsingular. 
\end{lemma}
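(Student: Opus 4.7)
\emph{Proof proposal.} The strategy is to show that both requirements hold whenever $\gamma_k$ is chosen sufficiently small, and that the backtracking loop will reach such a value.

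First I would invoke \cref{thm:descent}: either $(x_{k-1},\lambda_{k-1})$ already solves \cref{eq:reform} (and the claim is vacuous), or the step $\Delta_k = (\Delta x_k^T,\Delta\lambda_k)^T$ is a strict descent direction for the merit function $f(x,\lambda) = \frac{1}{2}||F(x,\lambda)||^2$ at $(x_{k-1},\lambda_{k-1})$. Classical Armijo theory then yields a threshold $\gamma_{\mathrm{SD}}^* > 0$ such that \cref{eq:sufficient_decrease} holds for every $\gamma_k \in (0,\gamma_{\mathrm{SD}}^*]$. I would also verify that the positivity safeguard on line \ref{poslam} together with the backtracking update $\gamma_k \leftarrow \tau\gamma_k$ preserves $\lambda_k > 0$: a short case split on the sign of $\Delta\lambda_k$ shows that if $\lambda_{k-1}+\gamma_k\Delta\lambda_k > 0$ for the initial $\gamma_k$, the same holds when $\gamma_k$ is shrunk by any factor $\tau\in(0,1)$.

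The main obstacle is nonsingularity of the next projected Jacobian $J^{(k+1)}(\bar{y}_k,\lambda_k)$. By \cref{thm:nonsingularity} this is equivalent to $V_{k+1}^T A^T(Ax_k - b) \neq 0$, using that $V_{k+1}\bar{y}_k = x_k$. The key remark is that the leading $k$ components of this vector are $V_k^T A^T(Ax_k - b) = V_k^T A^T(Ax_{k-1} - b) + \gamma_k(V_k^T A^T A V_k)\Delta y_k$, which depends affinely, hence continuously, on $\gamma_k$. The hypothesis that $J^{(k)}(\bar{y}_{k-1},\lambda_{k-1})$ is nonsingular, together with \cref{thm:nonsingularity} applied one index earlier, says exactly that $V_k^T A^T(Ax_{k-1} - b) \neq 0$. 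By continuity there exists $\gamma_{\mathrm{NS}}^* > 0$ such that this leading block remains nonzero for every $\gamma_k \in (0,\gamma_{\mathrm{NS}}^*]$, and this already forces the full vector $V_{k+1}^T A^T(Ax_k - b)$ to be nonzero regardless of the particular $v_k$ produced by the Gram--Schmidt step.

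To finish, any $\gamma_k \in (0,\min\{\gamma_{\mathrm{SD}}^*,\gamma_{\mathrm{NS}}^*\}]$ meets both requirements simultaneously, and the geometric backtracking rule $\gamma_k \leftarrow \tau\gamma_k$ reaches this interval after finitely many reductions. The delicate part is precisely the continuity step for the nonsingularity condition; the argument above bypasses any potentially pathological behaviour of the normalization of $v_k$ by isolating the leading $k$-dimensional block of $V_{k+1}^T A^T(Ax_k - b)$, whose nonvanishing is inherited directly from the assumption at iteration $k$.
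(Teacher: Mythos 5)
Your proposal is correct and follows essentially the same route as the paper's proof: obtain one threshold from \cref{thm:descent} for the Armijo condition, one for positivity of $\lambda_k$, and one by continuity for the nonsingularity condition, using \cref{thm:nonsingularity} at index $k$ to see that the leading $k$-dimensional block $V_k^T A^T(Ax_{k-1}-b)$ is nonzero and hence remains nonzero for small $\gamma_k$. The only cosmetic difference is that the paper phrases this block as $R_k^T R_k(\bar{y}_{k-1}+\gamma\Delta y_k)-d_k$ via the QR factorization of $AV_k$, which is identical to your expression $V_k^T A^T(Ax_{k-1}-b)+\gamma_k(V_k^TA^TAV_k)\Delta y_k$.
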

\begin{proof}
We know by \cref{thm:descent} that there exists a step-length $\gamma^{(1)}>0$ such that the sufficient decrease condition \cref{eq:sufficient_decrease} is satisfied for all $\gamma \in ]0,\gamma^{(1)}]$. Since $\lambda_{k-1} > 0$ there also exists a step-length $\gamma^{(2)}>0$ such that $\lambda_{k} = \lambda_{k-1} + \gamma \Delta \lambda_k>0$ for all $\gamma \in ]0,\gamma^{(2)}]$. 
Now let $\bar{y}_{k} = (y^T_{k},0)^T$ with $y_{k} = \bar{y}_{k-1} + \gamma \Delta y_k$. The first $k$ elements of the vector 
\begin{equation}
V_{k+1}A^T (AV_{k+1}\bar{y}_{k} - b) = R_{k+1}^T R_{k+1}\bar{y}_k - d_{k+1}.
\end{equation}
are given by the vector $R_{k}^T R_{k} y_k - d_k = R_{k}^T R_{k} (\bar{y}_{k-1} + \gamma \Delta y_k) - d_k$, see \cref{eq:rtr}. Since  $J^{(k)}(\bar{y}_{k-1},\lambda_{k-1})$ is nonsingular it follows from \cref{thm:nonsingularity} that $R_{k}^T R_{k}\bar{y}_{k-1} - d_k\neq 0$ and as a consequence there exist a $\gamma^{(3)}$ such that $V_{k+1}A^T (AV_{k+1}\bar{y}_{k} - b) \neq 0$ for all   $\gamma \in ]0,\gamma^{(3)}]$. Now it suffices to take $\gamma_k \in]0,\min\{\gamma^{(1)},\gamma^{(2)},\gamma^{(3)}\}]$.
\end{proof}
\begin{remark}
We could easily modify the backtracking line-search on line \ref{lineback} of \cref{alg:PNTM} to ensure that $||R_{k}^T R_{k} y_k - d_k||>0$, which in its turn would ensure that the Jacobian $J^{(k+1)}(\bar{y}_k,\lambda_k)$ is nonsingular. However, in practice, we have never encountered a case where this value becomes small, so we do not include it in the description of the algorithm. 
\end{remark}

\subsection{Further improvements for general form Tikhonov regularization} \label{sec:genform}
If we choose $\Psi(x) = \frac{1}{2}||Lx||^2_2$ with $L\in\mathbb{R}^{s\times n}$ (or equivalently $\tilde{\Psi}(z) = \frac{1}{2}||z||^2$) in \cref{eq:reform} we recover the general form Tikhonov problem
\begin{equation} \label{eq:reform2}
\min_{x\in\mathbb{R}^n} \hspace{0.2cm} \frac{1}{2}||Lx||^{2}_2 \hspace{1cm} \text{subject to} \hspace{1cm} \frac{1}{2} ||Ax - b||^{2}_2 = \frac{\sigma^2}{2}
\end{equation}
and \cref{alg:PNTM} can be improved even further. First we observe that $\nabla \Psi(x) = L^T L x$ and $\nabla^2 \Psi(x) = L^T L$ (or equivalently $\nabla \tilde{\Psi}(z) = z$ and $\nabla^2 \tilde{\Psi}(z) = I_s$). By considering a reduced QR decomposition of the tall and skinny matrix $LV_k \in \mathbb{R}^{s\times k}$ and also saving the matrix $L^T LV_k \in \mathbb{R}^{n\times k}$ we can reorganize the algorithm in such a way that we only need a single matrix-vector product with $L^T$ each iteration, instead of one for each time we need to compute $F(x_k,\lambda_k)$ for the backtracking line search. This leads to an improvement when the step-length $\gamma_k$ has to be reduced  multiple times before the sufficient decrease condition is satisfied. Moreover, the QR decomposition of $LV_k$ also allows us to remove the term $sk^2$ in \cref{eq:flops} due to the construction of the projected Jacobian, which gives us a significant improvement over the implementation in \cref{alg:PNTM}.

Let $\tilde{Q}_k \in \mathbb{R}^{s \times k}$ with orthonormal columns and $\tilde{R}_k \in\mathbb{R}^{k\times k}$ upper-triangular, such that the reduced QR decomposition of $LV_k$ is given by $LV_k = \tilde{Q}_k \tilde{R}_k.$ The Hessian for the regularization term in \cref{eq:solve} can be rewritten as
\begin{equation*}
 (LV_k)^T  \nabla^2 \tilde{\Psi}(z_{k-1}) (LV_k) = (LV_k)^T (LV_k) = \tilde{R}_k^T \tilde{R}_k.
\end{equation*}
Similarly we have for the gradient in the right-hand side of \cref{eq:solve} the following simplification
\begin{equation*}
(LV_k)^T \nabla \tilde{\Psi}(z_{k-1}) = (LV_k)^T L x_{k-1} = (LV_k)^T L V_k \bar{y}_{k-1} = \tilde{R}_k^T \tilde{R}_k \bar{y}_{k-1}.
\end{equation*}
When the basis $V_k$ is expanded, we can efficiently update the QR decomposition of $LV_k$, similarly as in \cref{eq:updateqr1} - \cref{eq:rtr}.
Next, let us define a new auxiliary vector $u_k = L^T L x_k$. We again have a recurrence for this variable:
\begin{equation*}
u_k =~ L^T L x_k = L^T L x_{k-1} + \gamma_k L^T L \Delta x_{k} = u_{k-1} + \gamma_k L^T L V_{k} \Delta y_{k} =  u_{k-1} +  \gamma_k \Delta u_k
\end{equation*}
where we compute $\Delta u_k = (L^T L V_{k}) \Delta y_{k}$ as a tall and skinny matrix-vector product and initialize $u_0 = 0$. Now we can remove the matrix-vector product with $L^T$ from the backtracking line search. Indeed, we simply replace the term $L^T \nabla \tilde{\Psi}(z_k)$ with $u_k$, since $\nabla \tilde{\Psi}(z_k) = z_k = Lx_k$. Note that due to these simplifications, there is no need for the auxiliary variable $z_k$ anymore. Hence, we also do not need the tall and skinny matrix-vector product $\Delta z_k = (LV_k) \Delta y_k$ on line $\ref{tallskinny}$ in \cref{alg:PNTM}. 
To summarize, we construct and save the matrix $L^T L V_{k}$, which requires one matrix-vector product with $L^T$. We remove the auxiliary variable $z_k$ and add the new variable $u_k$. We replace the tall and skinny matrix-vector product for $\Delta z_k$  with $\Delta u_k = (L^T L V_{k}) \Delta y_{k}$ on line \ref{tallskinny}. We replace the construction of $\tilde{v}_k$  on lines \ref{vk1} and \ref{vk2} with 
\begin{equation*}
\tilde{v}_k =  \lambda_k\left(w_k - \tilde{v}_0 \right) + u_k
\end{equation*}
such that there is no need anymore to compute it using a matrix-vector product with $L^T$. Lastly, we compute the reduced QR decomposition of $LV_k$ and instead of calculating the Projected Newton direction on line \ref{linesolve} using \cref{eq:solve} we now calculate it as
\begin{align}\label{eq:solve_gen}
&\begin{pmatrix}\lambda_{k-1}R_k^T R_k + \tilde{R}_k^T \tilde{R}_k& R_k^T R_k \bar{y}_{k-1} - d_k \\ 
\left(R_k^T R_k \bar{y}_{k-1} - d_k\right)^T & 0  
 \end{pmatrix} \begin{pmatrix} \Delta y_{k}\\ \Delta \lambda_{k} \end{pmatrix} =  \nonumber \\
 & \hspace{5cm}-\begin{pmatrix}\lambda_{k-1}\left(R_k^T R_k \bar{y}_{k-1} - d_k\right) + \tilde{R}_k^T \tilde{R}_k \bar{y}_{k-1}\\ \frac{1}{2}||t_k - b||^2 - \frac{\sigma^2}{2}& \end{pmatrix}.
\end{align}

See \cref{alg:PNTM_gen} for a detailed description of the method. The most important difference with the (more general) implementation given by \cref{alg:PNTM} is that there is no more matrix-vector product with $L^T$ present in the backtracking line search and that the construction of the matrix  $(LV_k)^T  \nabla^2 \tilde{\Psi}(z_{k-1}) (LV_k)$ in \cref{eq:solve} has been replaced by $\tilde{R}_k^T\tilde{R}_k$. Recall that the term $sk^2$ in \cref{eq:flops} was due to the construction of this matrix. Updating the QR decomposition of $LV_k$ is much cheaper, only approximately $4sk$ \textsc{flop}s, so this is a significant improvement.  

\begin{algorithm}
\caption{Projected Newton method for general form Tikhonov regularization}
\label{alg:PNTM_gen} 
\begin{algorithmic}[1] \small
\STATE{ $\bar{y}_0 = 0$; $u_0 = 0$; $t_0 = 0$; $w_0 = 0$; $\tau = 0.9$; $c = 10^{-4}$;\hfill $\#$ Initializations}
\STATE{ $AV_0 =  A^T A V_0 = LV_0 = L^T L V_0 = \emptyset;$}
\STATE{ $\tilde{v}_0 = A^Tb$; $d_1 = ||\tilde{v}_0 ||;$ $v_0 =\tilde{v}_0 /d_1$; $V_1 = v_0$;   \hfill $\#$ Matvec with $A^T$}
\STATE{ $F_0 = \left(-\lambda_0 \tilde{v}^T_0, \frac{1}{2}||b||^2 - \frac{\sigma^2}{2}\right)^T; $\hfill $\#$ Compute $F_0 = F(x_0,\lambda_0)$} 
\FOR{$k = 1,2,\ldots,\text{until convergence}$} 
\STATE{$AV_{k} = [AV_{k-1}, Av_{k-1}]$; $A^TAV_{k} = [A^TAV_{k-1}, A^T(Av_{k-1})]$; \hfill $\#$ Matvec with $A$ and $A^T$}
\STATE{$LV_{k} = [LV_{k-1}, Lv_{k-1}]$; $L^TLV_{k} = [L^TLV_{k-1}, L^T(Lv_{k-1})]$; \hfill $\#$ Matvec with $L$ and $L^T$}
\STATE{Calculate $Q_{k},R_{k}$ and $R_{k}^T R_{k}$ by \cref{eq:updateqr1}-\cref{eq:rtr} \hfill $\#$ QR decomposition $AV_k$}
\STATE{Calculate $\tilde{Q}_{k},\tilde{R}_{k}$ and $\tilde{R}_{k}^T \tilde{R}_{k}$ (similarly) \hfill $\#$ QR decomposition $LV_k$}
\STATE{ Calculate $\Delta y_k$ and $\Delta \lambda_k$ by solving \cref{eq:solve_gen}\hfill  $\#$ Projected Newton direction}
\STATE{$\lambda_{k} = \lambda_{k-1} + \Delta \lambda_{k};$}  
\STATE{ \textbf{if} $\lambda_k > 0$ \textbf{ then } $\gamma_k=1;$ \textbf{ else } $\gamma_k = -\tau\lambda_{k-1} / \Delta \lambda_{k}; \lambda_{k} = \lambda_{k-1} + \gamma_k \Delta \lambda_{k};$  \textbf{ end if }} 
\STATE{ $\Delta u_k = (L^TLV_k) \Delta y_k;$ $\Delta t_k = (AV_k) \Delta y_k;$ $\Delta w_k = (A^TAV_k) \Delta y_k;$ \label{tallskinny2}}
\STATE{$(y^T_{k},u^T_k,t^T_k,w^T_k) = (\bar{y}^T_{k-1},u^T_{k-1},t^T_{k-1},w^T_{k-1}) + \gamma_k (\Delta y^T_{k}, \Delta u^T_k,\Delta t^T_k,\Delta w^T_k);$}
\STATE{$\tilde{v}_{k} = \lambda_k\left(w_k - \tilde{v}_0 \right) + u_k$;}
\STATE{$F_k = \left(\tilde{v}_k^T,\frac{1}{2}||t_k - b||^2 - \frac{\sigma^2}{2}\right)^T;$ \hfill $\#$ Compute $F_k = F(x_k,\lambda_k)$}
\STATE {\textbf{while }$\frac{1}{2}||F_k||^2 \geq (\frac{1}{2} - c\gamma_k) || F_{k-1}|| ^2$ \textbf{ do } \hfill $\#$Backtracking line search}
\STATE{\hspace{0.1cm} $\gamma_k = \tau \gamma_{k}$;}
\STATE{\hspace{0.1cm} $(y^T_{k},u^T_k,t^T_k,w^T_k,\lambda_k) = (\bar{y}^T_{k-1},u^T_{k-1},t^T_{k-1},w^T_{k-1},\lambda_{k-1}) + \gamma_k (\Delta y^T_{k}, \Delta u^T_k,\Delta t^T_k,\Delta w^T_k,\Delta \lambda_k);$}
\STATE{\hspace{0.1cm} $\tilde{v}_{k} = \lambda_k\left(w_k - \tilde{v}_0 \right) + u_k;$}
\STATE{ \hspace{0.1cm} $F_k = \left(\tilde{v}_k^T,\frac{1}{2}||t_k - b||^2 - \frac{\sigma^2}{2}\right)^T;$ \hfill $\#$ Compute $F_k = F(x_k,\lambda_k)$}
\STATE{ \textbf{end while}}
\STATE{$\bar{y}_{k} = (y_{k}^T,0)^T;$$d_{k+1}= (d_k^T,0)^T;$}
\STATE{$v_{k} = \tilde{v}_{k} - \sum_{j=0}^{k-1} (v_{j}^T \tilde{v}_{k}) v_{j};$ \hfill \label{orth2} $\#$ Gram-Schmidt}
\STATE{$v_{k} = v_k/||v_k||;$ $V_{k + 1} = [V_k,v_{k}]$ \hfill $\#$ Normalize and add new vector to basis}
\ENDFOR
\STATE{$x_k = V_k y_k;$}
\end{algorithmic}
\end{algorithm}

Similarly as in \cref{sec:flops}, it is readily checked that the number of \textsc{flop}s required in iteration $k$ of algorithm \cref{alg:PNTM_gen} is given by
\begin{equation}\label{eq:flops2}
\textsc{flop}s(A) + \textsc{flop}s(A^T) + \textsc{flop}s(L) + \textsc{flop}s(L^T) + 6mk + 4sk + 8nk,  
\end{equation}
where we again disregard terms that no not have a factor $m,n$ or $s$ or do not depend on the iteration index $k$, with the exception of the matrix vector products with $A$, $A^T$, $L$ and $L^T$. Indeed, the QR decompositions of $AV_k$ and $LV_k$ require about $4mk$ and $4sk$ \textsc{flop}s respectively. The tall and skinny matrix-vector products on line \ref{tallskinny2} in \cref{alg:PNTM_gen} take about $4nk + 2mk$ \textsc{flop}s and the orthogonalization step on line \ref{orth2} still requires about $4nk$ \textsc{flop}s. 

\begin{remark}
It is possible to replace the $\ell_2$ norm for the data fidelity term with the $\ell_q$ norm with $1\leq q \leq 2$, i.e. to consider the constrained optimization problem
\begin{equation}\label{eq:noise_lplq}
\min_{x\in\mathbb{R}^n} \hspace{0.2cm} \frac{1}{p}||Lx||^p_p \hspace{1cm} \text{subject to} \hspace{1cm} \frac{1}{q} ||A x - b||^{q}_q = \frac{1}{q}||e||_q^q.
\end{equation}
It is straightforward to extend the Projected Newton method such that it can solve (a smooth approximation of) this problem. We can simply take the smooth approximation $\Psi_q(Ax - b)$ of $\frac{1}{q}||Ax - b||_q^q$ and consider computational improvements similarly to how we treated $\Psi_{p}(Lx)$. 
\end{remark}

\section{Reference methods}\label{sec:refmeth}

This section describes two reference methods that we use to compare the Projected Newton method with in \cref{sec:num}. The first one can be used to solve the general form Tikhonov problem \cref{eq:reform2}, while the second one can be used to obtain an approximate solution for the $\ell_p$ regularized problem \cref{eq:lp} with a fixed regularization parameter $\alpha$. 

\subsection{Generalized Krylov subspace method for general form Tikhonov regularization} \label{sec:gkssec}
The \textit{Generalized Krylov subspace} (GKS) Tikhonov regularization method developed in \cite{LAMPE20122845}, see \cref{alg:GKS}, can only be used to solve the general form Tikhonov problem.
However, since it serves as inspiration for this work and bears resemblance to our approach, we believe it deserves some attention. It also constructs a Generalized Krylov subspace basis $V_{k} \in \mathbb{R}^{n \times k}$ for $k\geq l$ (although a different one) and computes the solution to the projected problem 
\begin{equation}\label{eq:ygks}
y_k  = \argmin_{y\in\mathbb{R}^{k}} ||AV_{k}y - b||_2^2 + \alpha_k ||LV_{k}y||_2^2.
\end{equation}
where $\alpha_k$ is determined such that $||AV_{k}y_k - b|| = \sigma$. Note that $\alpha_k$ can be determined using a scalar root-finder, which requires that \cref{eq:ygks} is solved multiple times. The GKS algorithm starts for $k=l$ with some initial $l$-dimensional orthonormal basis $V_l \in\mathbb{R}^{n\times l}$, for instance the basis for the Krylov subspace $\mathcal{K}_l(A^TA,A^Tb)$, where the dimension $l$ of the initial basis is large enough such that a regularization parameter $\alpha_l$ exists that satisfies the discrepancy principle $||AV_l y_l - b|| = \sigma$.
Subsequently, the basis is expanded by adding the normalized residual of the unreduced problem, i.e. 
\begin{equation}\label{eq:vnew}
\tilde{v} = (A^T A + \alpha_k L^T L)V_{k} y_k - A^Tb, \hspace{1cm}v_{new} = \tilde{v}/||\tilde{v}||. 
\end{equation}

In exact arithmetic this vector is already orthogonal to $V_{k}$. However, the authors in \cite{LAMPE20122845} suggest to add a re-orthogonalization step to enforce orthogonality in the presence of round-off errors. 

An efficient implementation of \cref{alg:GKS} updates the QR decomposition of the matrices $AV_k$ and $LV_k$ similarly as in \cref{alg:PNTM_gen}. Moreover, each iteration also requires a single matrix-vector product with $A$, $A^T$, $L$ and $L^T$. In fact, if we include a re-orthogonalization step in \cref{alg:GKS} it is readily checked that the dominant cost is precisely the same as for \cref{alg:PNTM_gen}, i.e. given by \cref{eq:flops2}. We leave out the details here, but rather refer to \cite{LAMPE20122845} for more information. To get good performance with the GKS method it is also important to implement an efficient scalar root-finder for line \ref{rootfinder}. 

\begin{algorithm}
\caption{The GKS algorithm \cite{LAMPE20122845}}
\label{alg:GKS} 
\begin{algorithmic}[1] \small
\STATE{Construct initial $V_l\in\mathbb{R}^{n\times l}$ with $\mathcal{R}(V_l) = \mathcal{K}_l(A^TA,A^Tb)$ and $V_l^TV_l=I_l$}
\FOR{$k = l,l + 1,l + 2,\ldots,\text{until convergence}$} 
\STATE{Calculate $\alpha_k$ and the solution $y_k$ of \cref{eq:ygks} such that $||AV_ky_k - b||=\sigma$. \label{rootfinder}}
\STATE{Calculate new basis vector $v_{new}$ by \cref{eq:vnew} (and re-orthogonalize).}
\STATE{Expand basis $V_{k+1} = [V_k,v_{new}]$.}
\ENDFOR
\STATE{$x_k = V_k y_k$}
\end{algorithmic}
\end{algorithm}

Let us briefly comment on the difference between the GKS method and the Projected Newton method (for general form Tikhonov regularization). While both methods use Generalized Krylov subspaces, the way  the iterates are calculated is significantly different.  While the iterates generated by the Projected Newton method only satisfy the discrepancy principle (with a certain accuracy) after a certain number of iterations, the GKS constructs iterates $x_k$ that all satisfy $||Ax_k - b|| = \sigma$.  Moreover, in each iteration of the GKS algorithm, the projected minimization problem is also solved exactly, while the Projected Newton method only performs a single Newton iteration for each dimension $k$. The basis $V_{k}$ constructed in \cref{alg:GKS} is thus similar to the basis generated in \cref{alg:PNTM_gen}, although not the same since both methods compute different iterates. 

Lastly, we briefly mention that the GKS method also closely resembles the Generalized Arnoldi Tikhonov (GAT) method \cite{GAZZOLA2014180}. The latter method uses the Arnoldi algorithm to construct a basis $V_k$ for the Krylov subspace $\mathcal{K}_k(A,b)$, which implies that it can only be applied to square matrices $A$. In each iteration of the GAT method, \cref{eq:ygks} is solved for a single value of $\alpha_k$ and then the regularization parameter is updated using a single step of the secant method based on the discrepancy principle. The Krylov subspace basis is subsequently expanded using one step of the Arnoldi algorithm. In contrast to the GKS method, the intermediate iterates $x_k$ in the GAT method do not necessarily need to satisfy discrepancy principle. 

\subsection{Generalized Krylov subspace method for $\ell_1$ regularization}

The GKSpq method developed in \cite{doi:10.1137/140967982} combines the iteratively reweighted norm (IRN) algorithm \cite{rodriguez2008efficient,4380459} with ideas from the Generalized Krylov subspace method described above. It is able to solve the regularized problem with $\ell_q$ data fidelity term and $\ell_p$ regularization term: 
\begin{equation}\label{eq:lplq}
\min_{x\in\mathbb{R}^n} \frac{1}{q} ||A x - b||_q^q + \frac{\alpha}{p} ||L x||_p^p.
\end{equation}
The authors in \cite{doi:10.1137/140967982} present the algorithm for a fixed regularization parameter $\alpha$. However, similarly as for \cref{alg:GKS}, it is possible to formulate a version that finds a suitable regularization parameter that satisfies the discrepancy principle, i.e. this approach can be used to solve \cref{eq:noise_lplq}. For the sake of presentation, however, we focus our discussion on the simplified case with fixed regularization parameter $\alpha$ and with $q=2,p=1$.

The main idea of the IRN approach is to replace the non-differentiable term $||Lx||_1$ with a sequence of $\ell_2$ norm approximations $||W_k L x||_2^2$ for $k\geq0$ with a (diagonal) weighting matrix $W_k \in\mathbb{R}^{s\times s}$. To do so, we first consider the matrix
\begin{equation} \label{eq:lirn}
\bar{W}(Lx) = \text{diag} \left( \frac{1}{\sqrt{ | [L x]_i |}} \right)_{i=1,\ldots,n}  
\end{equation}
where $ [L x]_i $ is the notation for the $i$th component of the vector $Lx\in\mathbb{R}^s$. Obviously \cref{eq:lirn} is only well-defined if $[Lx]_i\neq0$ for all $i$. In that case we have
\begin{equation*}
||\bar{W}(Lx)Lx||^2_2 = \sum_{i = 1}^{n} \left(\frac{[Lx]_i}{\sqrt{ | [Lx]_i |}}\right)^2 = \sum_{i = 1}^{n}\frac{[Lx]^2_i}{| [Lx]_i |} = \sum_{i = 1}^{n} |[Lx]_i| = ||Lx||_1.
\end{equation*}
To avoid division by zero, we slightly alter the definition of $\bar{W}(Lx)$, as follows: 
\begin{equation}\label{eq:ltilde}
W(Lx) = \text{diag} \left( \frac{1}{\sqrt{\tau([Lx]_i)}} \right)_{i=1,\ldots,n}, \hspace{0.2cm} \text{with}\hspace{0.2cm} \tau(t) = \begin{cases} |t| & \text{if }|t| \geq \tilde{\tau} \\ \tilde{\tau} & \text{if } |t|< \tilde{\tau} \end{cases}. 
\end{equation}
where $\tilde{\tau}$ is a small positive constant.

Let $V_1 = A^Tb / ||A^Tb||$ be the initial basis for the GKSpq method and choose initial point $x_0$. In iteration $k\geq 1$ the GKSpq method computes the solution of the projected problem
\begin{equation}\label{eq:ygkspq}
y_k  = \argmin_{y\in\mathbb{R}^{k}} ||AV_{k}y - b||_2^2 + \alpha ||W_k LV_{k}y||_2^2,
\end{equation}
with the diagonal weighting matrix $W_k = W(Lx_{k-1})$ defined by \cref{eq:ltilde}. The next iterate is then given by $x_k = V_ky_k$.
This problem can be solved efficiently by considering QR decompositions $AV_k = Q_k R_k$ and $W_k L V_k = \bar{Q}_k \bar{R}_k$. The solution of \cref{eq:ygkspq} satisfies the $k\times k$ linear system
\begin{equation} \label{eq:solvegkspq}
\left(R_k^T R_k + \alpha \bar{R}_k^T \bar{R}_k \right)y_k = R_k^T Q_k^T b. 
\end{equation}

Next, the basis is extended by adding the normalized residual of the unreduced problem. The residual is given by
\begin{eqnarray}\label{eq:vnew2}
\tilde{v}_{new} &=& \left(A^T A + \alpha L^T W_k^2 L\right)V_{k} y_k - A^Tb \\
&=& A^T(AV_ky_k - b) + \alpha L^T W_k^2 (LV_k y_k)
\end{eqnarray}
A full description of the GKSpq method is given by \cref{alg:GKSpq}. 
\subsubsection{Computational cost of the GKSpq method.}
Let us briefly comment on the computational cost of a single iteration of the algorithm. Each iteration requires a single matrix-vector product with $A$, $A^T$, $L$ and $L^T$, namely on lines \ref{matvec1} and \ref{matvec2}. Computing the matrix $W_kLV_k$ requires $sk$ \textsc{flop}s.  Note that the QR decomposition of $AV_k$ can be efficiently updated throughout the algorithm using  \cref{eq:updateqr1}-\cref{eq:rtr} and only requires about $4mk$ \textsc{flop}s. However, since the matrix $W_k$ changes in each iterations, we need to recompute the QR decomposition for $W_kLV_k \in \mathbb{R}^{s\times k}$ from scratch each iteration. This requires approximately $2sk^2$ \textsc{flop}s. This is the main difference with the number of \textsc{flop}s required for an iteration of the Projected Newton method, see \cref{eq:flops}. Calculating $AV_ky_k$ and $LV_ky_k$ takes about $2mk$ and $2sk$ \textsc{flop}s respectively. To summarize, we have that the cost of a single iteration of \cref{alg:GKSpq} is given by
\begin{equation}
\textsc{flop}s(A) + \textsc{flop}s(A^T) + \textsc{flop}s(L) + \textsc{flop}s(L^T) +  6mk + 3sk + 2 sk^2,  
\end{equation}
where we again disregard terms that do not depend on the iteration index $k$ or that do not have a factor $m$, $n$ or $s$. 
When we compare this with \cref{eq:flops} we can observe that, once the term $sk^2$ starts to dominate, the Projected Newton method requires much fewer \textsc{flop}s per iteration than the GKSpq method. 
 
To conclude this section, we like to mention another interesting hybrid method that combines the IRN approach with a projection step on a lower dimensional subspace. In \cite{doi:10.1137/130917673,doi:10.1137/18M1194456} the authors combine the IRN approach with flexible Krylov subspace methods for the $\ell_p$ regularized problem \cref{eq:lp}, but it only works for invertible matrices $L$. 

\begin{algorithm}
\caption{The GKSpq method for the solution of the $\ell_1$ regularized problem}
\label{alg:GKSpq} 
\begin{algorithmic}[1] \small
\STATE{Initialize $v_0 = A^T b/||A^T b||; V_1 = v_0; AV_1 = Av_0; LV_1=Lv_0$; $W_1 = W(Lx_0);$}.
\FOR{$k = 1,2,\ldots,\text{until convergence}$} 
\STATE{Compute $R_k$ and $Q_k$ for the QR decomposition of $AV_k$ using \cref{eq:updateqr1}-\cref{eq:rtr}}
\STATE{Compute $\bar{R}_k$ and $\bar{Q}_k$ for the QR decomposition of $W_k LV_k$}
\STATE{Compute $y_k$ the solution of \cref{eq:solvegkspq}.}
\STATE{Compute weighting matrix $W_{k+1} = W(LV_{k}y_{k})$ defined by \cref{eq:ltilde}.}
\STATE{Compute residual $\tilde{v}_{new} = A^T(AV_ky_k - b) + \alpha L^TW_k^2(LV_k y_{k})$. \label{matvec1}}
\STATE{Compute new basis vector $v_{new} = \tilde{v}_{new}/||\tilde{v}_{new}||$}
\STATE{$V_{k+1} = [V_k,v_{new}]; AV_{k+1} = [AV_k, Av_{new}]; LV_{k+1} = [LV_k,Lv_{new}];$ \label{matvec2}}
\ENDFOR
\STATE{$x_k = V_ky_k;$}
\end{algorithmic}
\end{algorithm}

\section{Numerical experiments}\label{sec:num}

In this section we perform a number of experiments to illustrate the behavior of the Projected Newton method and to compare it to the reference methods described in \cref{sec:refmeth}. We start with some small scale toy models to illustrate different interesting properties and then consider larger, more representative test-problems to study the quality of the obtained solution. The main focus of this section is on the sparsity inducing $\ell_1$ norm and the edge preserving total variation regularization term, rather than the general form Tikhonov problem, which we only briefly discuss. We also comment on possible different convergence criteria that can be used.  All experiments are performed using MATLAB. 

\subsection{Generalized Krylov subspaces and induced tridiagonal structure}

\paragraph{\textbf{Experiment 1.}}
We start with a small experiment that justifies calling the basis $V_k$ generated in the Projected Newton method a Generalized Krylov subspace, see \cref{eq:vtilde} and \cref{eq:gs} and the discussion following these expressions. This experiment is purely for theoretical interest and illustrating the behavior of the Generalized Krylov subspaces used in the algorithm and should not be regarded as a representative test-problem. 

We consider the test-problem \texttt{heat} from the MATLAB package Regularization Tools \cite{hansen1994regularization} with $n=200$ and optional parameter $kappa = 3$ which gives a matrix $A\in\mathbb{R}^{200\times 200}$ with condition number $\kappa(A)\approx 5.8 \times 10^{17}$ and exact right-hand side $b_{ex}$. Next, we add $15\%$ Gaussian noise to the exact right-hand side $b_{ex}$, which means we have $\tilde{\epsilon}:=||b - b_{ex}||/||b_{ex}|| = 0.15$. We refer to this value as the relative noise-level. Moreover, we choose parameters $\lambda_0 = 10^5$, $\eta=1$ and stop the algorithm when $||F(x_k,\lambda_k)|| < 10^{-6}$. We apply the Projected Newton method to the standard form Tikhonov problem, to the general form Tikhonov problem with $L \in\mathbb{R}^{(n-1)\times n}$ the forward finite difference operator given by \cref{eq:forward} and the regularized problem with $\Psi(x) = \Psi_{1}(x)$ as defined in \cref{eq:smoothlp}, the smooth approximation to the $\ell_1$ norm with $\beta = 10^{-5}$. 

In \cref{fig:tridiag} (top) we plot the convergence history in terms of $||F(x_k,\lambda_k)||$ for the three regularized problems and we show that the regularization parameter $\lambda_k$ stabilizes quickly. Here and in what follows the index $k$ denotes the iteration index. As explained in \cref{sec:projminsec}, we know that the basis $V_k$ generated by the Projected Newton method for the standard form Tikhonov problem is in fact a Krylov subspace basis due to the shift-invariance property of Krylov subspaces \cref{eq:shiftinv}. As a consequence we have that $V_{k}^T (A^T A + \alpha_k I) V_k$ with $\alpha_k = 1/\lambda_k$ is a tridiagonal matrix. In \cref{fig:tridiag} (bottom) we have illustrated this by showing the absolute value of the elements of this matrix for the final iteration $k$. Similarly, for the general form Tikhonov problem we show the matrix $V_{k}^T (A^T A + \alpha_k L^T L) V_k$ and for the $\ell_1$ regularized problem we show $V_{k}^T (A^T A + \alpha_k \nabla^2 \Psi_1(x_k)) V_k$ (each with their respective basis $V_k$ and parameter $\alpha_k$). For the latter two problems, this matrix is not really tridiagonal since $V_k$ is not an actual Krylov subspace, but due to the rapid stabilization of the regularization parameter, we can observe that the size of the elements on the three main diagonals is much larger than the size of the other elements (see the color-bar right of the corresponding figure). This shows that the matrix $V_k$ does closely resemble an actual Krylov subspace basis in this particular case.

Again, we do not claim that this is a representative for all problems, but it does illustrates our observations. The effect may of course be less pronounced for other, more realistic test-problems.

\begin{figure}
\centering
\begin{subfigure}{1\linewidth}
\begin{tabular}{cc}
\includegraphics[width=0.49\textwidth]{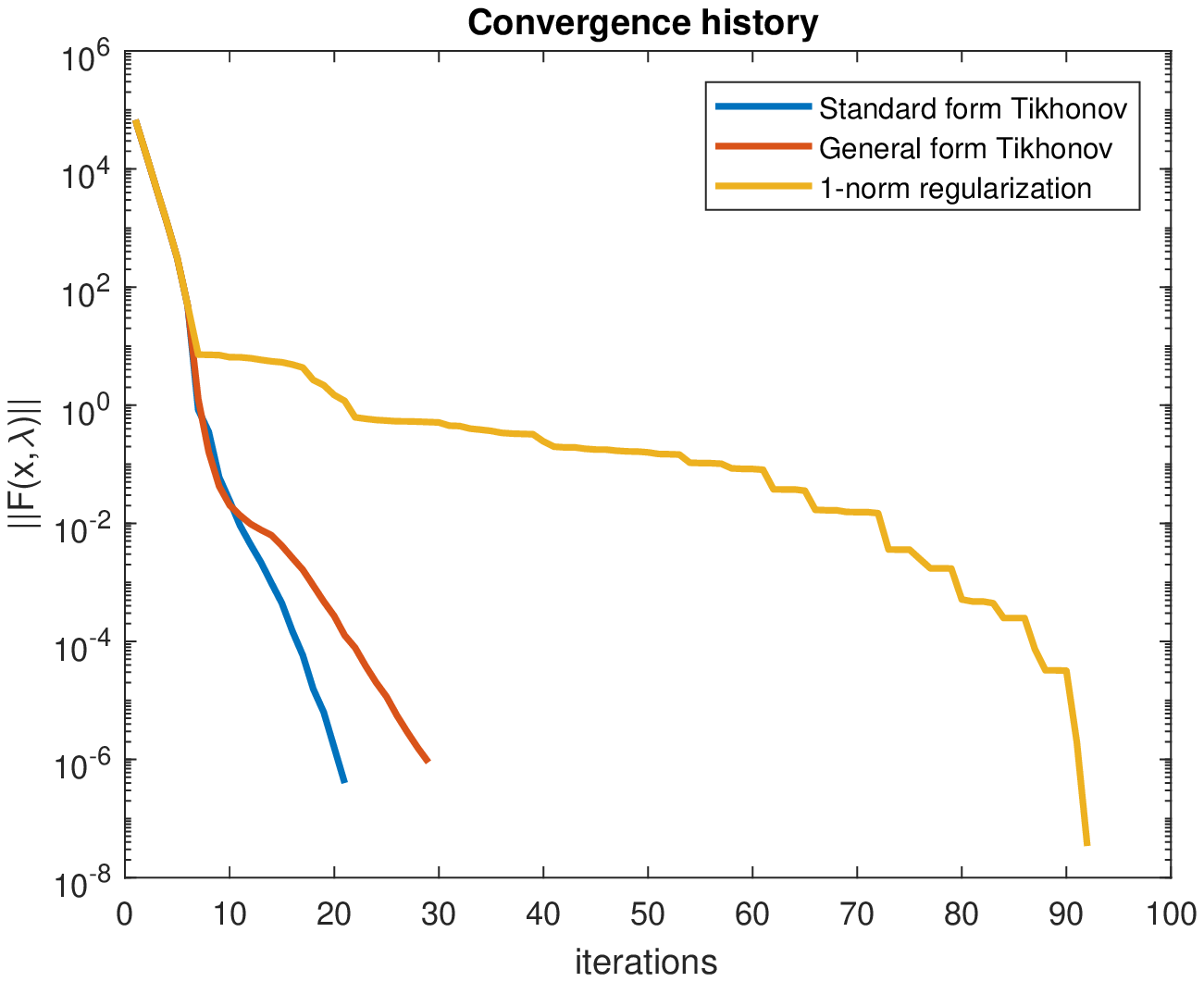} & \includegraphics[width=0.49\textwidth]{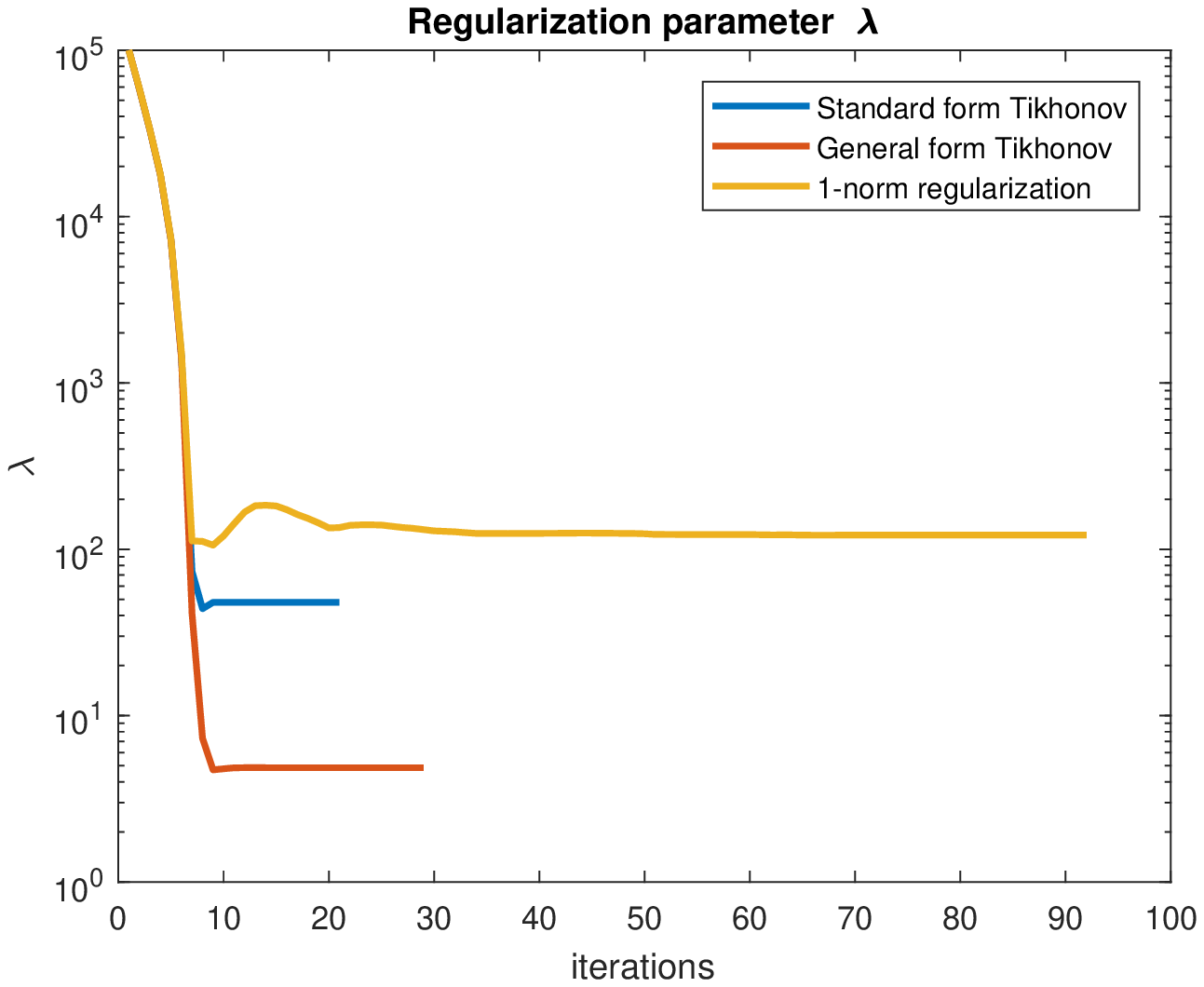} 
\end{tabular}
\setcounter{subfigure}{0}%
\end{subfigure}

\begin{subfigure}{1\linewidth}
\begin{tabular}{ccc}
\hspace{-0.5cm}\includegraphics[width=0.36\textwidth]{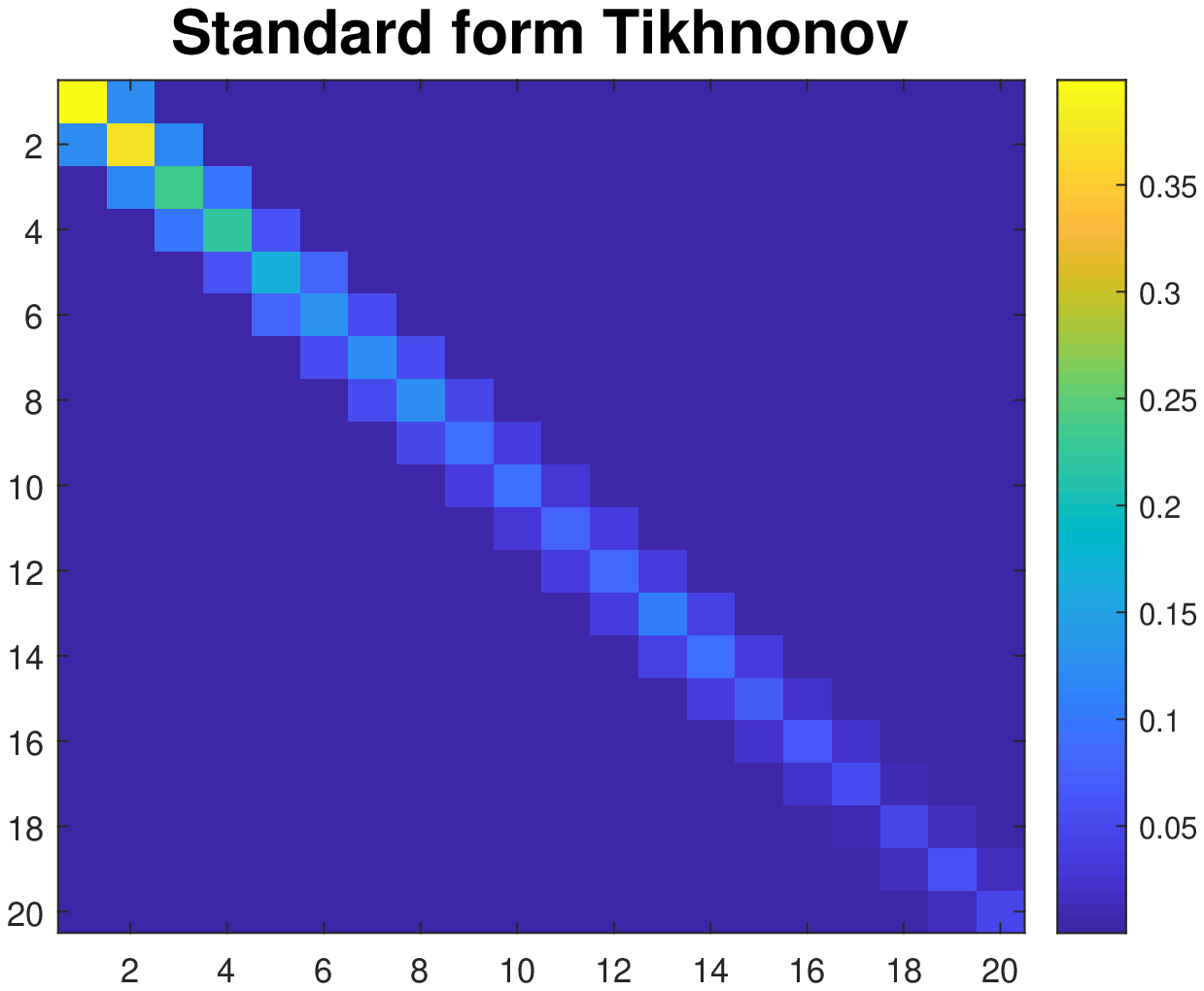} \hspace{-0.9cm} & \includegraphics[width=0.36\textwidth]{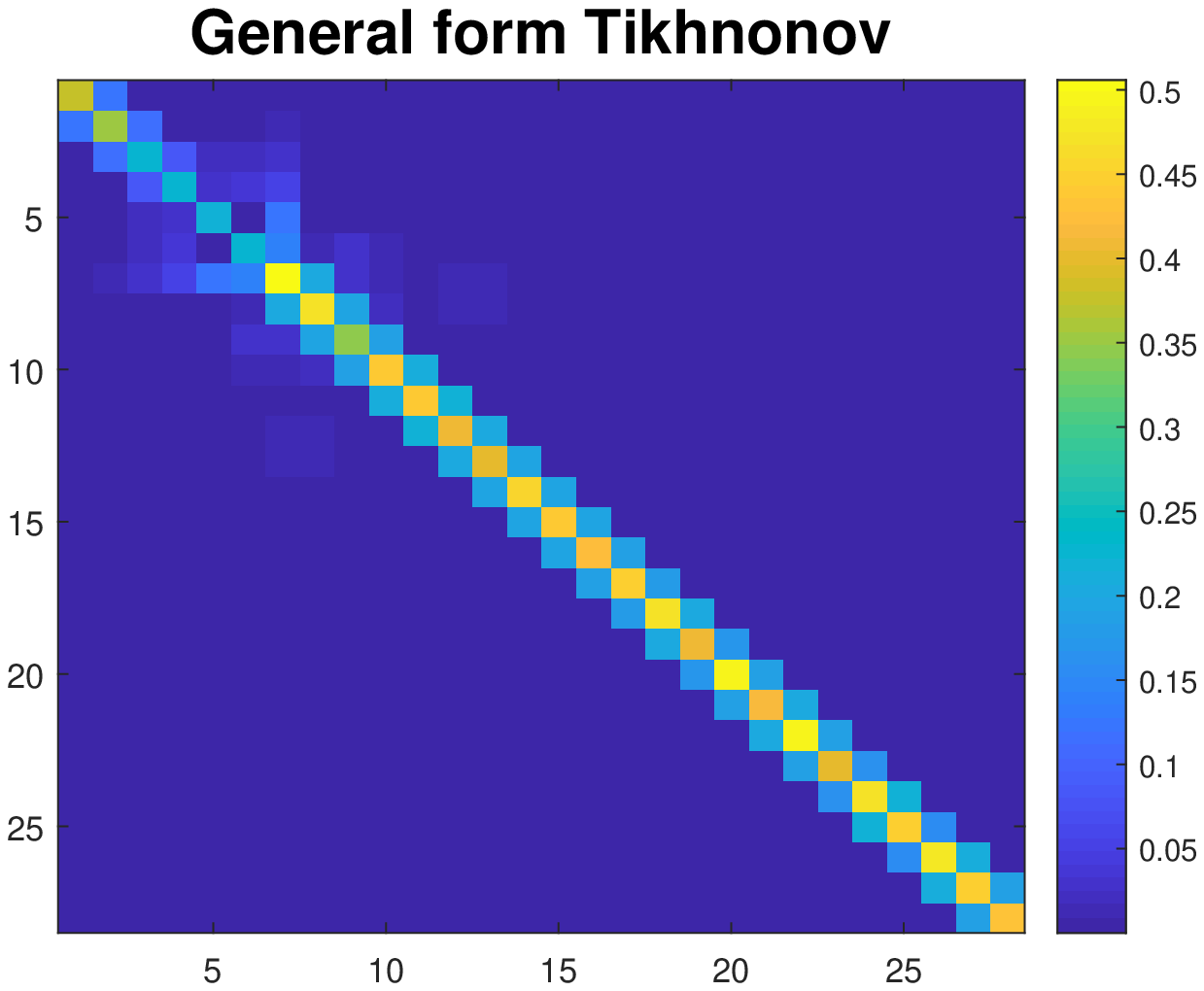} \hspace{-0.9cm} & \includegraphics[width=0.36\textwidth]{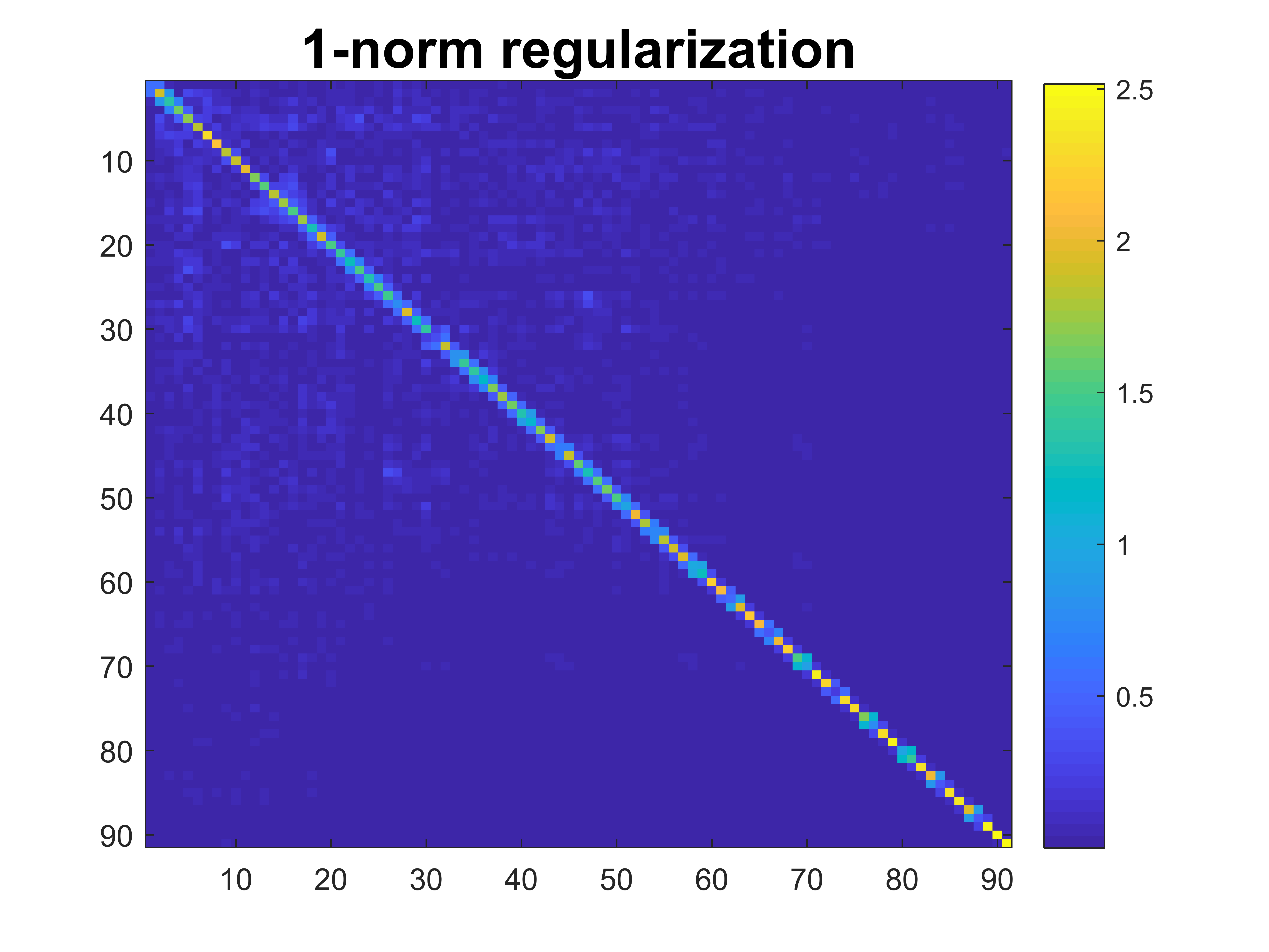}
\end{tabular}
\setcounter{subfigure}{2}%
\end{subfigure}
\caption{\textit{\textbf{Experiment 1.}} (Top) Convergence history (left) and regularization parameter (right) for standard form Tikhonov, general form Tikhonov and $\ell_1$ norm regularization (i.e. $\Psi(x) = \Psi_1(x)$) applied to test-problem \texttt{heat} with $A\in\mathbb{R}^{200 \times 200}$. $15\%$ Gaussian noise is added to the exact right-hand side $b_{ex}$. (Bottom) Approximate tridiagonal structure of the projected regularized normal equations induced by the Generalized Krylov subspace. \label{fig:tridiag}}
\end{figure}

\subsection{Comparison with GKS}

\begin{figure}
\begin{tabular}{cc}
\includegraphics[width=0.49\textwidth]{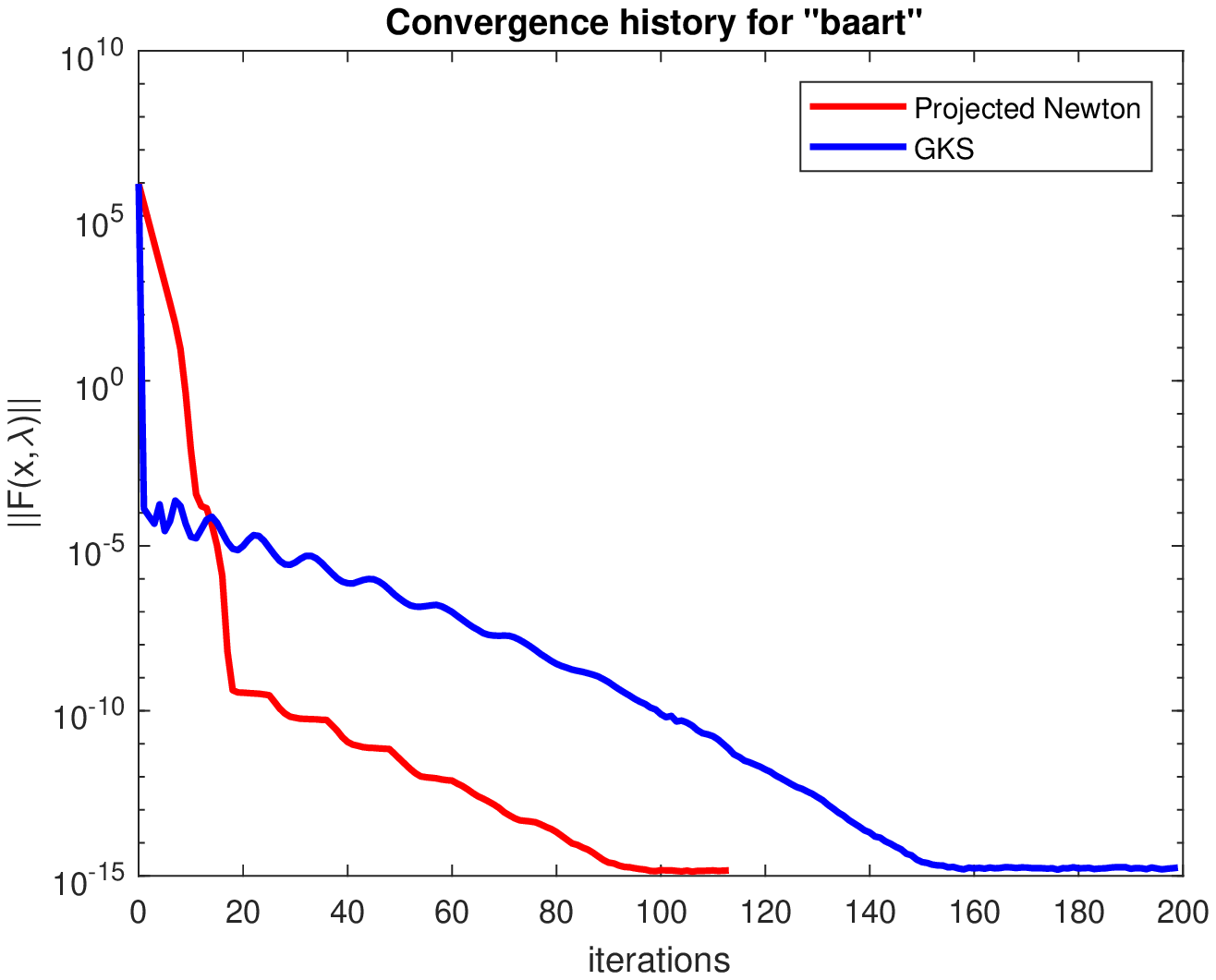} & \includegraphics[width=0.49\textwidth]{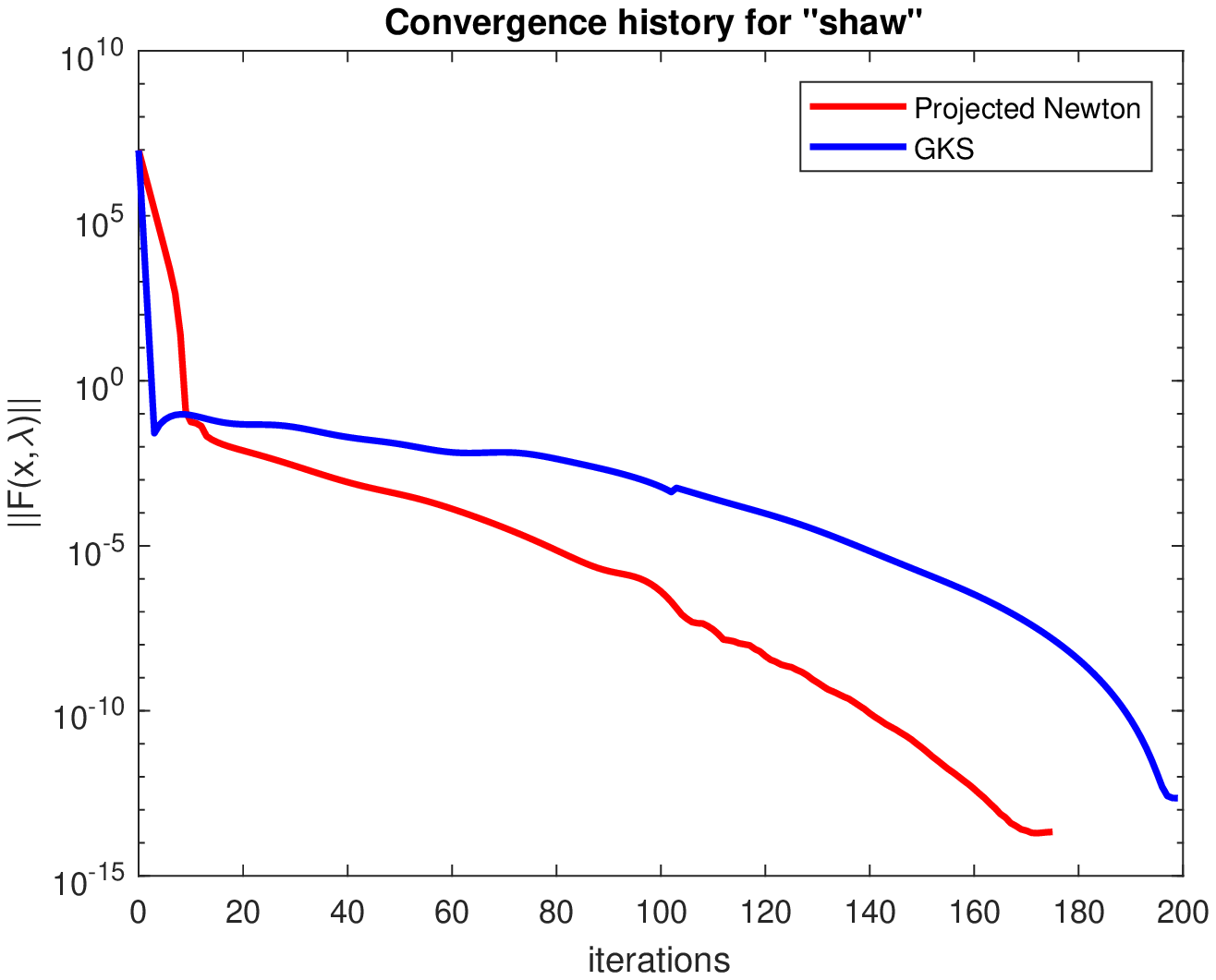} 
\end{tabular}
\caption{\textit{\textbf{Experiment 2.}} Comparison of convergence history of Projected Newton method and the Generalized Krylov subspace method (\cref{alg:GKS}) in terms of number of iterations for general form Tikhonov problem with $L \in\mathbb{R}^{(n-1)\times n}$ the finite difference operator given by \cref{eq:forward}. \label{fig:gks}}
\end{figure}

\paragraph{\textbf{Experiment 2.}} In the next experiment we compare the Projected Newton method with the GKS method, see \cref{alg:GKS}. To do so, we consider the general form Tikhonov problem and again take $L \in\mathbb{R}^{(n-1)\times n}$ the finite difference operator \cref{eq:forward} and test-problems \texttt{baart} and \texttt{shaw} from the Regularization Tools package with $n=200$, which gives us two matrices $A\in\mathbb{R}^{200 \times 200}$ that both have a condition number $\kappa(A) \approx 10^{19}$. We consider a relative noise-level of $0.1$ and keep the other parameters for the Projected Newton method the same as before. The dominant cost per iteration for the GKS method and Projected Newton method is comparable, as discussed in \cref{sec:gkssec}, so we show the convergence of $||F(x_k,\lambda_k)||$ in terms of the number of iterations\footnote{In our implementation of \cref{alg:GKS} used in this experiment we use a regula falsi scalar root-finder on line \ref{rootfinder}, which is not very efficient. Hence, we deem it more appropriate to compare iteration count rather than run-time.}.  The result of this experiment is given by \cref{fig:gks}. The Projected Newton method requires fewer iteration to converge to machine precision accuracy than the GKS method for both these test-problems. The effect is most pronounced for the test-problem \texttt{baart}, where the Projected Newton method reaches machine precision accuracy for $||F(x_k,\lambda_k)||$ in about $100$ iterations, while it takes the GKS method about $150$ iterations to reach the same accuracy. It is also interesting to note that the convergence for both methods is very rapid in the first few iterations and then slows down considerably.

\subsection{Influence of the smoothing parameter $\beta$}

\begin{figure}
\centering
\begin{subfigure}{.49\linewidth}
\includegraphics[width=\linewidth]{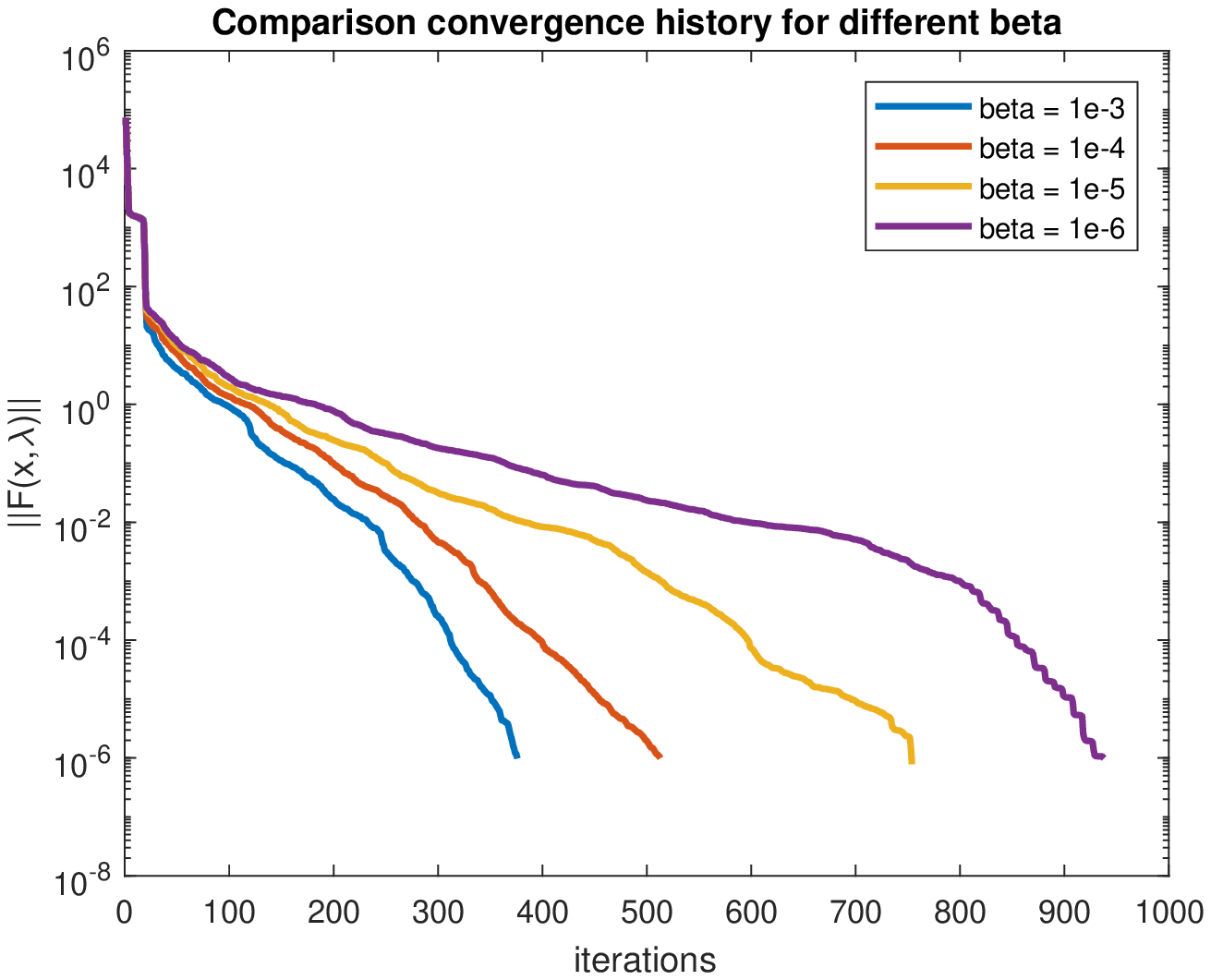}
\setcounter{subfigure}{0}%
\end{subfigure}
\begin{subfigure}{.49\linewidth}
\includegraphics[width=\linewidth]{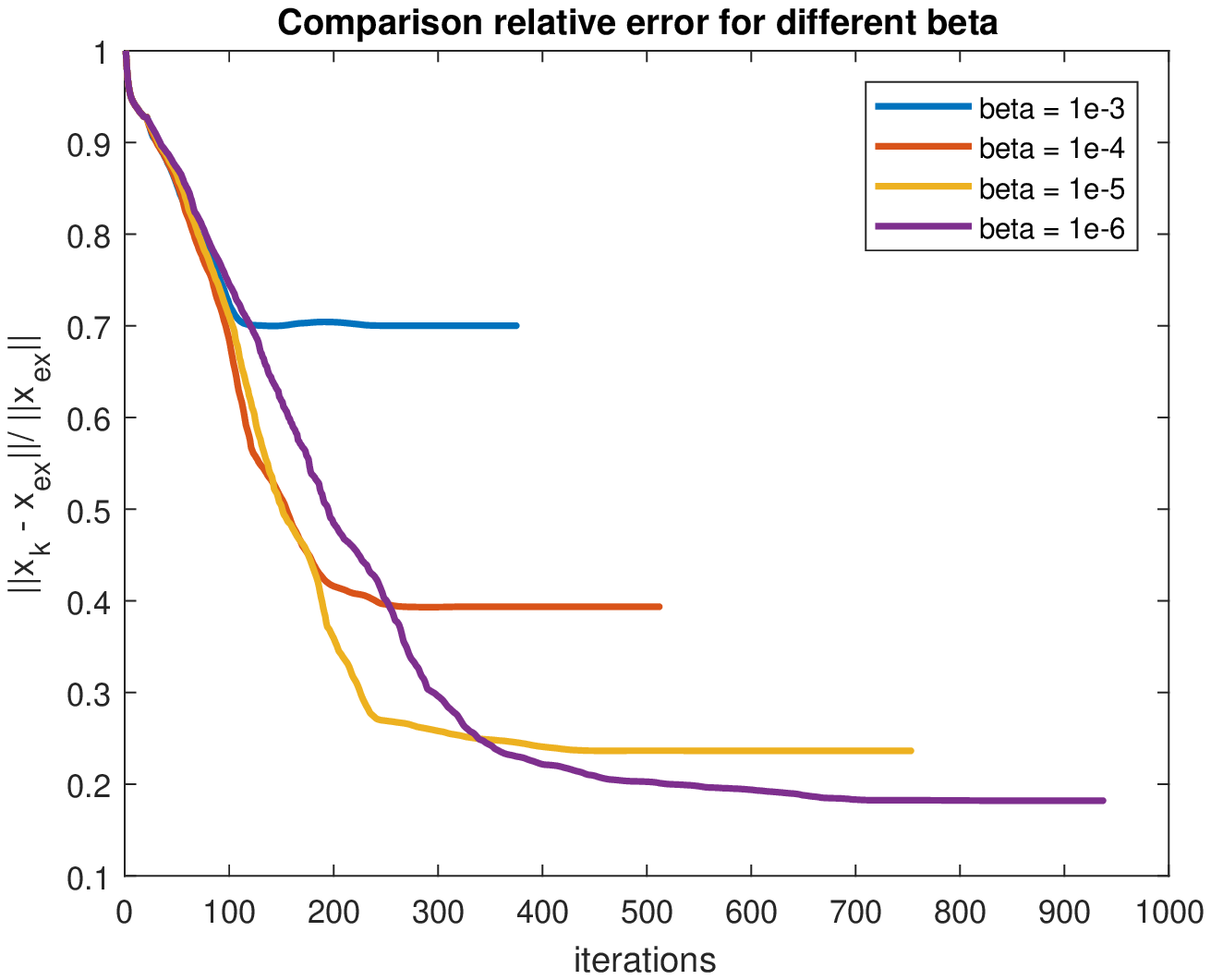}
\end{subfigure}

\begin{subfigure}{1\linewidth}
\includegraphics[width=\linewidth]{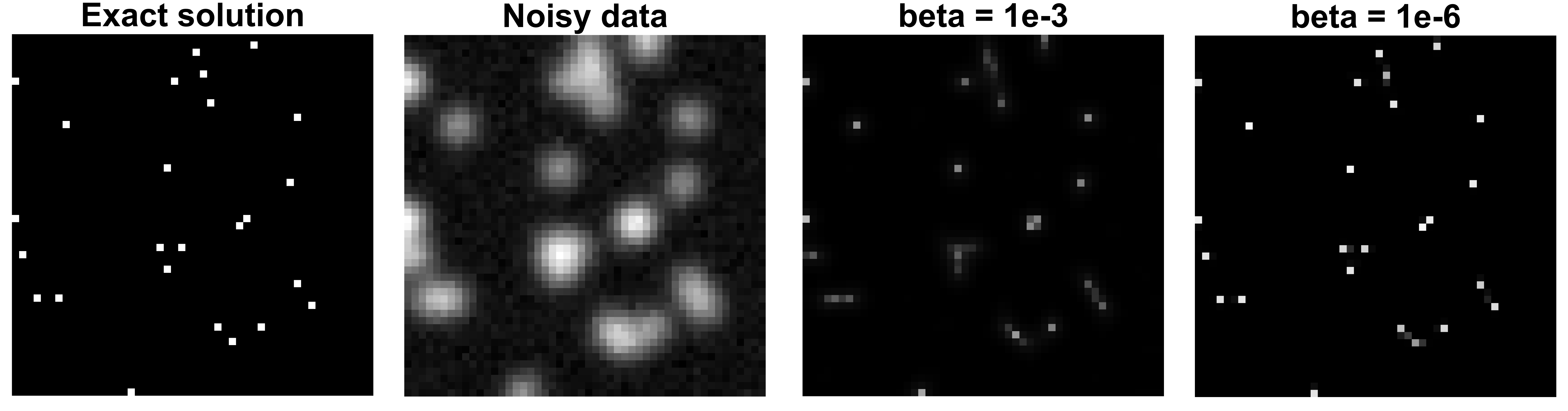}
\setcounter{subfigure}{2}%
\end{subfigure}

\caption{\textit{\textbf{Experiment 3.}} (Top) Comparison of convergence history (left) and relative error (right) for different values of the smoothing parameter $\beta$ for the image deblurring problem \texttt{PRblurgauss} with $\Psi(x) = \Psi_1(x)$. (Bottom) Exact solution and noisy data for deblurring experiment. Reconstructed solutions for $\beta = 10^{-3}$ and $\beta=10^{-6}$ are also shown. \label{fig:beta}}
\end{figure}

In this section we look at the quality of the obtained solution and how well the smooth function $\Psi_p(x)$ actually approximates $\frac{1}{p}||x||_p^p$. We study this for test-problems with a sparse exact solution, such that the (approximate) $\ell_1$ regularized solution, i.e. with $\Psi(x) = \Psi_1(x)$, should give a good reconstruction.

\paragraph{\textbf{Experiment 3.}} Let us consider a small sparse image $X \in \mathbb{R}^{50 \times 50}$ with approximately $1\%$ of the pixels set to one. We take the deblurring matrix $A \in \mathbb{R}^{2500 \times 2500}$ provided by the function \texttt{PRblurgauss} from the IR Tools MATLAB package \cite{gazzola2019ir} (with optional parameter \textit{blurlevel} set to \textit{mild}). This function generates the forward model $A$ that simulates an image deblurring problem with Gaussian point spread function. The exact solution $x_{ex}$ is obtained by stacking all columns $X$ and we get the corresponding exact data $b_{ex} = Ax_{ex}$. We add $10\%$ Gaussian noise to the data. See \cref{fig:beta} for an illustration of the exact solution and noisy data. To study the quality of the reconstruction we consider different values of $\beta$ used in the definition of the smooth approximation $\Psi_1(x)$, more precisely we take $\beta = 10^{-3},10^{-4},10^{-5}$ and $10^{-6}$. The smaller this value, the closer $\Psi_1(x)$ is to the actual $\ell_1$ norm. We stop the algorithm when $||F(x_k,\lambda_k)||< 10^{-6}$ and keep the other parameters the same as before. 

The result of this experiment is given by \cref{fig:beta}. In the top left figure we show the value $||F(x_k,\lambda_k)||$ for the different choices of $\beta$. We can observe that convergence is slower when $\beta$ is smaller. This is not entirely surprising since a small value of $\beta$ also implies that $\Psi_1(x)$ becomes less smooth and that the condition number of the Hessian $\nabla^2 \Psi_1(x)$ can become larger. However, a smaller value of $\beta$ also gives a better reconstruction, as shown by the relative error in the right figure. In \cref{fig:beta} we also show the actual reconstruction for $\beta = 10^{-3}$ and $\beta = 10^{-6}$. It is clear that the latter image is indeed a better reconstruction since in that case $\Psi_1(x)$ is a better approximation of $||x||_1$. When choosing this value, one should try to find a balance between improved quality of the reconstruction and the amount of work needed to find the solution. In our experience we find that $\beta=10^{-5}$ often performs quite well in practice. For very ill-conditioned matrices, however, it might be more suitable to consider slightly larger values, for instance $\beta = 10^{-4}$. Lastly, we would also like the point out the stable behavior of the error on the top right plot of \cref{fig:beta} and the fact that the error has stabilized well before a very accurate solution has been found (in terms of $||F(x_k,\lambda_k)||$). We explore this last observation in a bit more detail in \cref{sec:stopping}. 

In \cite{wu2019signal} the authors perform an interesting similar experiment where they study the influence of the smoothing parameter ($\mu$ in their notation) for five different smooth approximations of the $\ell_1$ norm, including our choice of $\Psi_1(x)$. The algorithm they study is based on the nonlinear Conjugate Gradient method \cite{8161135}. They make similar observations regarding the smoothing parameter: ``the smaller the parameter $\mu$, the better the signal to recover; and the more cpu time and iterations have to spend accordingly." We refer to their paper for more information \cite{wu2019signal}.

\subsection{Comparison with GKSpq}

To asses the performance of the Projected Newton method we perform a few timing experiments comparing the run-time of the algorithm with that of the GKSpq method, see \cref{alg:GKSpq}. We consider four different test-problems from the IR Tools package. The first problem we consider is the image dublurring test-problem \texttt{PRblurshake}, where the matrix $A$ simulates random camera motion in the form of shaking. For the second problem we take another image deblurring test-problem, namely \texttt{PRblurrotation}. Here, the blurring effect simulated by the matrix $A$ is due to rotation of the object in the image. The other two test-problems we consider are tomography problems. We take a classical 2D Computed Tomography (CT) problem, namely \texttt{PRtomo}, and a spherical means tomography problem \texttt{PRspherical}, which arise for instance in photo-acoustic imaging. For more information on these test-problems we refer to \cite{gazzola2019ir}.

\begin{figure}
\centering
\begin{tabular}{cc}
\includegraphics[width=0.49\textwidth]{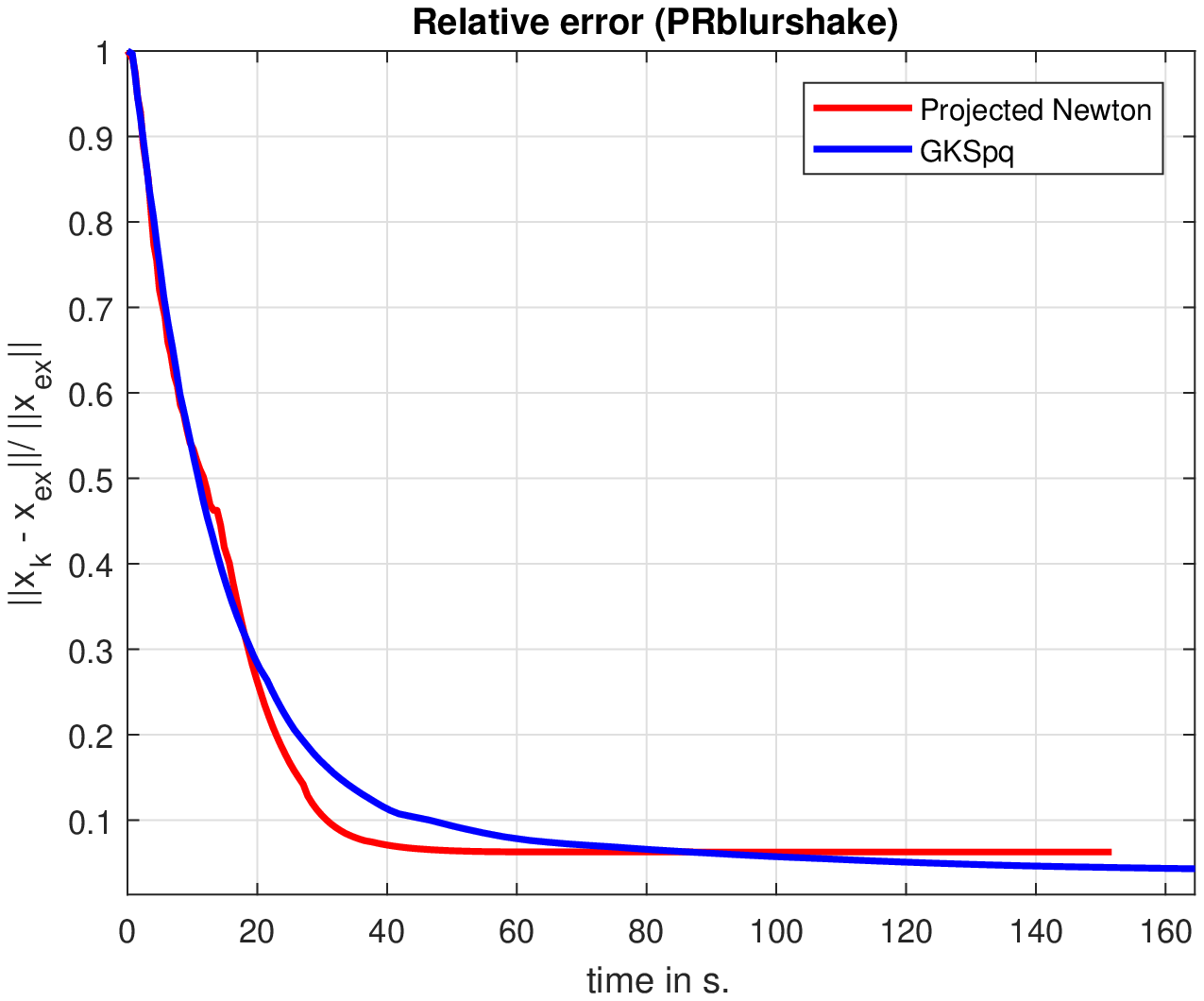} & \includegraphics[width=0.49\textwidth]{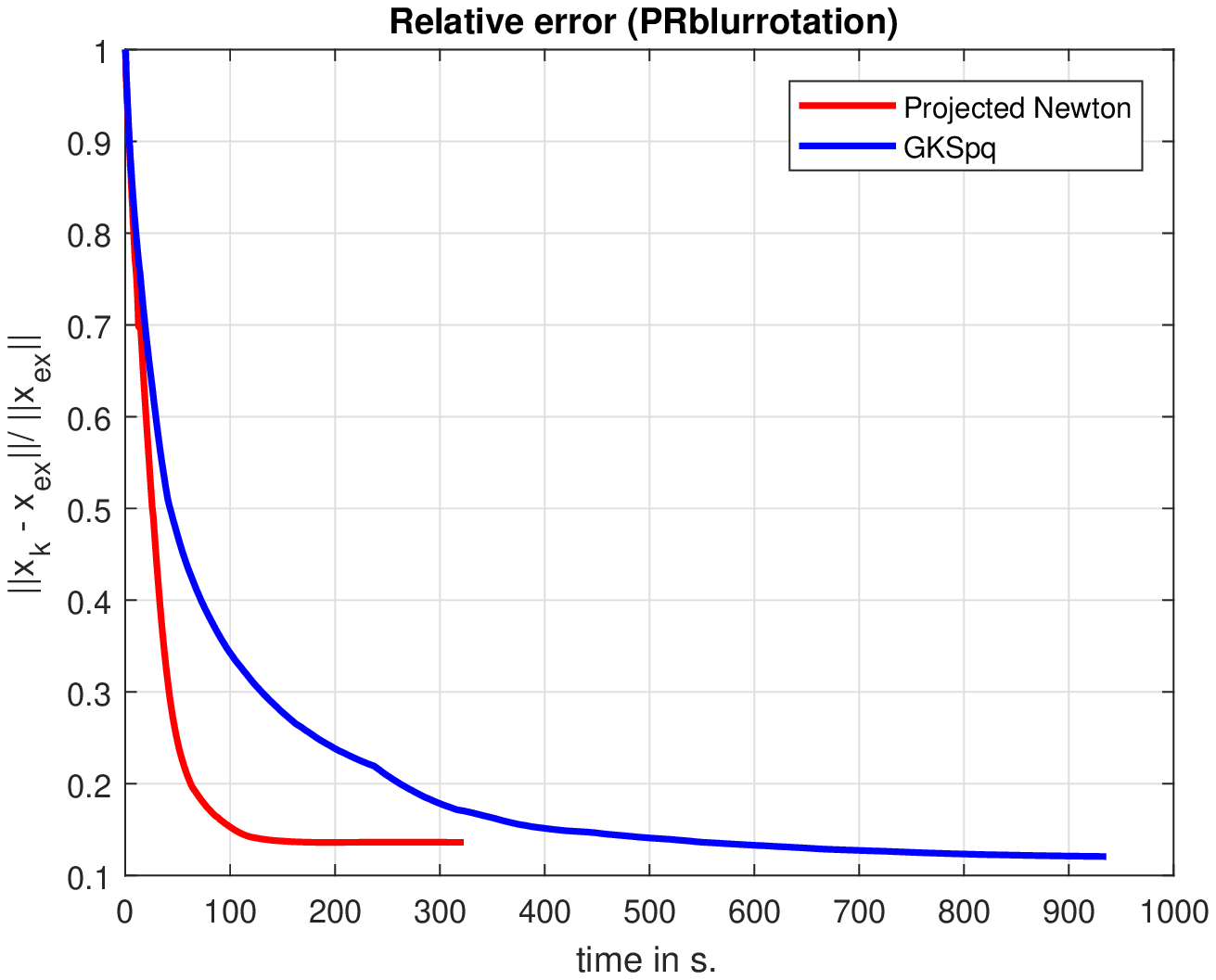} \\
\includegraphics[width=0.49\textwidth]{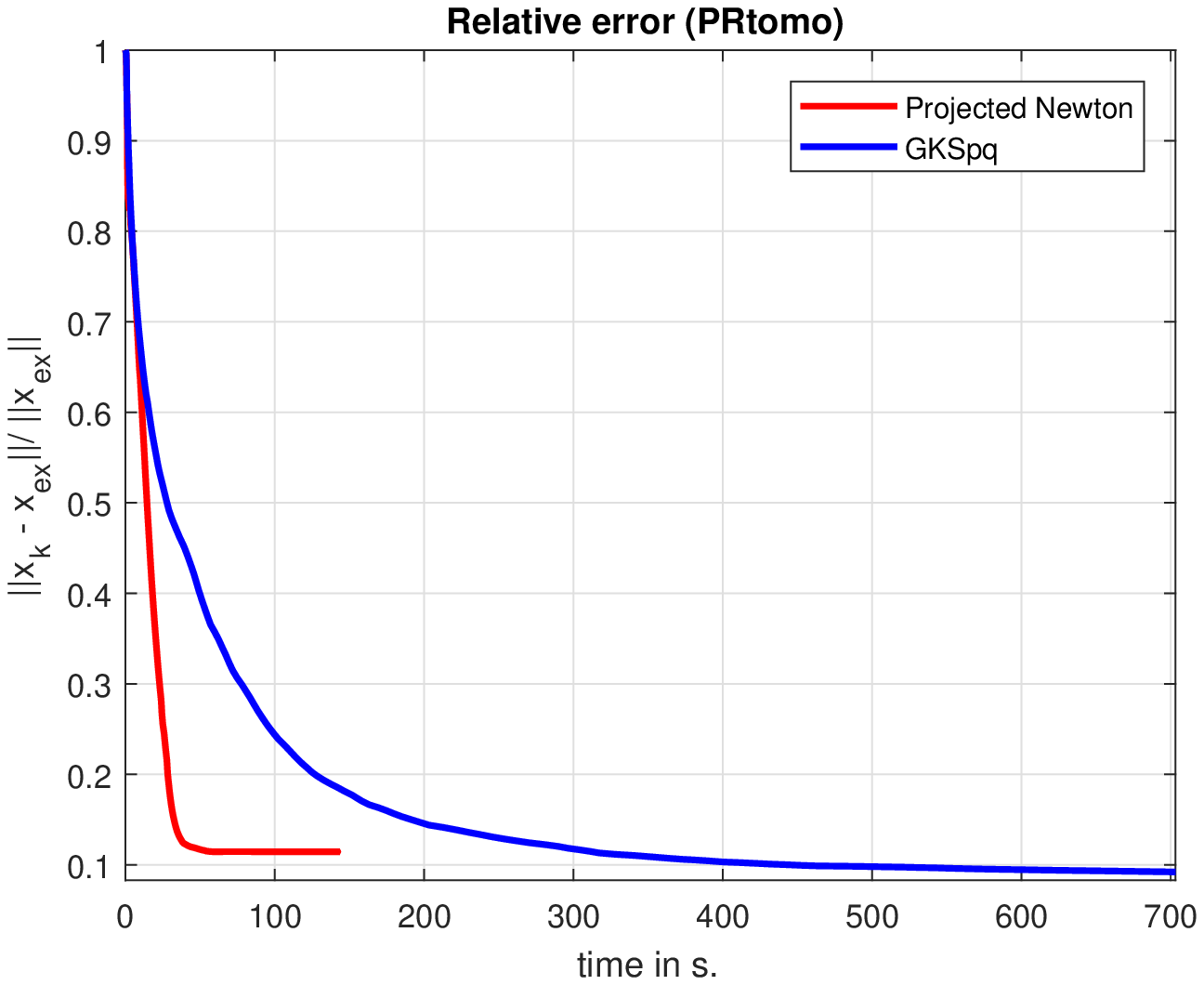} & \includegraphics[width=0.49\textwidth]{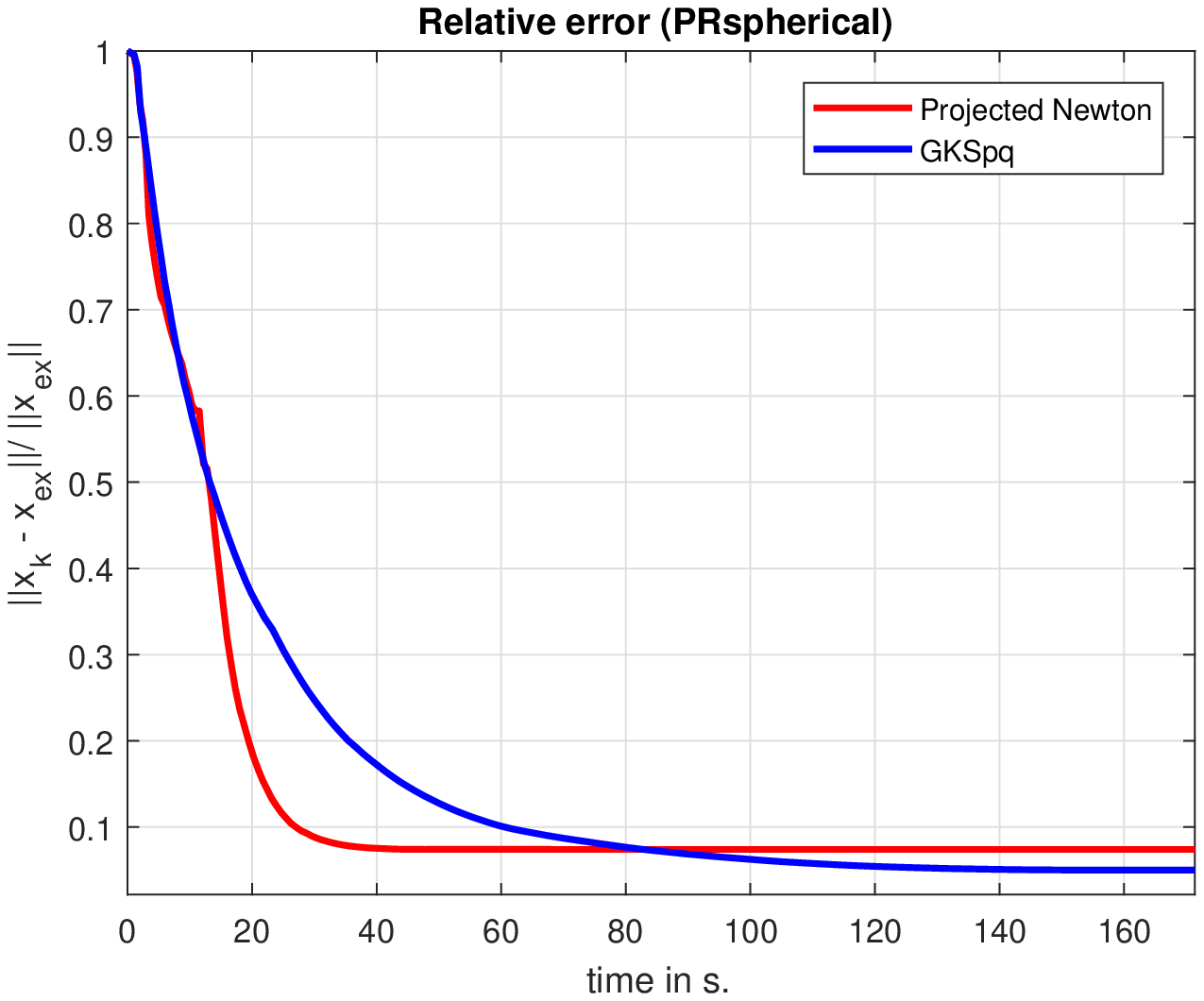} 
\end{tabular}
\caption{\textit{\textbf{Experiment 4.}} Comparison of relative error of the iterates of the Projected Newton method and the GKSpq method (\cref{alg:GKSpq}) in terms of run-time (in seconds) for sparse reconstruction problem, i.e. with $\Psi(x) = \Psi_1(x)$. \label{fig:timingsparse}}
\end{figure}

\paragraph{\textbf{Experiment 4.}} We obtain an exact solution $x_{ex}$ by stacking the columns of a sparse randomly generated image $X \in \mathbb{R}^{512 \times 512}$ with approximately $1\%$ of the pixels set to one, similarly to what we did for experiment 3. For a particular matrix $A$ (generated by one of the four functions described in the previous paragraph) we construct the exact right-hand side $b_{ex}=Ax_{ex}$ and add $10\%$ Gaussian noise. For the image deblurring test-problems \texttt{PRblurshake} and \texttt{PRblurrotation} the matrix has size $262,144 \times 262,144$, while the tomography problems \texttt{PRtomo} and \texttt{PRspherical} have matrices with dimension $ 130,320 \times 262,144$ and $370,688 \times 262,144$ respectively. We solve these test-problems with a sparsity inducing regularization term, i.e. we choose $\Psi(x) = \Psi_1(x)$ for the Projected Newton method and $L=I_n$ for the GKSpq method. For the latter method we take the regularization parameter $\alpha = 1/\lambda_k$, where $\lambda_k$ is the final Lagrange multiplier given by the Projected Newton method.

We let both algorithms run for a fixed number of iterations and compare the relative error of the iterates generated by both the Projected Newton method and GKSpq method as a function of the run-time (in seconds). For the Projected Newton method we take $\lambda_0=10^5$ and $\beta=10^{-5}$. For the GKSpq algorithm we take $\tilde{\tau} = 10^{-4}$ in the definition of the weighting matrix, see \cref{eq:ltilde}, which seems to give the best result. The result of this experiment is given by \cref{fig:timingsparse}. The relative error for the Projected Newton method for these test-problems always stagnates before the relative error of the GKSpq method. The effect is relatively small for \texttt{PRblurshake}, but the Projected Newton method converges much more rapidly than the GKSpq method for the other three test-problems. However, the GKSpq method seems the give slightly more accurate results in terms of the final obtainable relative error. 

\begin{figure}
\begin{center}
\includegraphics[width=1\textwidth]{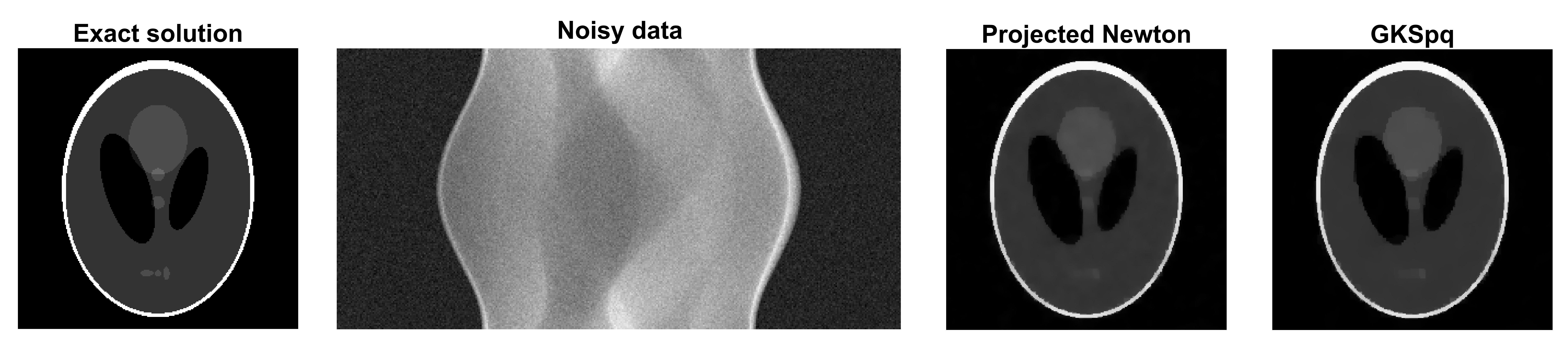} 
\end{center}
\caption{\textit{\textbf{Experiment 5.}} The exact solution, noisy data and reconstructed solution obtained by the Projected Newton method and GKSpq method for the test-problem $\texttt{PRtomo}$ with total variation regularization. \label{fig:solutions_tomo}}
\end{figure}

\paragraph{\textbf{Experiment 5.}} For the next performance experiment we consider a similar set-up as in experiment $4$, but we now use total variation regularization, i.e. we choose $\Psi(x) = \Psi_1(Lx)$ the smooth approximation of $\text{TV}(x)$, see \cref{eq:tv}, with $L$ defined as \cref{eq:ltv}. We take the well-known shepp-logan phantom with $256\times 256$ pixels as exact image and again obtain an exact solution $x_{ex}$ by stacking the columns of the image. We add $10\%$ Gaussian noise to the exact data $b_{ex}=Ax_{ex}$, where the matrix $A$ is again generated using the same four IR tool functions. The regularization matrix $L$ has size $130,560 \times 65,536$, while the matrices $A$ now have dimensions $65,536\times 65,536$ for the deblurring test-problems and dimensions $65,160\times 65,536$ and  $92,672\times 65,536$ for the problems \texttt{PRtomo} and \texttt{PRsperical} respectively. See \cref{fig:solutions_tomo} for an illustration of the exact image $x_{ex}$ and noisy data $b$ for \texttt{PRtomo}. The rest of the experiment is the same as experiment 4. We only change $\tilde{\tau} = 10^{-3}$ since this gives a better result for GKSpq. The result of this experiment is given by \cref{fig:timingtv}. We again see that the relative error for the Projected Newton method stagnates well before the GKSpq method and that both algorithms produce solutions of similar quality. As an example we have shown the reconstructed solution for \texttt{PRtomo} for both methods in \cref{fig:solutions_tomo}. The Projected Newton method and GKSpq produce images of similar quality, but the former method converges much more rapidly. 

\begin{figure}
\centering
\begin{tabular}{cc}
\includegraphics[width=0.49\textwidth]{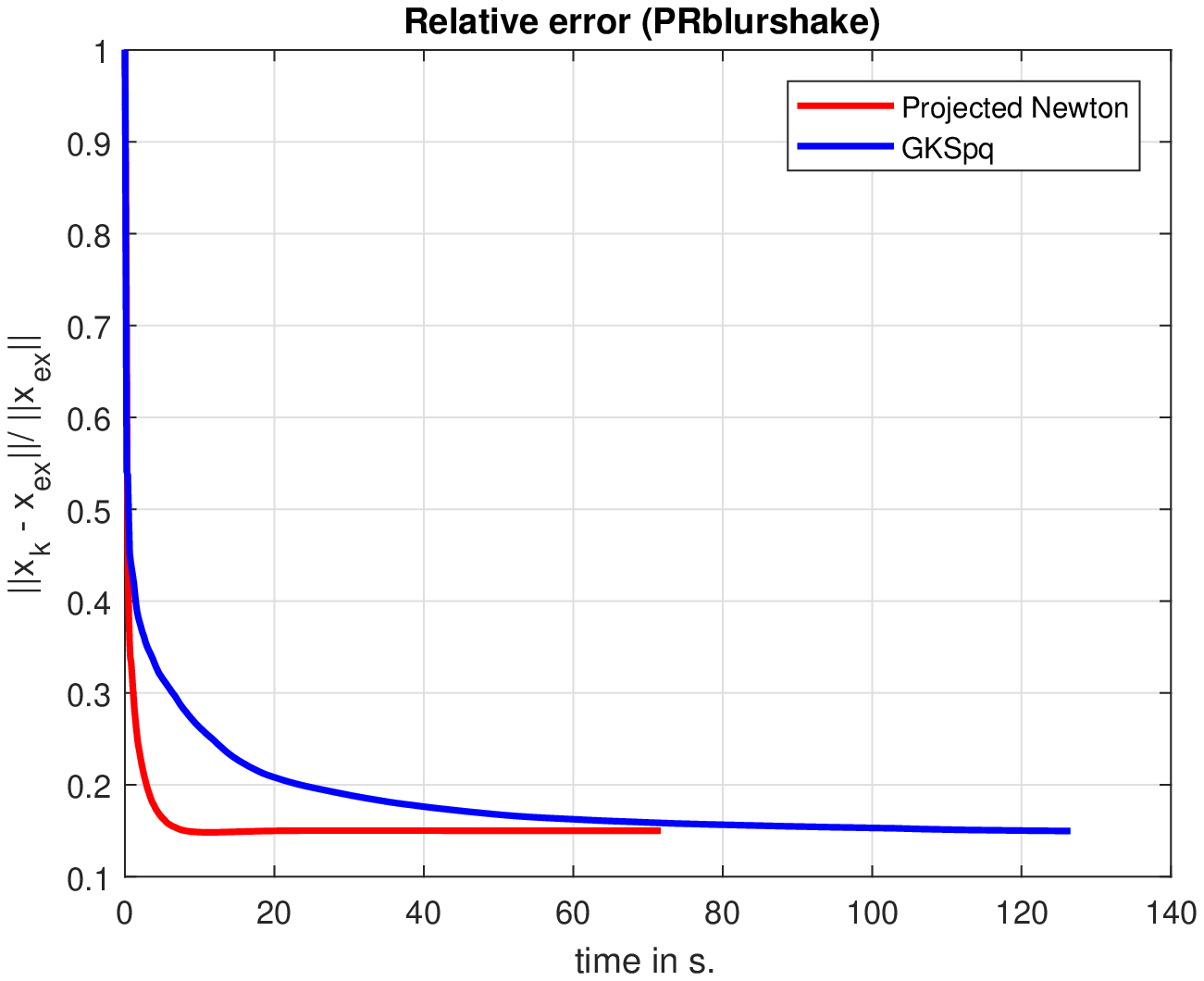} & \includegraphics[width=0.49\textwidth]{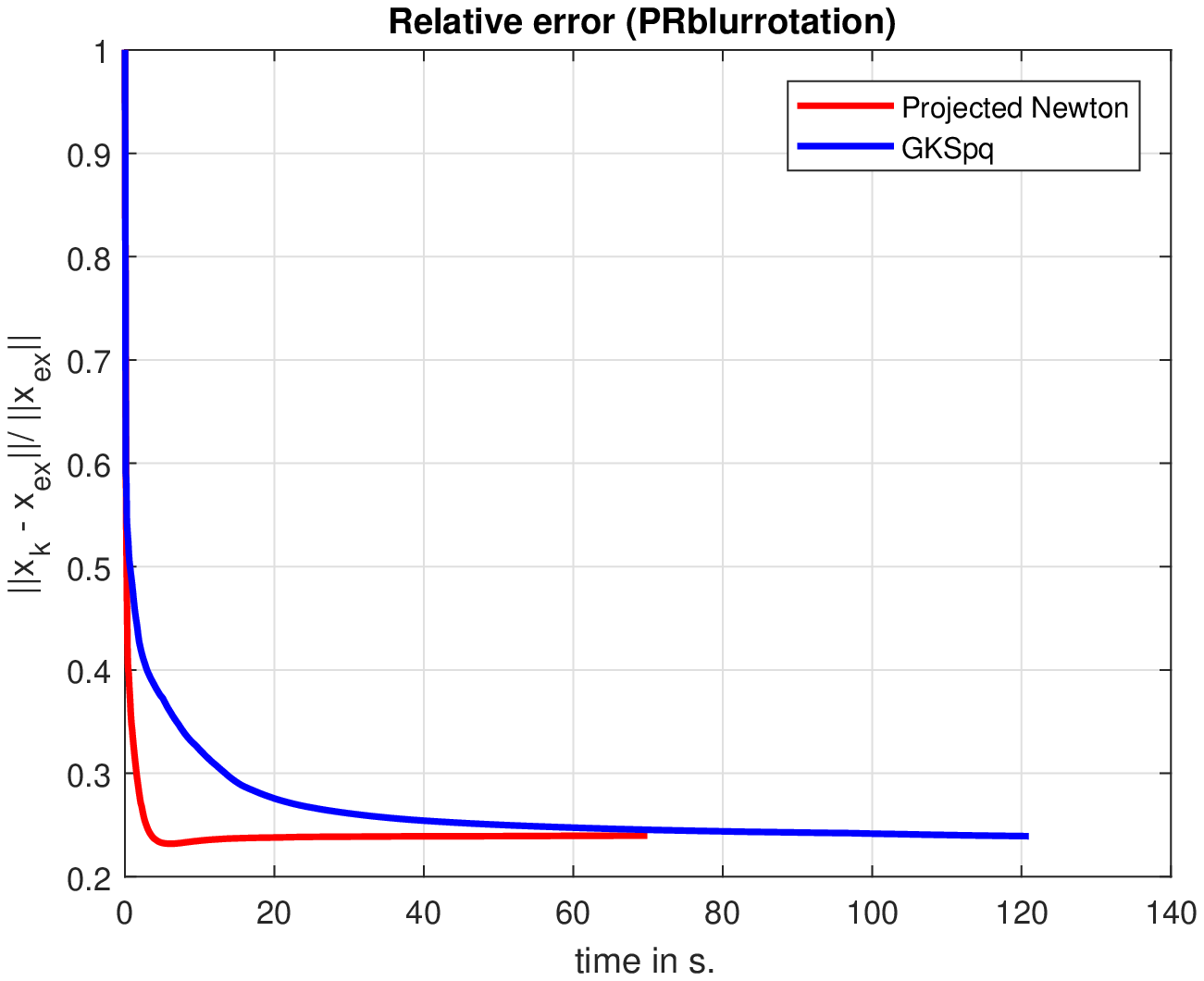} \\
\includegraphics[width=0.49\textwidth]{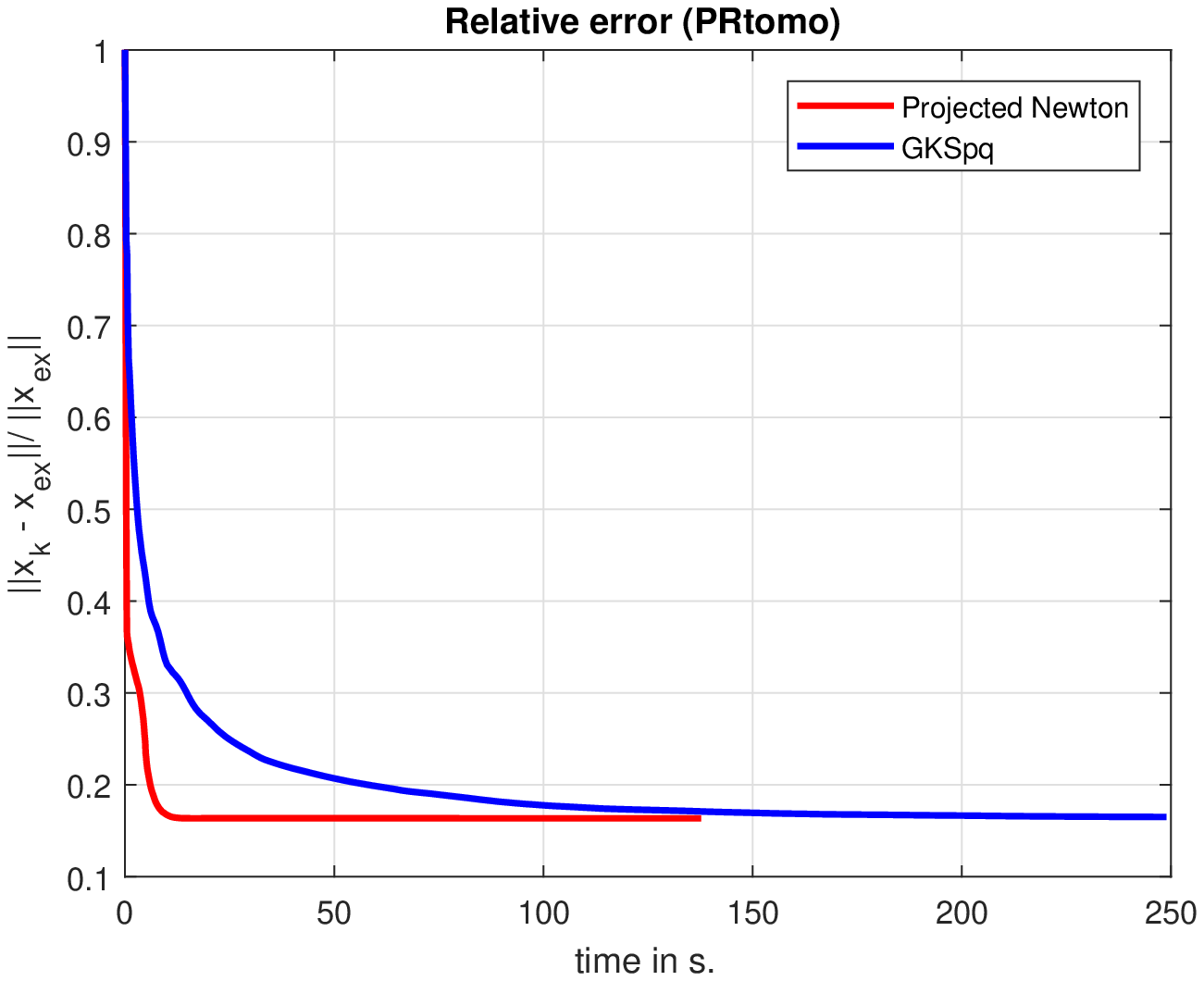} & \includegraphics[width=0.49\textwidth]{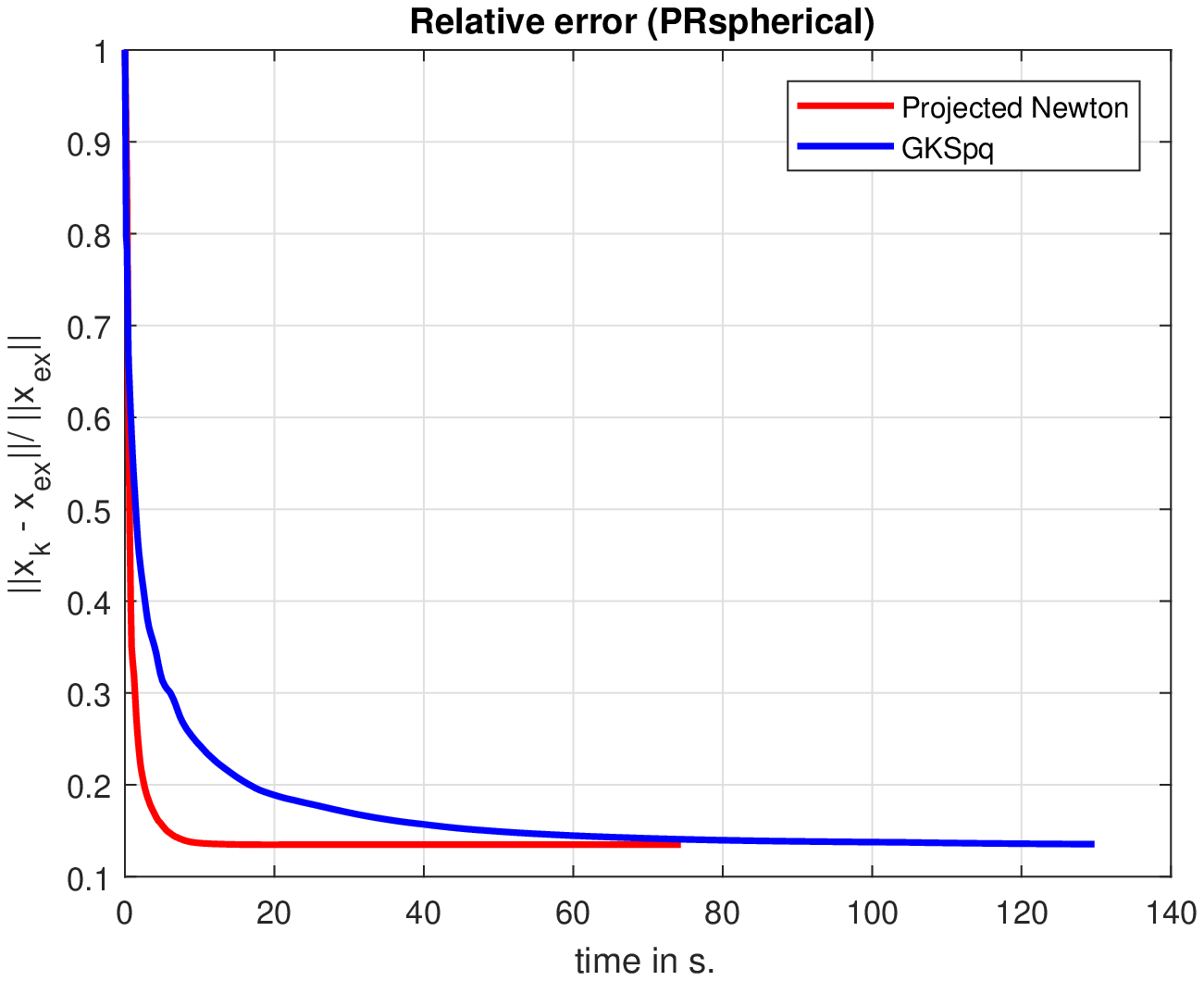} 
\end{tabular}
\caption{\textit{\textbf{Experiment 5.}} Comparison of relative error of the iterates of the Projected Newton method and the GKSpq method \cref{alg:GKSpq} in terms of run-time (in seconds) for $\Psi(x) = \Psi_1(Lx)$ the smooth approximation of anisotropic total-variation function, where $L$ is defined as \cref{eq:ltv}. \label{fig:timingtv}}
\end{figure}

\subsection{Different possible stopping criteria} \label{sec:stopping}

\paragraph{\textbf{Experiment 6.}} For the next numerical experiment we again consider total variation regularization, i.e. we choose $\Psi(x) = \Psi_1(Lx)$ with $L$ given by \cref{eq:ltv}. With this experiment we want to compare different convergence metrics that can possibly be used to formulate a stopping criterion. We already observed in experiment 3 that it is definitely possible that the relative error stabilizes well before $||F(x_k,\lambda_k)||$ is very small.

We again take an example from image deblurring with matrix $A$ generated by the MATLAB function $\texttt{PRblurshake}$ from the IR Tools package and a tomography example with matrix generated by $\texttt{PRtomo}$. For the deblurring test-problem we take the exact image $X\in\mathbb{R}^{256 \times 256}$ as shown in \cref{fig:solutions_tv} and add $20\%$ Gaussian noise to the data. 
For the CT test-problem we take the same exact solution and add $1\%$ Gaussian noise to the data. 

We stop the algorithm when $||F(x_k,\lambda_k)||<10^{-1}$, set $\beta = 10^{-4}$ and show the reconstructed solution for \texttt{PRblurshake} in \cref{fig:solutions_tv} as well as the solution obtained by standard form Tikhonov regularization as a reference. Next, we show for both test-problems the relative error in \cref{fig:combined}, as well as the following four convergence metrics: 
\begin{enumerate}
\item the norm of the nonlinear equation evaluated in the iterates: $||F(x_k,\lambda_k)||$,
\item the relative difference of the regularization parameter: $|\lambda_{k} - \lambda_{k-1}|/|\lambda_{k-1}|$,
\item the relative difference of the solution:  $||x_{k} - x_{k-1}||/||x_{k-1}||$,
\item the mismatch with the discrepancy principle: $||Ax_{k} - b||-\sigma$. 
\end{enumerate}
All these values can be computed during the iterations without a noticeable computational overhead. Note that it follows from \cref{thm:posdis} that the value used in (iv) is positive. The values $||F(x_k,\lambda_k)||$ form a decreasing sequence since we enforce this by the backtracking line search. Hence this convergence metric behaves quite nicely and smoothly. However, the relative error stagnates well before this value becomes very small. Hence, it seems that it is not necessary to find the actual root of $F(x,\lambda)$ to obtain a good solution to the inverse problem. The relative difference of the regularization parameter decreases much more rapidly, but has also a severely non-monotonic behavior and can exhibit large spikes. The relative difference in the solution $x_k$ seems to be a bit more smooth, although also definitely not monotonic. The mismatch with the discrepancy principle is the value that seems to decrease the most rapidly for these test-problems among all the convergence metrics we have shown. Moreover, it is also smoother than the metrics (ii) and (iii).  What this indicates is that (for these particular test-problems) the Projected Newton method converges quickly to a solution $x_k$ that (approximately) satisfies the discrepancy principle, while it takes much longer to actually satisfy the optimality condition \cref{eq:smooth_cond}. However, it seems that it is not that important for the quality of the reconstruction by observing the relative error. Due to these reasons we suggest to use (iv) as the convergence criteria. Lastly, we mention the very rapid convergence of the Projected Newton method in terms of the relative error. For the deblurring test-problem the relative error stabilizes around iteration $k=150 \ll n = 65,536$, while for the CT test-problem it stabilizes around iteration $k = 50 \ll n = 65,536$. Recall that this is very important for performance of the algorithm because of the term $sk^2$ in \cref{eq:flops}.

\begin{figure}
\begin{center}
\includegraphics[width=1\textwidth]{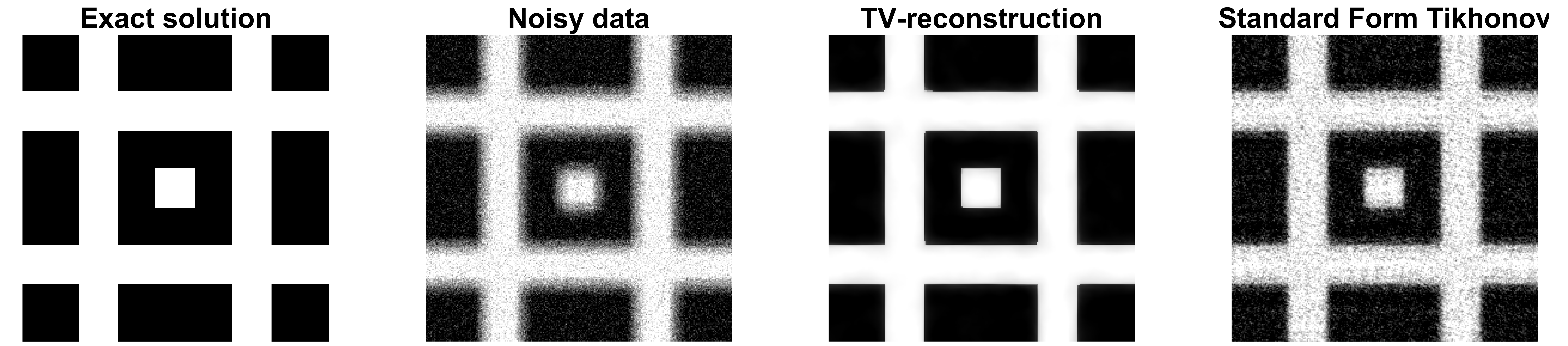} 
\end{center}
\caption{\textit{\textbf{Experiment 6.}} The exact solution, noisy data and reconstructed solution for the image deblurring test-problem $\texttt{PRblurshake}$ as well as the solution obtained by standard form Tikhonov regularization as a reference. \label{fig:solutions_tv}}
\end{figure}

\begin{figure}
\includegraphics[width=1\textwidth]{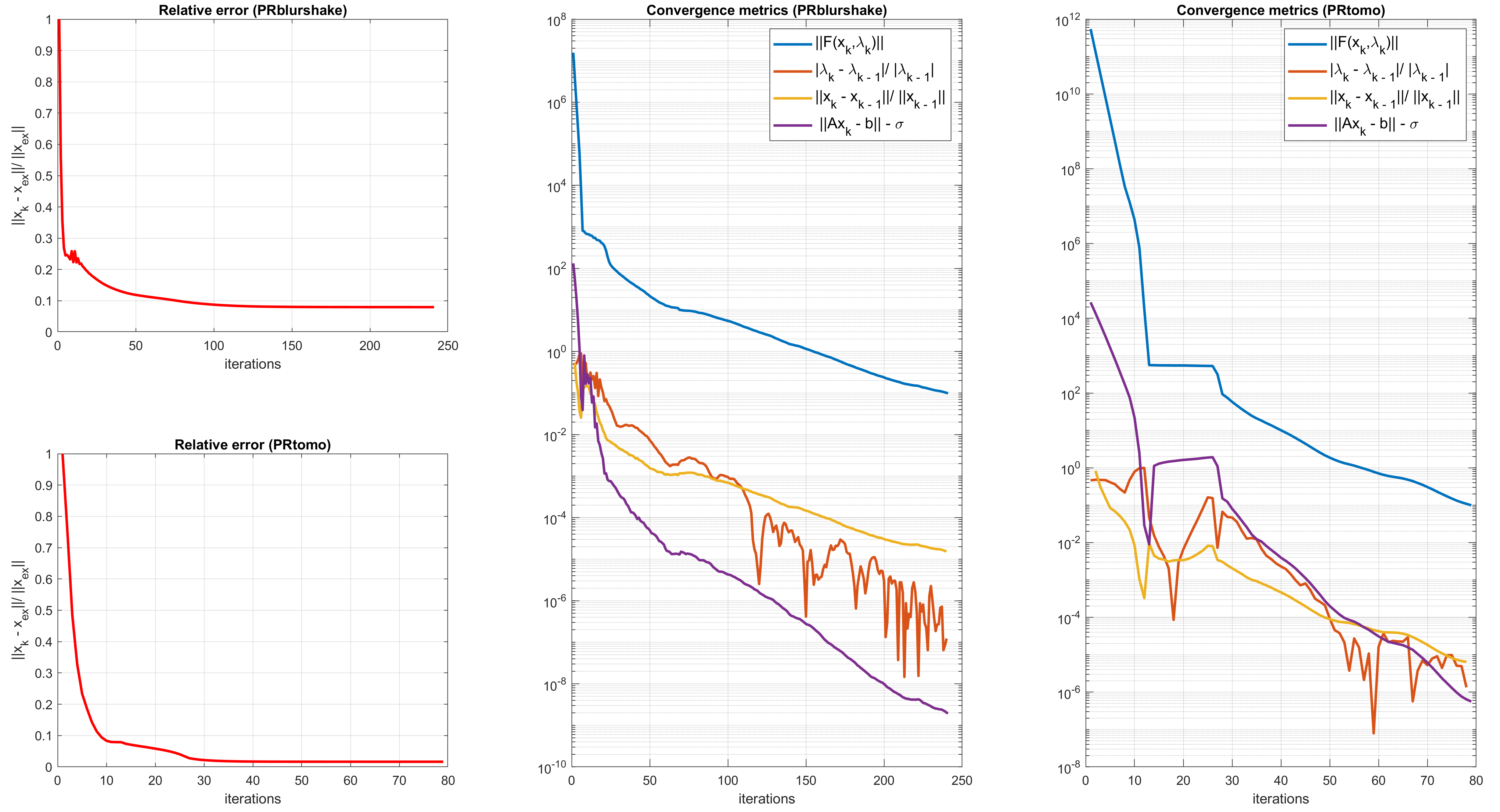} 
\caption{\textit{\textbf{Experiment 6.}} The relative error and other (computable) convergence metrics are shown for the test-problems \texttt{PRblurshake} and \texttt{PRtomo} with $\Psi(x) = \Psi_1(Lx)$ the smooth approximation of anisotropic total-variation function, where $L$ is defined as \cref{eq:ltv}. \label{fig:combined}}
\end{figure}

\paragraph{\textbf{Experiment 7.}} As a final experiment we take a closer look at the different possible convergence criteria and try to answer the question of what threshold can be chosen in practice to obtain an accurate solution, while not performing to many ``unnecessary" iterations. In an ideal world we would terminate the algorithm once the relative error has stagnated. However, the relative error is obviously unobtainable in practice. To study what threshold we can use for the different stopping criteria described above we again consider the four test-problems \texttt{PRblurshake}, \texttt{PRblurrotation}, \texttt{PRtomo} and \texttt{PRspherical} and the exact image of size $256 \times 256$ as illustrated in \cref{fig:solutions_tv}. For all test-problems we consider relative noise-levels $\tilde{\epsilon} = 0.01, 0.03, 0.05$ and $0.1$. For each of these relative noise-levels we run the Projected Newton method 100 times with total variation regularization (using the same parameters as in experiment 6). Note that for each run we generate a new noisy right-hand side $b$. Moreover, for the test-problem \texttt{PRblurshake}, the matrix also changes for each run due to the random nature of the modeled ``shaking'' motion of the camera. The other matrices stay the same over the different runs. We terminate the algorithm once the following condition holds for three consecutive iterations:
\begin{equation}\label{eq:crit}
\frac{|e_k - e_{k-1}|}{e_{k-1}} < 10^{-3}, \hspace{0.5cm}\text{with}\hspace{0.5cm} e_k = \frac{||x_k - x_{ex}||}{||x_{ex}||}.
\end{equation}
This means that the relative error does not change much anymore and that performing additional iterations does not really provide a much better solution. 

In \cref{table} we report the average number of iterations performed until \cref{eq:crit} is satisfied for three consecutive iterations. In addition, we also show the average value obtained by the different stopping criteria in the final iteration $k$ as well as the average relative error. Between parentheses we also report the standard deviation for the different metrics. A few observations can be made. First of all we see that the number of iterations needed to converge is always relatively small compared to size of $A$. Interestingly, we see that the iteration number seems to grow with the relative noise-level. The standard deviation for the number of iterations is also relatively small. As one would expect, we can observe that the relative error in the final iteration also grows with the relative noise-level. All convergence criteria, except $||F(x_k,\lambda_k)||$, are at least smaller than $10^{-3}$. 
Moreover, in most of the cases, we have that the variability over the different runs is modest. 
The mismatch with the discrepancy (i.e. $||Ax_k - b|| - \sigma$) is the value that is the lowest among the different stopping criteria for most cases. However, the experiment does not give a definite answer on what threshold we should use for this convergence criteria. At the moment we argue that a threshold of $10^{-6}$ would nicely balance the accuracy of the solution, while not performing too many ``unnecessary" iterations. The value in the final iteration for the convergence criteria based on the relative change in $\lambda_k$ and $x_k$ seems to be more uniform over the different test-problems. Here, $10^{-4}$ seems to be a good threshold. 

\begin{sidewaystable}
\centering
\begin{tabular}{l | rrrrrrr}
\textbf{Problem} & $\tilde{\epsilon}$ & $\#its$ & $||F(x_k,\lambda_k)||$ & $\frac{|\lambda_k-\lambda_{k-1}|}{\lambda_{k-1}}$ & $\frac{||x_k - x_{k-1}||}{||x_{k-1}||}$ & $||Ax_k-b|| - \eta$  & $e_k$ \\ \hline \hline
\texttt{PRblurshake}    & 0.01  & 57.3  (5.4)    & 2.58 (1.51)  & 2.3e-4 (2.6e-4) & 1.4e-4 (8.0e-5) &  2.7e-6 (3.4e-6) &  1.7e-2 (2.2e-3)  \\
 						            & 0.03  & 69.3  (6.3)    & 2.08 (0.65)  & 1.3e-4 (1.3e-4) & 1.5e-4 (4.0e-5) &  1.1e-6 (1.0e-6) &  3.0e-2 (3.8e-3) \\
						            & 0.05  & 82.8  (7.8)    & 2.02 (0.75)  & 1.0e-4 (2.1e-4) & 1.7e-4 (4.9e-5) &  1.1e-6 (2.0e-6) &  4.0e-2 (4.9e-3) \\
					              & 0.10  & 108.0 (11.3)   & 1.78 (0.87)  & 8.9e-5 (2.5e-4) & 1.9e-4 (5.7e-5) &  9.5e-7 (2.1e-6) &  5.7e-2 (6.7e-3) \\ \hline
\texttt{PRblurrotation} & 0.01  & 131.9 (1.2)    & 2.26 (0.11)  & 3.2e-4 (2.2e-4) & 2.1e-4 (1.0e-5) &  1.0e-6 (1.4e-7) &  4.6e-2 (6.5e-4) \\
 						            & 0.03  & 130.5 (1.7)    & 2.09 (0.17)  & 3.2e-4 (1.1e-4) & 2.3e-4 (1.2e-5) &  4.4e-7 (5.4e-8) &  8.0e-2 (1.4e-3) \\
						            & 0.05  & 140.9 (2.7)    & 2.06 (0.23)  & 3.0e-4 (8.8e-5) & 2.5e-4 (1.4e-5) &  3.8e-7 (5.7e-8) &  1.0e-1 (1.8e-3) \\
					              & 0.10  & 156.6 (4.0)    & 2.10 (0.23)  & 1.2e-4 (7.7e-5) & 3.0e-4 (1.7e-5) &  3.3e-7 (4.6e-8) &  1.4e-1 (3.0e-3) \\ \hline
\texttt{PRtomo}         & 0.01  & 48.5  (0.7)    & 1.95 (0.14)  & 2.5e-4 (7.1e-5) & 1.0e-4 (7.9e-6) &  2.2e-4 (3.5e-5) &  1.7e-2 (2.6e-4) \\
 						            & 0.03  & 59.6  (1.4)    & 2.16 (0.15)  & 1.3e-4 (6.4e-5) & 1.5e-4 (9.9e-6) &  1.5e-4 (2.1e-5) &  3.2e-2 (6.3e-4) \\
						            & 0.05  & 122.0 (3.4)    & 2.18 (0.17)  & 8.0e-5 (5.5e-5) & 1.7e-4 (1.2e-5) &  1.1e-4 (1.4e-5) &  4.3e-2 (1.1e-3) \\
					              & 0.10  & 201.4 (7.5)    & 2.25 (0.30)  & 1.1e-4 (6.4e-5) & 2.2e-4 (2.1e-5) &  9.5e-5 (2.1e-5) &  6.6e-2 (1.9e-3) \\ \hline
\texttt{PRspherical}    & 0.01  & 39.6  (1.0)    & 1.89 (0.22)  & 2.2e-4 (1.3e-4) & 9.9e-5 (1.1e-5) &  1.2e-6 (3.3e-7) &  1.6e-2 (1.8e-4)  \\
 						            & 0.03  & 55.4  (1.4)    & 1.98 (0.14)  & 9.2e-5 (7.0e-5) & 1.3e-4 (8.1e-6) &  7.9e-7 (1.1e-7) &  3.0e-2 (5.1e-4) \\
						            & 0.05  & 69.6  (5.3)    & 1.97 (0.11)  & 5.8e-5 (4.2e-5) & 1.5e-4 (7.5e-6) &  6.5e-7 (7.6e-8) &  4.0e-2 (9.5e-4) \\
					              & 0.10  & 90.5  (2.3)    & 2.07 (0.13)  & 5.6e-5 (3.8e-5) & 1.9e-4 (1.1e-5) &  5.4e-7 (6.2e-8) &  6.1e-2 (1.5e-3) \\ \hline
\end{tabular} 
\\
\caption{\textbf{Experiment 7.} Different convergence metrics of the Projected Newton method applied with total variation regularization to the four test-problems \texttt{PRblurshake}, \texttt{PRblurrotation}, \texttt{PRtomo} and \texttt{PRspherical} with different relative noise-levels $\tilde{\epsilon}$. The Projected Newton method is applied 100 times for each problem and the average value is reported for the number of iterations ($\#its$) needed to satisfy the stopping criteria, for the relative error $e_k$ in the final iteration $k$ and for the different convergence metrics discussed in \cref{sec:stopping}. Between parenthesis we also report the standard deviation over the 100 runs. \label{table}} 
\end{sidewaystable}
\section{Conclusions}\label{sec:concl}
We present a new and efficient Newton-type algorithm to compute (an approximation of) the solution of the $\ell_p$ regularized linear inverse problem with a general, possibly non-square or non-invertible regularization matrix. Simultaneously it determines the corresponding regularization parameter such that the discrepancy principle is satisfied. First, we describe a convex twice continuously differentiable approximation of the $\ell_p$ norm for $p\geq 1$. We then use this approximation to formulate a constrained optimization problem that describes the problem of interest. In iteration $k$ of the algorithm we project this optimization problem on a basis of a Generalized Krylov subspace of dimension $k$. We show that the Newton direction of this projected problem results in a descent direction for the original problem. The step-length is then determined in a backtracking line-search. This approach is efficiently implemented using the reduced QR decomposition of a tall and skinny matrix. Further improvements in the case of general form Tikhonov regularization are presented as well.  

Next, we illustrate the interesting matrix structure of Generalized Krylov subspaces and compare the efficiency of the Projected Newton method with other state-of-the-art approaches in a number of numerical experiments. We show that the Projected Newton method is able to produce a highly accurate solution to an ill-posed linear inverse problem with a modest computational cost. Furthermore, we show that we are successfully able to produce sparse solutions with the approximation of the $\ell_1$ norm and that the approximation of the total variation regularization function also performs well. Lastly, we comment on a number of different convergence criteria that can be used and argue that the mismatch with the discrepancy principle is a suitable candidate. 

\section*{References}
\bibliographystyle{unsrt}
\bibliography{refs_lp}

\begin{thebibliography}{10}

\bibitem{doi:10.1137/1.9780898718836}
Per~Christian Hansen.
\newblock {\em Discrete Inverse Problems}.
\newblock Society for Industrial and Applied Mathematics, 2010.

\bibitem{GAZZOLA2014180}
Silvia Gazzola and Paolo Novati.
\newblock Automatic parameter setting for {A}rnoldi-{T}ikhonov methods.
\newblock {\em Journal of Computational and Applied Mathematics}, 256:180 --
  195, 2014.

\bibitem{Cornelis_2020}
Jeffrey Cornelis, Nick Schenkels, and Wim Vanroose.
\newblock Projected {N}ewton method for noise constrained {T}ikhonov
  regularization.
\newblock {\em Inverse Problems}, 36(5), mar 2020.

\bibitem{LAMPE20122845}
Jorg Lampe, Lothar Reichel, and Heinrich Voss.
\newblock Large-scale {T}ikhonov regularization via reduction by orthogonal
  projection.
\newblock {\em Linear Algebra and its Applications}, 436(8):2845 -- 2865, 2012.

\bibitem{CALVETTI2000423}
D.~Calvetti, S.~Morigi, L.~Reichel, and F.~Sgallari.
\newblock Tikhonov regularization and the {L}-curve for large discrete
  ill-posed problems.
\newblock {\em Journal of Computational and Applied Mathematics}, 123(1):423 --
  446, 2000.
\newblock Numerical Analysis 2000. Vol. III: Linear Algebra.

\bibitem{doi:10.1137/130917673}
Silvia Gazzola and James~G. Nagy.
\newblock Generalized {A}rnoldi--{T}ikhonov method for sparse reconstruction.
\newblock {\em SIAM Journal on Scientific Computing}, 36(2):B225--B247, 2014.

\bibitem{rodriguez2008efficient}
Paul Rodr{\i}guez and Brendt Wohlberg.
\newblock An efficient algorithm for sparse representations with $\ell_p$ data
  fidelity term.
\newblock In {\em Proceedings of 4th IEEE Andean Technical Conference
  (ANDESCON)}, 2008.

\bibitem{10.1137/S1064827596304010}
Scott~Shaobing Chen, David~L. Donoho, and Michael~A. Saunders.
\newblock Atomic decomposition by {B}asis {P}ursuit.
\newblock {\em SIAM J. Sci. Comput.}, 20(1):33–61, December 1998.

\bibitem{doi:10.1137/18M1194456}
Julianne Chung and Silvia Gazzola.
\newblock Flexible {K}rylov methods for $\ell_p$ regularization.
\newblock {\em SIAM Journal on Scientific Computing}, 41(5):S149--S171, 2019.

\bibitem{558475}
I.~F. {Gorodnitsky} and B.~D. {Rao}.
\newblock Sparse signal reconstruction from limited data using {FOCUSS}: a
  re-weighted minimum norm algorithm.
\newblock {\em IEEE Transactions on Signal Processing}, 45(3):600--616, 1997.

\bibitem{chan2006total}
T~Chan, Selim Esedoglu, Frederick Park, and A~Yip.
\newblock Total variation image restoration: Overview and recent developments.
\newblock In {\em Handbook of mathematical models in computer vision}, pages
  17--31. Springer, 2006.

\bibitem{4380459}
B.~{Wohlberg} and P.~{Rodriguez}.
\newblock An {I}teratively {R}eweighted norm algorithm for minimization of
  {T}otal {V}ariation functionals.
\newblock {\em IEEE Signal Processing Letters}, 14(12):948--951, Dec 2007.

\bibitem{nocedal2006numerical}
Jorge Nocedal and Stephen Wright.
\newblock {\em Numerical optimization}.
\newblock Springer Science \& Business Media, 2006.

\bibitem{landi2008lagrange}
Germana Landi.
\newblock The {L}agrange method for the regularization of discrete ill-posed
  problems.
\newblock {\em Computational Optimization and Applications}, 39(3):347--368,
  2008.

\bibitem{doi:10.1137/140967982}
A.~Lanza, S.~Morigi, L.~Reichel, and F.~Sgallari.
\newblock A {G}eneralized {K}rylov {S}ubspace {M}ethod for $\ell_p$-$\ell_q$
  {M}inimization.
\newblock {\em SIAM Journal on Scientific Computing}, 37(5):S30--S50, 2015.

\bibitem{saheya2018numerical}
B~Saheya, Cheng-He Yu, and Jein-Shan Chen.
\newblock Numerical comparisons based on four smoothing functions for absolute
  value equation.
\newblock {\em Journal of Applied Mathematics and Computing}, 56(1-2):131--149,
  2018.

\bibitem{herty2007smoothed}
Michael Herty, Axel Klar, AK~Singh, and Peter Spellucci.
\newblock Smoothed penalty algorithms for optimization of nonlinear models.
\newblock {\em Computational Optimization and Applications}, 37(2):157--176,
  2007.

\bibitem{wu2019signal}
Caiying Wu, Jiaming Zhan, Yue Lu, and Jein-Shan Chen.
\newblock Signal reconstruction by conjugate gradient algorithm based on
  smoothing $\ell_1$-norm.
\newblock {\em Calcolo}, 56(4):42, 2019.

\bibitem{doi:10.1137/0712047}
C.~C. Paige and M.~A. Saunders.
\newblock Solution of sparse indefinite systems of linear equations.
\newblock {\em SIAM Journal on Numerical Analysis}, 12(4):617--629, 1975.

\bibitem{paige1982lsqr}
Christopher~C Paige and Michael~A Saunders.
\newblock {LSQR}: {A}n algorithm for sparse linear equations and sparse least
  squares.
\newblock {\em ACM Transactions on Mathematical Software (TOMS)}, 8(1):43--71,
  1982.

\bibitem{golub1965calculating}
Gene Golub and William Kahan.
\newblock Calculating the singular values and pseudo-inverse of a matrix.
\newblock {\em Journal of the Society for Industrial and Applied Mathematics,
  Series B: Numerical Analysis}, 2(2):205--224, 1965.

\bibitem{boyd2004convex}
Stephen Boyd, Stephen~P Boyd, and Lieven Vandenberghe.
\newblock {\em Convex optimization}.
\newblock Cambridge university press, 2004.

\bibitem{daniel1976reorthogonalization}
James~W Daniel, Walter~Bill Gragg, Linda Kaufman, and Gilbert~W Stewart.
\newblock Reorthogonalization and stable algorithms for updating the
  gram-schmidt {QR} factorization.
\newblock {\em Mathematics of Computation}, 30(136):772--795, 1976.

\bibitem{zhang2006schur}
Fuzhen Zhang.
\newblock {\em The {S}chur complement and its applications}, volume~4.
\newblock Springer Science \& Business Media, 2006.

\bibitem{hansen1994regularization}
Per~Christian Hansen.
\newblock Regularization tools: a {M}atlab package for analysis and solution of
  discrete ill-posed problems.
\newblock {\em Numerical algorithms}, 6(1):1--35, 1994.

\bibitem{gazzola2019ir}
Silvia Gazzola, Per~Christian Hansen, and James~G Nagy.
\newblock {IR} tools: a {MATLAB} package of iterative regularization methods
  and large-scale test problems.
\newblock {\em Numerical Algorithms}, 81(3):773--811, 2019.

\bibitem{8161135}
L.~{Zhang}, W.~{Zhou}, and D.~{Li}.
\newblock A descent modified polak–ribière–polyak conjugate gradient
  method and its global convergence.
\newblock {\em IMA Journal of Numerical Analysis}, 26(4):629--640, 2006.

\end{thebibliography}

\end{document}